\newcommand{\NN}{\mathbb{N}}
\newcommand{\ZZ}{\mathbb{Z}}
\newcommand{\QQ}{\mathbb{Q}}
\newcommand{\RR}{\mathbb{R}}
\newcommand{\CC}{\mathbb{C}}
\newcommand{\FF}{\mathbb{F}}
\newcommand{\cO}{\mathcal{O}}
\newcommand{\cM}{\mathcal{M}}
\newcommand{\cI}{\mathcal{I}}
\newcommand{\bA}{\mathbb{A}}
\newcommand{\lang}{\mathcal{L}}
\newcommand{\Lval}{\lang_{\text{val}}}
\newcommand{\Lring}{\lang_{\text{ring}}}
\newcommand{\LRV}{ \lang^{\RV} }
\newcommand{\Log}{\lang_{\text{og}}}
\newcommand{\Ldp}{\lang_{\text{DP}}}
\newcommand{\THen}{T_{\text{Hen}}}
\newcommand{\LgDP}{ \lang_{\text{gDP}} }
\newcommand{\cC}{\mathcal{C}}
\newcommand{\cK}{\mathcal{K}}
\newcommand{\THeno}{T_{\text{Hen},0}}
\newcommand{\RVprod}{ \RV_{\bar{n}} }
\DeclareMathOperator{\Loc}{Loc}
\DeclareMathOperator{\graph}{graph}
\DeclareMathOperator{\ord}{ord}
\DeclareMathOperator{\dom}{dom}
\DeclareMathOperator{\ac}{ac}
\DeclareMathOperator{\rv}{rv}
\DeclareMathOperator{\Th}{Th}
\DeclareMathOperator{\RF}{RF}
\DeclareMathOperator{\VG}{VG}
\DeclareMathOperator{\VF}{VF}
\DeclareMathOperator{\ind}{ind}
\DeclareMathOperator{\lct}{lct}
\DeclareMathOperator{\Gal}{Gal}
\DeclareMathOperator{\RV}{RV}
\DeclarePairedDelimiter{\abs}{\lvert}{\rvert}
\newtheorem{theorem}{Theorem}[section]
\newtheorem{lemma}[theorem]{Lemma}
\newtheorem{proposition}[theorem]{Proposition}
\newtheorem{corollary}[theorem]{Corollary}
\theoremstyle{definition}
\newtheorem{definition}[theorem]{Definition}
\theoremstyle{remark}
\newtheorem{remark}[theorem]{Remark}
\newtheorem{example}[theorem]{Example}
\newtheorem{notation}[theorem]{Notation}
\title[Existential $p$-adic integration and descent]{Existential uniform $p$-adic integration and descent for integrability and largest poles}
\author%[Cluckers]
{Raf Cluckers}
\address{Univ.~Lille,
CNRS, UMR 8524 - Laboratoire Paul Painlevé, F-59000 Lille, France, and
KU Leuven, Department of Mathematics, B-3001 Leu\-ven, Bel\-gium}
\email{Raf.Cluckers@univ-lille.fr}
\urladdr{http://rcluckers.perso.math.cnrs.fr/}
\author%[Stout]
{Mathias Stout}
\address{KU Leuven, Department of Mathematics, B-3001 Leu\-ven, Bel\-gium}
\email{mathias.stout@kuleuven.be}
\subjclass[2020]{Primary 03C98, 11U09, 14B05; Secondary 11S40,  14E18, 11F23}
\keywords{Descent for integrability of $p$-adic integrals, cell decomposition, Igusa's local zeta functions, Serre-Poincar\'e series, log canonical threshold, oscillation index, semi-continuity of $p$-adic integrability indices}
\thanks{The authors would like to thank A.~Aizenbud, Y.~Hendel, F.~Loeser, and E.~Sayag for valuable discussions around the themes of this paper. Many thanks also to J.~Denef, for discussions on controlling quantifiers in relation to motivic and $p$-adic integrals. The author R.~C.\  was partially supported by the Labex CEMPI  (ANR-11-LABX-0007-01).
%The author M.~S.\ was supported by KU Leuven.
}
\begin{document}
\begin{abstract}
Since the work by Denef, $p$-adic cell decomposition provides a well-established method to study $p$-adic and motivic integrals.
In this paper, we present a %precise
variant of this method that keeps track of existential quantifiers.
%For a certain class of definable functions, including non-negative powers of polynomials maps, we show the following: if $f$ is integrable with respect to the Haar measure on a $p$-adic field $L$, then its restriction to any $p$-adic subfield $K$ is also integrable (with respect to the corresponding Haar measure).
This enables us to deduce descent properties for $p$-adic integrals. In particular, we show that integrability for `existential' functions descends from any  $p$-adic field to any $p$-adic subfield.
As an application, we obtain that the largest pole of the Serre-Poincar\'e series can only increase when passing to field extensions.
As a side result, we prove a relative quantifier elimination statement for Henselian valued fields of characteristic zero that preserves existential formulas.

\end{abstract}

\maketitle

%\tableofcontents

\section{Introduction} \label{sec:intro}

%Let us start with some questions.

This paper centers around the following question. Given two $p$-adic fields  $L\geq K$, both equipped with the Haar measure, and two `similar' functions $f_L$ on $L^n$ and $f_K$ on $K^n$, in what generality can one deduce the integrability of $f_K$ from the integrability of $f_L$? This turns out to be a delicate matter, with the geometric hope that in $L$ one sees `more' than in $K$. In this paper we make precise what `similar' and `more' may mean for this question about descent %of integrability
from $L$ to $K$.

%Classically,
In the context of uniform $p$-adic %and motivic
integration, one often describes $f_K$ uniformly over all $p$-adic fields $K$ by using model theoretic tools, see e.g.~\cite{Pas89,CH18}. This  leads to uniform descriptions of integrands, their integrability, their integrals, loci of integrability, etc., when varying over all $K$, but the mentioned descent for integrability is more subtle than just having these uniform descriptions. We will use two levels of extra control %on the descriptions of the $f_K$
that can both be seen as forms of positivity. The first is a non-negativity notion coming simply from avoiding differences, which is reminiscent of the semi-ring approach to motivic functions from \cite{CL08}.
%, more precisely, we will use the formally non-negative functions  $f_K$ from \cite{CGH21}.
The second form of positivity comes from allowing only existential formulas in the descriptions of the $f_K$. Note that any existential formula $\varphi$ %in any language
has the following form of positivity: for any substructures $A\subseteq B$, one has an inclusion of the solution sets $\varphi(A)\subseteq \varphi(B)$, see %Property
(\ref{eq:exist}). %, hence, counting in finite structures can only grow when the structure becomes larger.
%Let us explain our result
The study of existential $p$-adic integrals and their descent properties are new.
Let us make this all precise.

\subsection{}
We first introduce our results on descent in the concrete setting of point counting in finite rings as in Igusa's work, see e.g.~\cite{Den91}.
Let $f(x) \in \ZZ[x]$ be a non-constant polynomial in $m$ variables.
If we wish to understand the number of solutions of $f(x) = 0 \mod N$ for all $N>0$, it suffices by the Chinese remainder theorem to understand the numbers
\[  \tilde{N}_{n,p}(f) \coloneqq \# \{ \xi \in (\ZZ_p/(p^n))^m \mid f(\xi) = 0 \}, \]
for primes $p$ and  integers $n\ge 0$.
One way of studying these numbers is by investigating the corresponding \emph{Poincar\'e series}
\[ \tilde{P}_{f,p}(T) = \sum_{n = 0}^{+ \infty} \frac{ \tilde{N}_{n,p}(f) } { p^{nm} } T^n.  \]
Up to a transformation $T \mapsto p^{-s}$, we may view $\tilde{P}_{f,p}(p^{-s})$ as a function of one complex variable $s$ with positive real part.
Igusa showed in \cite{Igu75} that this function can be expressed in terms of a $p$-adic integral and that $\tilde{P}_{f,p}(p^{-s})$ is a rational function in $p^{-s}$, with a candidate description of all the poles based on a log resolution of $f = 0$. The maximum of the real parts of the complex poles of $\tilde{P}_{f,p}(p^{-s})$ is of particular interest, as it relates to the asymptotic growth of the quantities $\tilde{N}_{n,p}(f)$, for $n \to \infty$.

%There is , and, the maximum of the real parts of the poles of $\tilde{P}_{f,p}(p^{-s})$

Now consider a finite extension $K$ of $\QQ_p$ with valuation ring $\cO_K$ and choose a uniformizer $\pi \in K$. Then we can similarly count the number of solutions $\tilde{N}_{n,K}(f)$ of $f(x) = 0$ in $\cO_{K}^m/(\pi^n)$, where $(\pi^n)$ stands for the ideal generated by $\pi^n$ in $\cO_{K}^m$ . Denote by $q$ the cardinality of the residue field of $K$ and consider the corresponding Poincar\'e series
\[ \tilde{P}_{f,K}(T) = \sum_{n = 0}^{+ \infty} \frac{ \tilde{N}_{n,K}(f) }{ q^{nm} } T^n. \]
Again, $\tilde{P}_{f,K}(q^{-s})$ is rational in $q^{-s}$, and, from the  candidate description of poles based on log resolutions, it follows that for the largest real part of the poles of $\tilde{P}_{f,K}(q^{-s})$, denoted by $- \tilde{\lambda}_K(f)$, one has
\[  \lct(f) \leq \tilde{\lambda}_K(f), \]
where the left-hand side is the log canonical threshold of $f(x)$; here, by convention, if $\tilde{P}_{f,K}( q^{-s} )$ has no poles, then $\tilde{\lambda}_K(f) = + \infty$.

One corollary of our results is that when $L$ is any finite field extension of $K$, then
\begin{equation}\label{eq:lambda}
 \tilde{\lambda}_{L}(f) \leq \tilde{\lambda}_{K}(f)  .
 \end{equation}
We refer to this phenomenon as descent: information for $\tilde{\lambda}_K(f)$ descends from field extensions.
It is conceivable that the above inequality (\ref{eq:lambda}) can also be proven by purely algebraic arguments, but the main results of this paper deduce such descent properties in contexts where algebro-geometric tools are not readily available, as is already the case for Serre's variant which we explain next.

\subsection{}
Let us explain the extent of our results with Serre's variant of the above Poincaré series.
Continuing with our notation for $f,K,\pi,n$, consider the numbers
\[ N_{n,K}(f) \coloneqq \# \{ \xi \in \cO_K^m/(\pi^n)\mid \exists y \in \cO_{K}^m ( f(y) = 0 \land \xi = y + (\pi^n) ) \} \]
and the corresponding \emph{ Serre-Poincar\'e} series
\[ P_{f,K}(T) \coloneqq \sum_{n = 0}^{+\infty} \frac{N_{n,K}(f)}{q^{n(m+1)}}  T^n .\]
Denef \cite{Den84} proved that $P_{f,K}(q^{-s})$ is rational in $q^{-s}$ using model-theoretic ingredients. The first step of the proof consists again of expressing $P_{f,K}(q^{-s})$ as a certain $p$-adic integral. The main complication compared to the case of the usual Poincar\'e series is that the data from which this integral is built is no longer purely algebraic, rather, it is definable (in e.g.~the Denef-pas language), and more precisely, existentially definable (see Definition \ref{def:existent}).

By taking the model-theoretic viewpoint, we can treat the Poincar\'e and Serre-Poincar\'e series on the same footing, leading to similar descent results, as follows. % for the Poincar\'e series also apply to the .
%Let us sketch our results and approach.
Write $-\lambda_{K}(f)$ for the largest real part of the poles (in $s$) of $P_{f,K}(q^{-s})$ (and put $\lambda_K(f) = + \infty$ if there are no poles). We prove the following:
\begin{theorem}(Descent for the Serre-Poincar\'e series) \label{th:intro-Serre-Poincare}
If $K$ is a $p$-adic field, then for any finite field extension $L$, it holds that
\[ \lambda_{L}(f) \leq \lambda_{K}(f). \]
\end{theorem}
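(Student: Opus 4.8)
The plan is to reduce Theorem~\ref{th:intro-Serre-Poincare} to a descent statement about integrability of existentially definable functions, which is the technical heart of the paper. First I would recall Denef's reformulation: the series $P_{f,K}(q^{-s})$ can be written, up to elementary factors in $q^{-s}$, as a $p$-adic integral
\[
 I_K(s) \;=\; \int_{\cO_K^m} \abs{g_K(x)}^{s}\, \abs{h_K(x)}\, \dif x,
\]
or more precisely as an integral over a definable set of an existentially definable integrand built from $f$, where the crucial point (already emphasized in the excerpt) is that the data defining this integral is \emph{existentially} definable in the Denef--Pas language, uniformly in $K$. The largest real part of the poles of $P_{f,K}(q^{-s})$ is then governed by the set of real parameters $s$ for which the corresponding family of integrals converges, i.e.\ by the $p$-adic integrability locus of the associated existential function; concretely, $-\lambda_K(f)$ is the infimum of the (finitely many, rational) candidate pole lines that are actually attained, and $\lambda_K(f)$ itself is the supremum of the $s$ for which integrability holds (with $\lambda_K(f)=+\infty$ when the integral is a polynomial, i.e.\ integrable for all $s$).

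Next I would invoke the main descent result for integrability of existential functions announced in the introduction: integrability of an existentially definable function descends from any $p$-adic field $L$ to any $p$-adic subfield $K$. Applying this to the family parametrized by $s$ (or, after clearing the real parameter, to a single existential function on a product space $\cO^m \times (\text{value group})$), one gets: if $I_L(s_0)$ converges then $I_K(s_0)$ converges, for every rational $s_0$ in the relevant range. Since the set of $s$ for which $I_K(s)$ converges is of the form $(\text{some bound},+\infty)$ determined by finitely many rational candidate poles coming from a fixed log resolution (independent of $K$), and likewise for $L$, the implication ``$I_L(s_0)$ convergent $\Rightarrow$ $I_K(s_0)$ convergent'' for all such $s_0$ forces
\[
 \{ s : I_L(s) \text{ converges}\} \;\subseteq\; \{ s : I_K(s) \text{ converges}\},
\]
which translates exactly into $\lambda_L(f)\le \lambda_K(f)$, including the boundary case where $\lambda_L(f)=+\infty$.

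A few points need care. One must check that passing from the combinatorial quantity $N_{n,K}(f)$ to a $p$-adic integral really produces an \emph{existential} integrand uniformly in $K$ — this is where the defining condition $\exists y\,(f(y)=0 \wedge x \equiv y \bmod \pi^n)$ gets absorbed, and one should make sure no universal quantifiers are introduced when rewriting the Serre count as an integral over $\cO_K^{m+1}$ or a suitable definable subset thereof; Denef's argument does this, but one has to verify that the existential quantifier structure is preserved, which is precisely what the paper's existential cell decomposition / relative quantifier elimination is designed for. One must also argue that ``largest real part of a pole'' is captured by the integrability locus: since $P_{f,K}(q^{-s})$ is rational with denominator a product of factors $1-q^{a-bs}$, non-integrability at $s_0$ is equivalent to $s_0$ being at most the largest real part of a genuine pole, so the integrability locus is the open half-line $(-\lambda_K(f),+\infty)$ (or all of $\RR$). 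The main obstacle I anticipate is not this bookkeeping but ensuring the descent-for-integrability input applies to a \emph{family} depending on the extra real/value-group parameter $s$ uniformly; I would handle this by absorbing $s$ into the integration space (integrating over an extra value-group variable $t$ with the integrand $\abs{g_K(x)}\cdot \indicator[\ord g_K(x) = t]$ etc.), so that a single existential function on the enlarged space encodes the whole family and the descent theorem applies verbatim.
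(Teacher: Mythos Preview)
Your overall strategy is correct and matches the paper's: express $P_{f,K}(q^{-s})$ as a $p$-adic integral of an existentially definable function, identify $\lambda_K(f)$ with a $K$-index, and then apply the descent theorem for the $K$-index (Theorem~\ref{th:K-ind-global}).

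A few of your anticipated difficulties do not arise in the paper's execution, and it is worth noting why. First, the integral representation is simpler than you sketch: following Denef one writes
\[
P_{f,K}(q_K^{-s}) \;=\; \frac{q_K}{q_K-1}\int_{X_K}\abs{w}_K^{\,s}\,dx\,dw,
\]
where $X\subseteq \VF^{m+1}$ is the set of $(x,w)$ with $\ord w\ge 0$ and $\exists y\,(f(y)=0\wedge \min_i\ord(x_i-y_i)\ge \ord w\ge 0)$. This $X$ is \emph{manifestly} existential by inspection; no cell decomposition or quantifier elimination is needed to verify that. Second, there is no need to ``absorb $s$ into the integration space'': the paper's descent result is stated directly for the $K$-index $\ind^K_X(\abs{f})=\sup\{s>0:\int_X\abs{f}^s<\infty\}$, so one simply observes $\lambda_K(f)=\ind^K_X(\abs{w^{-1}})$ and applies Theorem~\ref{th:K-ind-global} once. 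Third, the identification of $-\lambda_K(f)$ with the boundary of the integrability half-line uses only rationality of $P_{f,K}$ (from \cite{Den84,CH18}), not a log resolution; the radius of convergence of the power series in $T=q^{-s}$ is $q^{\lambda_K(f)}$, and that is all one needs.
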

As in the case of the Poincar\'e series, this yields a comparison between the asymptotics of the $N_{n,L}(f)$ and the $N_{n,K}(f)$. As one always has $\lambda_{K}(f)\ge 0$, this theorem opens the way to study properties of limit values of $\lambda_{K}(f)$ when $K$ becomes a bigger and bigger extension. % of some $\QQ_p$.

\subsection{}
Our applications to the poles of the Poincar\'e and Serre-Poincar\'e series are two special cases of a general result which we call descent for the $K$-index. We now explain this in more detail.

Using model theory, one can interpret functions uniformly in field extensions of $K$. More precisely, we consider families of functions $f =(f_K)_K$ where $K$ runs over all $p$-adic fields, such that there exists a single formula (in the sense of logic) of which each $f_K$ is the interpretation in $K$. For such $f = (f_K \colon X_K\subseteq K^m \to K)_K$, we define, for each $p$-adic field $K$,
%\Raf{Needed?: where $X_K$ is bounded,}
the $K$-index of $\abs{f}$ as
\[ \ind^K_X(\abs{f}) \coloneqq \sup \left\{ s \in \RR_{> 0} \, \middle| \,  \int_{X_K} \abs{f_K}_K^s \, dx < + \infty \right\}, \]
with the convention that $\ind^K_X(\abs{f}) = 0$ when the set of such $s$ is empty and $\ind^K_X(\abs{f}) = + \infty$ when it is unbounded.

Here $\abs{\cdot}_K$ is the absolute value associated to the $\pi$-adic valuation (with $\abs{\pi}_K = q^{-1}$) and the integral is taken with respect to the (additive) Haar measure $\mu$, normalized such that $\mu(\cO_K) = 1$.
%Note that if $f_K(x) = \frac{1}{p(x)}$ almost everywhere for each $K$ and some fixed polynomial $p(x) \in \ZZ[x]$ with $p(0) = 0$ and each $X_K = \cO_{K}$, then $\ind^K_X(\abs{f})$ coincides with the $K$-log canonical threshold $\lambda_K((p))$ as considered in \cite[Thm.~2.7]{VZ07}.

Descent for the $K$-index of $\abs{f}$ may fail for trivial reasons, since one may easily define functions which become ``less singular'' in field extensions (Example \ref{ex:descent-failure}). However, it does hold when the graph of $f$ is existentially definable.
\begin{theorem}(Descent for the $K$-index) \label{th:intro-descent}
Let $f = (f_K \colon X_K \to K)_K$ be a definable function in the language $\Lval = \Lring \cup \{\cO\}$. If the graph of $f$ is given by an existential formula\footnote{An existential formula is a formula of the form $\exists y \varphi(x,y)$ for some quantifier-free formula $\varphi(x,y)$ and tuples of variables $x$, $y$, see Definition \ref{def:existent}}, then for any $p$-adic field $K$ and any finite field extension $L \geq K$, we have
\[ \ind^L_X(\abs{f}) \leq \ind^K_X(\abs{f}) .\]
\end{theorem}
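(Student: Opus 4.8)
The plan is to reduce Theorem~\ref{th:intro-descent} to a statement about existential formulas in the Denef--Pas language and then exploit a cell-decomposition argument that is sensitive to quantifiers. First I would recall that integrability of $|f_K|^s$ over $X_K$ with respect to the Haar measure can be detected by an integral of the shape $\int_{X_K} |f_K|^s\,dx$, and, via the standard change-of-variables and the fact that $|f_K|$ is built from a single definable (indeed existentially definable) datum, this integral converges if and only if a certain $\Lval$-definable condition on $s$ holds. The key point is that if the graph of $f$ is existentially definable in $\Lval$, then so is the locus $X_K$ and, crucially, the auxiliary data produced by a \emph{quantifier-tracking} cell decomposition of $f$ remains existential. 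This is exactly where the side result advertised in the abstract (relative quantifier elimination for Henselian valued fields preserving existential formulas) and the ``existential cell decomposition'' of the paper enter.

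Concretely, after an existential cell decomposition one writes $X_K$ as a finite union of cells, on each of which $|f_K|$ is (up to a unit, which does not affect the valuation) a monomial $|\pi|^{\alpha(\text{center, ball parameters})}$ in the cell parameters, and the measure decomposes into a product over the coordinate directions. Convergence of $\int |f_K|^s$ then becomes convergence of an explicit geometric-type sum $\sum q^{-\beta - s\alpha}$ over a definable (and existentially definable) set of exponent tuples, which converges precisely when all the relevant linear forms $\beta + s\alpha$ are bounded below by positive multiples --- equivalently, $s$ lies below a threshold determined by the slopes $\beta/\alpha$ appearing. So $\ind^K_X(|f|)$ equals the infimum of a finite collection of ratios $\beta/\alpha$ indexed by the cells, and the cells, together with their exponent data, are produced uniformly (by a single existential formula) over all $p$-adic fields $K$.

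The heart of the argument --- and the step I expect to be the main obstacle --- is the descent inequality for this cell data: for a finite extension $L\ge K$, every cell (with its exponent pair $(\alpha,\beta)$) that is nonempty over $K$ is also nonempty over $L$, because an existential formula satisfied in the substructure $K$ is satisfied in any extension structure $L$ (the positivity property $\varphi(A)\subseteq\varphi(B)$ recalled in the introduction, specialized to $A=K\hookrightarrow B=L$). Hence the finite set of ratios $\{\beta/\alpha\}$ governing convergence over $K$ is a \emph{subset} of the one governing convergence over $L$ (the latter may acquire extra cells, which can only lower the infimum), so the threshold over $L$ is at most the threshold over $K$. One must be careful that the residue-field and value-group factors contributed by each cell are genuinely positive (so that a nonempty cell really does contribute a pole at the predicted location, i.e. that there is no cancellation --- here the non-negativity/semi-ring positivity discussed in the introduction is what rules out the ``less singular in extensions'' pathology of Example~\ref{ex:descent-failure}); and one must check that the change of variables and the uniformizer-dependence of $|\cdot|_K$ versus $|\cdot|_L$ are handled correctly, e.g. by working with the residue field cardinality $q$ as a formal parameter.

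Finally, I would assemble the pieces: existential cell decomposition gives $\ind^K_X(|f|) = \inf_{c\in\mathcal{C}_K} \beta_c/\alpha_c$ over the nonempty cells $\mathcal{C}_K$; existentiality gives $\mathcal{C}_K\subseteq\mathcal{C}_L$ as index sets of nonempty cells; positivity of the cell contributions guarantees that each nonempty cell actually forces the claimed bound on $s$; therefore $\inf_{c\in\mathcal{C}_L}\beta_c/\alpha_c \le \inf_{c\in\mathcal{C}_K}\beta_c/\alpha_c$, which is exactly $\ind^L_X(|f|)\le \ind^K_X(|f|)$. Theorems~\ref{th:intro-Serre-Poincare} and the inequality~\eqref{eq:lambda} then follow by exhibiting the (Serre-)Poincar\'e series as such an integral with existentially definable integrand, as in Denef's work.
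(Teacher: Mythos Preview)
Your overall strategy matches the paper's: reduce the integral, via existential cell decomposition, to a sum over an existentially definable parameter set, and then use the monotonicity $\varphi(K)\subseteq\varphi(L)$ of existential formulas to compare the thresholds over $K$ and $L$. Two points, however, need correction.

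First, and most importantly, you propose to work in the Denef--Pas language. This is precisely what the paper must \emph{avoid}. Angular component maps need not restrict from $L$ to a subfield $K$ (Proposition~\ref{prop:ac-restriction-failure}), so $K$ is in general not an $\LgDP$-substructure of $L$, and the inclusion $\varphi(K)\subseteq\varphi(L)$ for existential $\varphi$ --- the step you call ``the heart of the argument'' --- simply fails in that language. The paper instead performs the existential cell decomposition in the leading-term language $\LRV$, where Proposition~\ref{le:L-sub} guarantees that any $\Lval$-extension $K\leq L$ is already an $\LRV$-extension. This is not a cosmetic choice; it is what makes the descent go through, and it is why the paper devotes all of Section~3 to the $\RV$-versus-$\ac$ comparison.

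Second, your description of $\ind_X^K(|f|)$ as ``the infimum of a finite collection of ratios $\beta_c/\alpha_c$ indexed by the cells'', together with the conclusion via $\mathcal{C}_K\subseteq\mathcal{C}_L$, oversimplifies what actually happens. After Lemma~\ref{le:int} the integral becomes a sum of $q_K^{-e_K(\cdots)}$ over an existentially definable set $D_K\subseteq\RVprod$, but the summands themselves change when passing from $K$ to $L$ (different $q$, different $e$), so you cannot compare the two sums term by term via positivity. The paper instead computes the index through the radius-of-convergence formula (equations (\ref{eq:limsup_D})--(\ref{eq:claim})) and uses the polynomial bound of Lemma~\ref{le:poly-boundRV} to show that only the quantity $m_{D,K}(\xi_0)=\min\{\ord(\zeta_1\cdots\zeta_m):\zeta\in D_K(\xi_0)\}$ matters. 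The descent step is then the pointwise inequality $m_{D,K}(\xi_0)\geq m_{D,L}(\xi_0)$, which follows from $D_K(\xi_0)\subseteq D_L(\xi_0)$ and feeds directly into the $\limsup$. No finite list of cell-indexed slopes is ever isolated.
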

Our results for the (Serre)-Poincar\'e series follow by identifying their largest poles with $-\ind^K_X(\abs{f})$, for some suitable $f$.

\subsection{}
Our main technical tool consists of a cell decomposition statement (Theorem \ref{th:CDIII}), with precise control on existential quantifiers.

The idea of using cell decomposition to study $p$-adic integrals has a long and successful history. It was first introduced by Denef in \cite{Den84} and then further developed by Pas in \cite{Pas89,Pas90}. Since a cell decomposition gives a nice description of a given set, uniformly in a certain class of fields, it is well suited to the development of theories of uniform $p$-adic integration over either local fields of sufficiently large residue field  characteristic (e.g. \cite{CGH14,CGH18}) or $p$-adic fields of any residue field characteristic (\cite{CH18}), both following the more abstract motivic approach of \cite{CL08,CL10}.

We are interested in comparing the (integrals of) interpretations $f_K$ and $f_L$ of a single $f = (f_K)_K$ between $p$-adic field $K$ and a finite field extension $L \geq K$. This leads to two key technical differences compared to the usual cell decomposition powering the aforementioned frameworks for uniform $p$-adic integration. First, we work in a language with leading term maps, rather than angular component maps. This is because leading term maps always extend to field extensions (Proposition \ref{le:L-sub}), but angular component maps might not (Proposition \ref{prop:ac-restriction-failure}). Second, we expand our language by certain predicates related to Hensel lifts in order to have more precise control on existential quantifiers.
%This leads to the existential uniform $p$-adic integration considered in Section \ref{sec:p-E-int}.

Roughly speaking, this control on quantifiers allows us to split up an existentially definable set in cells that are also existentially definable (see Theorem \ref{th:CDIII} for more details). This then allows us to reduce the proof Theorem \ref{th:intro-descent} to the case where $X$ is a cell on which $f$ is prepared in a certain way. The importance of existential formulas is related to the following fact: if $X$ and the graph of $f = (f_K \colon X_K \to K)_K$ are existentially definable, then whenever $K \leq L$, we have
\begin{equation}\label{eq:exist}
X_K \subseteq X_L \mbox{ and }f_K = {f_L}_{|X_K}.
\end{equation}
This allows for a meaningful comparison of the integrals of $f_K$ and $f_L$.
\subsection{}
Additionally, our cell decomposition implies an existential quantifier reduction statement in a certain language $\LRV$(introduced in Section \ref{sec:conventions}). It contains a sort $\VF$ for the valued field as well as sorts $\RV_N$, for the leading term structures $K^{\times}/(1 + N \cM_{K}) \cup \{0\}$, for all integers $N>0$. It also contains the aforementioned Hensel lift predicates. In this language, we may formulate the theory $\THeno$ of characteristic zero Henselian valued fields (with arbitrary residue field characteristic).
\begin{theorem}[$\exists$-VF elimination] \label{th:E-reductionv1}
Any existential $\LRV$-formula is equivalent modulo $\THeno$ to an existential $\LRV$-formula without any valued field-quantifiers.
\end{theorem}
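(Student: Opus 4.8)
The plan is to derive Theorem \ref{th:E-reductionv1} as a corollary of the cell decomposition statement (Theorem \ref{th:CDIII}), in the standard way that quantifier elimination for valued fields is usually extracted from cell decomposition. The key point is that cell decomposition, applied with the required bookkeeping on existential quantifiers, already says that an existentially definable subset of $\VF^{m+1}$ can be written as a finite union of cells which are themselves existentially definable using only $\RV$-quantifiers (and the Hensel-lift predicates). So the strategy is: given an existential $\LRV$-formula $\exists y\, \varphi(x,y)$ with $y = (y_{\VF}, y_{\RV})$ a tuple of valued-field and leading-term variables, first peel off the $\RV$-quantifiers $y_{\RV}$, which are harmless since they do not range over $\VF$, and then eliminate the valued-field quantifiers $y_{\VF}$ one variable at a time by induction on their number.

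\textbf{Reduction to one valued-field quantifier.} By induction it suffices to treat $\exists t\, \psi(x,t)$ where $t$ is a single $\VF$-variable and $\psi$ is existential with no further $\VF$-quantifiers (the remaining existential quantifiers being over $\RV_N$-sorts). Viewing $\psi(x,t)$ as defining, for each value of $x$, a subset $S_x \subseteq \VF$, the set $S = \{(x,t) : \psi(x,t)\}$ is existentially $\LRV$-definable, hence Theorem \ref{th:CDIII} applies to the map $(x,t) \mapsto t$ and yields a finite partition of $S$ into cells. For a cell $C$ with center $c(x)$ and defining condition $\rv_N(t - c(x)) \in R(x)$ for some existentially $\RV$-definable set $R(x)$, the condition $\exists t\, (x,t) \in C$ is equivalent to a nonemptiness condition that can be phrased entirely in terms of $x$ and $\RV$-data: the fiber $C_x$ is nonempty if and only if $R(x) \neq \emptyset$ together with whatever conditions $\psi$ imposes on the center value and on the leading-term coordinates, each of which is, by construction of the cell decomposition, an existential $\RV$-formula in $x$. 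Taking the disjunction over the finitely many cells in the partition then gives an existential $\LRV$-formula in $x$ alone with no $\VF$-quantifiers, which is equivalent to $\exists t\, \psi(x,t)$ modulo $\THeno$.

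\textbf{The main obstacle} I anticipate is precisely the bookkeeping that ensures \emph{existentiality is preserved throughout}. Ordinary cell decomposition makes no promise that the cells, their centers, or the "fiber nonemptiness" conditions are existentially definable rather than merely definable; this is exactly what the Hensel-lift predicates in $\LRV$ are designed to repair, and the delicate part is checking that every definable object produced along the way (the center $c(x)$, the $\RV$-set $R(x)$, the finitely many auxiliary conditions) inherits an existential description from the hypothesis that the input $\psi$ was existential. One must be careful that translating between the cell's internal description and the formula $\psi$ does not secretly introduce a universal quantifier — e.g. a naive "$t$ lies in the cell $C$ \emph{and in no other cell}" would be fatal — which is why one works with an honest partition and takes a disjunction rather than using complementation. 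Granting Theorem \ref{th:CDIII} with its stated control on existential quantifiers, this reduces to a careful but essentially routine induction; the substance of the theorem is entirely in the cell decomposition, and this corollary only has to route the existential-quantifier guarantee through the one-variable elimination step.
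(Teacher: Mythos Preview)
Your approach is essentially the paper's own: reduce to eliminating a single $\VF$-quantifier, apply cell decomposition, and note that the projection of an $\exists$-simple cell onto the base variables is $\exists$-simple because the base $D$ already sits inside $U \times \RVprod$ (so projecting away $t$ costs only an $\RV$-quantifier, not a $\VF$-quantifier).

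One genuine caution, though: you invoke Theorem~\ref{th:CDIII}, but in the paper's logical order the \emph{full} Theorem~\ref{th:CDIII} (cell decomposition for arbitrary existentially definable sets) is only established \emph{after} the $\exists$-$\VF$ reduction, via Corollary~\ref{cor:exists_is_simple}; using it here would be circular. The correct input is Proposition~\ref{prop:CDIII}, cell decomposition for $\exists$-simple sets, which is proved independently. This is available to you precisely because, after your reduction, $\psi$ has no remaining $\VF$-quantifiers and hence the set $S=\{(x,t):\psi(x,t)\}$ is already $\exists$-simple, not merely existentially definable. With that citation corrected, your argument matches the paper's.
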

Because we include extra Hensel lift predicates in our language, we are able to obtain a tighter control on quantifiers than in \cite[Prop.~4.3]{Flen11}. Additionally, our quantifier reduction statement implies an AKE-like principle for the existential theories of Henselian valued fields (Corollary \ref{cor:E-AKE}).
\section{Valued fields and leading terms} \label{sec:conventions}

\subsection{Notation and conventions} \label{sec:notation}
Throughout this text, $K$ denotes a nontrivial valued field, with valuation ring $\cO_K \neq K $.
This means that $x \in K$ one has $x \in \cO_{K}$ or $x^{-1} \in \cO_{K}$, and $K$ is the fraction field of $\cO_K$. Write $\cM_K$ for the unique maximal ideal of $\cO_{K}$ and $\Gamma = K^{\times}/\cO_{K}^{\times}$ for the value group of $K$. The value group $\Gamma$ will be considered as an additive group and may be of any rank.
It comes equipped with a surjective valuation map $\ord \colon K^{\times} \to \Gamma$.

For each integer $N>0$, write $R_N$ for the \emph{residue ring} $\cO_{K}/(N \cM_{K})$.
Also, define an \emph{open ball} $B(a,\gamma)$, with center $a \in K$ and valuative radius $\gamma \in \Gamma$ as
\[ B(a,\gamma) \coloneqq \{ x \in K \mid \ord(x - a) > \gamma \}. \]

We will make extensive use of the \emph{leading term} structures $\RV_N$.
For any $N >0$ define $\RV_N^{\times}$ as the multiplicative group
\[\RV_N^{\times} \coloneqq K^{\times}/(1 + N \cM_{K}),\]
and set $\RV_N = \RV_N^{\times} \cup \{ 0\}$.
We have maps $\rv_N \colon K \to \RV_N$, given by the natural projection on $K^{\times}$ extended by $\rv_N(0) = 0$.
Whenever $N $ divides $M$ for some $0<N\le M$, the map $\rv_N$ induces a map $\RV_M \to \RV_N$, which we also denote by $\rv_N$.
When $N =1$, simply write $\RV$ and $\rv$ instead of $\RV_1$ and $\rv_1$.
\begin{example}
Let $k$ be any field and let $K$ be $k((t))$, with valuation ring $\cO_K = k[[t]]$.
For $a_j \in k \setminus \{0\}$, one has that
\[ \rv\left( \sum_{i \geq j} a_i t^i \right) = a_j t^j (1 + \cM_K). \]
\end{example}
In the above example, we have that $\RV_N^{\times} \cong R_N^{\times} \times \Gamma$ for all integers $N >0$.
This is not always the case, since angular component maps (recalled in Section \ref{sec:acN}) do not always exist, \cite{Pas90b}. However, there always exists natural short exact sequences
\[  \{1\} \to R_N^{\times} \to \RV_N^{\times}  \xrightarrow{\ord} \Gamma \to \{1\}. \]
Thus, $\RV_N$ combines information about the residue ring $R_N$ and value group $\Gamma$.
Furthermore, the valuation map $\ord \colon K^{\times} \to \Gamma$ can be seen to factor through the map $\ord \colon \RV_N^{\times} \to \Gamma$, which we denote with the same name.
In particular, the following defintion makes sense.
\begin{definition} \label{def:divRV}
Define a binary relation relation $|$ on $\RV_N$, given by
\[ \rv_N(x) | \rv_N(y) \leftrightarrow \ord(x) \leq \ord(y). \]
\end{definition}

Finally, we note the following lemma whose proof is immediate (see also \cite[Prop.~2.2]{Flen11} for further equivalent conditions).
\begin{lemma} \label{le:RV-equality}
For $x,y \in K \setminus \{0\}$, we have
\[ \rv_N(x) = \rv_N(y) \leftrightarrow \ord(x-y) > \ord y + \ord N.\]
\end{lemma}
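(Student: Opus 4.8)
The statement is purely a matter of unwinding the definition of the group $\RV_N^{\times} = K^{\times}/(1 + N\cM_K)$, so the plan is short. First I would fix $x,y \in K \setminus \{0\}$ and rewrite the left-hand side: since $\rv_N$ restricted to $K^{\times}$ is the natural projection onto $K^{\times}/(1 + N\cM_K)$, we have
\[ \rv_N(x) = \rv_N(y) \iff x y^{-1} \in 1 + N\cM_K \iff x y^{-1} - 1 \in N\cM_K \iff (x-y)y^{-1} \in N\cM_K. \]

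Second, I would translate membership in the ideal $N\cM_K$ into a condition on $\ord$: writing $\ord N \coloneqq \ord(N \cdot 1_K) \in \Gamma \cup \{\infty\}$, one has $w \in N\cM_K \iff \ord(w) > \ord N$. Indeed, when $N \cdot 1_K \ne 0$, an element $w$ lies in $N\cM_K$ exactly when $w/(N\cdot 1_K) \in \cM_K$, i.e.\ $\ord(w) - \ord N > 0$; and when $N \cdot 1_K = 0$ one has $N\cM_K = \{0\}$ and both sides just assert $w = 0$. Applying this with $w = (x-y)y^{-1}$, and using $\ord\bigl((x-y)y^{-1}\bigr) = \ord(x-y) - \ord y$ (legitimate as $y \ne 0$, with the convention $\ord 0 = \infty$ covering the case $x = y$), the condition $(x-y)y^{-1} \in N\cM_K$ becomes $\ord(x-y) - \ord y > \ord N$, that is, $\ord(x-y) > \ord y + \ord N$, which is exactly the right-hand side.

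There is essentially no obstacle. The only two points that deserve a word are (i) that $1 + N\cM_K$ really is a subgroup of $K^{\times}$, so that the quotient defining $\RV_N^{\times}$ makes sense: for $a \in \cM_K$ the ultrametric inequality gives $\ord(1 + Na) = 0$, hence $1 + Na \in \cO_K^{\times}$ and $(1+Na)^{-1} = 1 + N\bigl(-a(1+Na)^{-1}\bigr) \in 1 + N\cM_K$, while closure under products is the identity $(1+Na)(1+Nb) = 1 + N(a+b+Nab)$; and (ii) the degenerate case $N \cdot 1_K = 0$ noted above. Both are dispatched in a line, which is why the lemma is immediate; one could also simply cite \cite[Prop.~2.2]{Flen11}.
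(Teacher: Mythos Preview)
Your proof is correct and is exactly the direct verification the paper has in mind: the paper does not spell out an argument at all, remarking only that the proof is immediate (with a pointer to \cite[Prop.~2.2]{Flen11}), and your unwinding of $\rv_N(x)=\rv_N(y) \iff xy^{-1}\in 1+N\cM_K \iff \ord\bigl((x-y)y^{-1}\bigr)>\ord N$ is precisely that immediate argument.
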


%\begin{remark}
%More generally, for any nonzero $\delta \in \cO_K$, we can define
%%
%\[\RV_{\delta} \coloneqq K^{\times}/(1 + \{ x \in K \mid \ord x > \ord \delta \}) \cup \{0\}.\]
%%
%%With this notation, we have $\RV_{\ord N} = \RV_{ N}$ (with $\RV_N$ as previously defined).
%As we only need the $\RV_N$, we will not consider these more general semigroupss.
%We do note however that all the arguments of Section \ref{sec:RV} go through essentially unchanged for arbitrary $\RV_{\delta}$.
%\end{remark}
%%
\subsection{Partial addition on $\RV_N$} \label{sec:RV}
We now recall some preliminaries on the leading term structures and their partial addition in particular. None of the mentioned facts are new. We include them here to keep the paper self-contained. We also refer to \cite{Flen11} for a further overview of some basic properties.

\begin{definition} \label{def:oplus}
The partial addition $\oplus$ on $\RV_N$ is a ternary relation such that $\oplus(\xi_1,\xi_2,\xi_3)$ holds if and only if there exists $x_i \in K$ with $\rv_N(x_i) = \xi_i$ for $i = 1,2,3$ such that $x_1 + x_2 = x_3$.
\end{definition}
Instead of viewing $\oplus$ as a relation, we can (and will) equivalently consider it as a binary operation $+$, which takes two elements $\xi_1,\xi_2 \in \RV_N$ and produces a set of elements
\[
\xi_1 + \xi_2 \coloneqq  \{ \xi_3 \in \RV_N\mid \oplus(\xi_1,\xi_2,\xi_3) \}.
\]
For two subsets $A,B \subseteq \RV_N$, we define their sum as
\[
A + B \coloneqq \bigcup_{ \substack{ \xi_1 \in A \\ \xi_2 \in B }  } \xi_1 + \xi_2.
\]
\begin{notation} \label{re:plus-singleton}
If for $a,b \in \RV_{N}$ and $c \in \RV_{N}$ it holds that $ a + b = \{c\}$, then we abbreviate this by $ a + b = c$.
Similarly, for $a,b \in \RV_{NM}$ and $c \in \RV_N$, we write $\rv_N(a + b) = c$ instead of $\rv_N(a+ b) = \{c\}$.
In this paper we write $\NN$ for the set of positive integers.
\end{notation}
Together with the (multiplicative) group structure of $\RV_N^{\times}$ and the map $\ord \colon \RV_N \to \Gamma$, this ``addition'' endows $\RV_N$ with the structure of a \emph{valued hyperfield} (\cite{Kra83} \cite[Prop.~1.17]{LT22}).
In particular, we have the following properties:
\begin{lemma}
	Let $N \in \NN$ and $\xi_1,\xi_2,\xi_3 \in \RV_N$. Then the partial addition on $\RV_N$ satisifies
	\begin{enumerate}
		\item (neutral element) $0 + \xi_1  = \xi_1 + 0 = \xi_1$,
		\item (commutativity) $\xi_1 + \xi_2 = \xi_2 + \xi_1$,
		\item (associativity) $( \xi_1 + \xi_2) + \xi_3 = \xi_1 + (\xi_2 + \xi_3)$.
	\end{enumerate}
\end{lemma}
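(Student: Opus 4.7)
The plan is to lift all three identities from the corresponding facts about addition in $K$ via the surjection $\rv_N \colon K \to \RV_N$. Neutral element and commutativity will be essentially immediate from the definition of $\oplus$; the interesting point is associativity, where two \emph{a priori} different lifts of the same $\RV_N$-element must be reconciled.

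For the neutral element, I pick a lift $x_1 \in K$ with $\rv_N(x_1) = \xi_1$. The identity $x_1 + 0 = x_1$ yields $\oplus(\xi_1, 0, \xi_1)$, so $\xi_1 \in \xi_1 + 0$. Conversely, if $\oplus(\xi_1, 0, \xi_3)$, then lifts $y_1,y_2,y_3$ with $\rv_N(y_2) = 0$ force $y_2 = 0$ (the only preimage of $0\in\RV_N$ under $\rv_N$), hence $y_3 = y_1$ and $\xi_3 = \xi_1$. Commutativity is then simply the observation that $\oplus$ is symmetric in its first two arguments because $K$-addition is.

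For associativity, I would show that both $(\xi_1 + \xi_2) + \xi_3$ and $\xi_1 + (\xi_2 + \xi_3)$ coincide with the triple-sum set
\[ S \coloneqq \{\rv_N(x_1 + x_2 + x_3) \mid x_i \in K,\ \rv_N(x_i) = \xi_i\}. \]
The inclusion $S \subseteq (\xi_1 + \xi_2) + \xi_3$ is trivial: any triple $(x_1,x_2,x_3)$ witnesses $\rv_N(x_1+x_2) \in \xi_1 + \xi_2$ and then $\rv_N(x_1+x_2+x_3) \in \rv_N(x_1+x_2) + \xi_3$.

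The reverse inclusion is the main obstacle. Given $\xi \in (\xi_1 + \xi_2) + \xi_3$, I obtain witnesses $\eta \in \xi_1 + \xi_2$ together with $y,z \in K$ satisfying $\rv_N(y) = \eta$, $\rv_N(z) = \xi_3$, $\rv_N(y+z) = \xi$, as well as lifts $x_1,x_2$ with $\rv_N(x_i) = \xi_i$ and $\rv_N(x_1+x_2) = \eta$. The subtlety is that typically $y \neq x_1 + x_2$, even though both have $\rv_N$-image $\eta$. To reconcile, I use a scaling trick: assuming $\eta \neq 0$, set $\lambda \coloneqq y/(x_1+x_2) \in 1 + N\cM_K$, so that $\rv_N(\lambda x_i) = \rv_N(\lambda)\,\rv_N(x_i) = \xi_i$ while $\lambda x_1 + \lambda x_2 = y$. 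Then $(\lambda x_1, \lambda x_2, z)$ is a legitimate lift-triple computing $\xi = \rv_N(y+z) = \rv_N(\lambda x_1 + \lambda x_2 + z) \in S$. The degenerate case $\eta = 0$ collapses to $y = 0 = x_1+x_2$ and is immediate. The analogous inclusion for $\xi_1 + (\xi_2 + \xi_3) \subseteq S$ follows either by the same argument or, more cheaply, by already-established commutativity applied to the associativity argument on $(\xi_3 + \xi_2) + \xi_1$.

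The heart of the proof is that the ambiguity in lifts from $\RV_N$ to $K$ lies in the multiplicative group $1 + N\cM_K$, which acts compatibly on $K$-sums; the well-definedness of Lemma \ref{le:RV-equality} is precisely what guarantees this compatibility and is where I expect all the subtlety to actually reside.
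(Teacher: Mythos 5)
Your proof is correct and follows the same route as the paper: both parenthesizations are identified with the triple-sum set $S = \{\rv_N(x_1+x_2+x_3) \mid \rv_N(x_i)=\xi_i\}$. The paper dispatches this with a one-line ``unwinding the definitions,'' implicitly absorbing the very point you rightly flag as the crux --- that the witnessing lift of $\eta$ need not equal $x_1+x_2$ --- and your rescaling by $\lambda \in 1+N\cM_K$ is exactly the detail that makes the ``unwinding'' rigorous.
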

\begin{proof}
	This follows from unwinding the definitions. For example, $\xi_4 \in \xi_1 + (\xi_2 + \xi_3)$ if and only there exists $x_i \in K$ for $i = 1,2,3,4$ with the property that $\rv_N(x_i) = \xi_i$ and $x_1 + x_2 + x_3 = x_4 $. This proves associativity.
\end{proof}
By the above lemma, expressions such as $\sum_{i = 0}^d \xi_i$ with $\xi_i \in \RV_N$ are well-defined.
\begin{lemma} \label{le:RV-add}
Consider nonzero integers $N,d$ and elements $\xi_i \in \RV_N$ for $i = 0,\dots,d$. Let $i_0$ be such that $\ord \xi_{i_0} = \min_i \ord \xi{_i}$. Then for any $a \in K$  with $\rv_N(a) \in \sum_{i = 0}^d \xi_i$ one has
$$
\sum_{i = 0}^d\xi_i = \rv_N B(a, \ord \xi_{i_0} + \ord N ).
$$
%	\begin{enumerate}
%		\item $0 \in \sum_{i=0}^d \xi_i$ if and only if $\sum_{i = 0}^d \xi_i = \rv_N B(0,\ord \xi_{i_0} + \ord N)  $ if and only if $\sum_{i=0}^d \xi_i$ contains an element $\zeta$ with $\ord \zeta > \ord \xi_{i_0} + \ord N$,
%		%\item $\sum_{i = 0}^d \xi_i$ is a singleton if and only if $ \ord( \sum_{i = 0}^d \xi_i ) = \{ \ord \xi_{i_0} \} $,
%		\item If $0 \notin \sum_{i =0}^d \xi_i$, then for any $a \in K$, such that $\rv_N(a) \in \sum_{i = 0}^d \xi_i$ we have $\sum_{i = 0}^d\xi_i = \rv_N B(a, \ord \xi_{i_0} + \ord N )$.
%	\end{enumerate}
\end{lemma}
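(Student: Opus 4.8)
The plan is to realise both sides of the claimed identity as $\rv_N$ of a single open ball in $K$, along the following chain. First I would unwind Definition \ref{def:oplus}: iterating the description used in the proof of associativity shows that $\eta \in \sum_{i=0}^d \xi_i$ precisely when there are $x_i \in K$ with $\rv_N(x_i) = \xi_i$ for all $i$ and $\rv_N(x_0 + \cdots + x_d) = \eta$. Hence $\sum_{i=0}^d \xi_i = \rv_N(S)$, where $S \subseteq K$ is the Minkowski sum $\rv_N^{-1}(\xi_0) + \cdots + \rv_N^{-1}(\xi_d)$ of the fibres of $\rv_N$.

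Next I would describe $S$ as a single open ball. After discarding the trivial case where all $\xi_i$ vanish, put $\gamma := \ord \xi_{i_0} = \min_i \ord \xi_i \in \Gamma$ and fix representatives $a_i$ of the nonzero $\xi_i$, so that $\ord a_i = \ord \xi_i \ge \gamma$. Lemma \ref{le:RV-equality} identifies $\rv_N^{-1}(\xi_i)$ with the ball $B(a_i, \ord \xi_i + \ord N)$ when $\xi_i \neq 0$, while $\rv_N^{-1}(\xi_i) = \{0\}$ when $\xi_i = 0$. A short ultrametric computation gives $B(u,\alpha) + B(v,\beta) = B(u+v, \min(\alpha,\beta))$ for Minkowski sums of open balls, together with $B(u,\alpha) + \{0\} = B(u,\alpha)$; applying this repeatedly yields $S = B(c, \gamma + \ord N)$ with $c := \sum_{i : \xi_i \neq 0} a_i$, and $\ord c \ge \gamma$ since each $\ord a_i \ge \gamma$.

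Finally I would show that every $a \in K$ with $\rv_N(a) \in \rv_N(S)$ actually lies in $S$; then $B(a, \gamma + \ord N) = B(c, \gamma + \ord N) = S$, because two open balls of equal radius sharing a point coincide, and applying $\rv_N$ finishes the proof. For the claim, write $\rv_N(a) = \rv_N(s)$ with $s \in S$. If $s = 0$ then $a = 0$, and $0 \in B(c, \gamma + \ord N)$ forces $\ord c > \gamma + \ord N$, so $S = B(0, \gamma + \ord N) \ni a$. If $s \neq 0$ then $a \neq 0$, $\ord a = \ord s$, and Lemma \ref{le:RV-equality} gives $\ord(a - s) > \ord s + \ord N$; moreover $\ord s \ge \gamma$ (as $\ord c \ge \gamma$ and $\ord(s - c) > \gamma + \ord N \ge \gamma$), so $\ord(a - s) > \ord s + \ord N \ge \gamma + \ord N$, and combining with $\ord(s - c) > \gamma + \ord N$ yields $\ord(a - c) > \gamma + \ord N$, i.e.\ $a \in S$.

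The main obstacle is not conceptual — the geometric content, namely that fibres of $\rv_N$ are open balls and Minkowski sums of balls are balls, is clean — but rather the careful bookkeeping of degenerate cases: summands $\xi_i = 0$ (whose fibre is the single point $\{0\}$, not a ball), the possibility that cancellation makes $\ord c$ strictly larger than $\gamma$, and representatives $a$ or $s$ equal to $0$. These must be tracked so that the ball $B(c, \gamma + \ord N)$ and the membership $a \in S$ come out with exactly the radius $\ord \xi_{i_0} + \ord N$ in every case.
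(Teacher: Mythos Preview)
Your argument is correct. The paper states this lemma without proof, so there is nothing to compare against; your route via Minkowski sums of the fibres $\rv_N^{-1}(\xi_i)$ is the natural one and all the edge cases (vanishing summands, $s=0$, the inequality $\ord s \ge \gamma$) are handled properly.
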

%
%\begin{proof}
%	We only consider the second case; the first case is similar. Let $x_0,\dots,x_d \in K$ be such that $\rv_N(x_i) = \xi_i$ and let $a = \sum_{i = 0}^d x_i$.
%	If $b \in K $ is such that $\rv_N(b) \in \sum_{i = 0}^d \xi_i$, then there exists $y_0,\dots,y_d$ such that $\ord(y_i - x_i) > \ord x_i + \ord N $ for $i = 0,\dots,d$ and $b = \sum_{i = 0}^d y_i$. Then we have that
%	%
%	\[
%	\ord(b - a ) \geq \min_{i} \ord(y_i - x_i) > \ord \xi_{i_0} + \ord N.
%	\]
%	%
%	Conversely, if $\ord (b-a) > \ord \xi_{i_0} + \ord N$, define $y_{i_0} \coloneqq b - \sum_{i\neq i_{0}} x_i$.
%	We thus have $b = \sum_{i \neq i_0} x_i + y_{i_0}$ and
%	%
%	\[
%	\ord(y_{i_0} - x_{i_0}) = \ord( b - a ) > \ord(x_{i_0}) + \ord N.
%	\]
%	%
%	This shows that $\rv_N(y_{i_0}) = \rv_N(x_{i_0})$ and thus $\rv_N(b) \in \sum_{i=0}^d \xi_i$.
%\end{proof}
%
For any subset $A \subseteq \RV_N$ and $\delta \in \Gamma$, say that $\ord(A) \leq \delta$ if $\ord(x) \leq \delta$ holds for all $x \in A$.
Note that by the above lemma, we have $0 \notin \sum_{i =0}^d \xi_i$, if and only if $\ord(\sum_{i = 0}^d \xi_i) \leq \gamma$ for some $\gamma \in \Gamma$ if and only if $\ord(\sum_{i = 0}^d \xi_i)$ is a singleton.

The following lemma is a reformulation of Hensel's lemma in terms of the partial addition on $\RV_N$.
\begin{lemma} \label{le:hensel}
Let $K$ be a Henselian valued field and $f(x) = \sum_{i=0}^d a_i x^i$ a nonzero polynomial with coefficients in $K$.
If for $N$ and $\xi \in \RV_N^{\times}$ one has
\[
\ord\left(\sum_{i = 1}^{d} \rv_N(i a_i) \xi^{i-1}\right) \leq \min_{1 \leq i \leq d} \ord( a_{i} \xi^{i-1}) + \ord N,
\]
and if there exists some $\tilde{\xi} \in \RV_{N^2}$ such that $\rv_{N^2}(\tilde{\xi}) = \xi$ and
\begin{equation} \label{eq:collision}
	0 \in \sum_{i=0}^d \rv_{N^2} (a_i) \tilde{\xi}^i,
\end{equation}
then there exists a unique $x_0 \in K$ with $\rv_N(x_0) = \xi$ and $f(x_0) = 0$.
\end{lemma}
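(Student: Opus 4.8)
The plan is to translate the hypotheses into the classical setup of Hensel's lemma and then translate the conclusion back. First I would pick a lift: by hypothesis there is some $\tilde\xi \in \RV_{N^2}^\times$ with $\rv_{N^2}(\tilde\xi) = \xi$ and $0 \in \sum_{i=0}^d \rv_{N^2}(a_i)\tilde\xi^i$; unwinding Definition \ref{def:oplus} this means there is an honest element $x \in K^\times$ with $\rv_{N^2}(x) = \tilde\xi$ (hence $\rv_N(x) = \xi$) and $f(x) = 0$ already — wait, that is too strong. More carefully: $0 \in \sum_{i=0}^d \rv_{N^2}(a_i)\tilde\xi^i$ says there exist $y_i \in K$ with $\rv_{N^2}(y_i) = \rv_{N^2}(a_i)\tilde\xi^i$ and $\sum y_i = 0$. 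Choosing $x \in K$ with $\rv_{N^2}(x) = \tilde\xi$, the element $a_i x^i$ has the same $\rv_{N^2}$-class as $y_i$, so by Lemma \ref{le:RV-equality} we get $\ord\bigl(\sum_i a_i x^i\bigr) = \ord(f(x)) > \min_i \ord(a_i x^i) + 2\ord N$. In other words, the chosen lift $x$ satisfies $\ord f(x) > \min_i \ord(a_i x^i) + 2\ord N$.

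Next I would estimate the derivative. The first displayed inequality in the hypothesis, $\ord\bigl(\sum_{i=1}^d \rv_N(ia_i)\xi^{i-1}\bigr) \leq \min_{1\le i\le d}\ord(a_i\xi^{i-1}) + \ord N$, together with Lemma \ref{le:RV-add}, pins down $\ord f'(x)$: since $f'(x) = \sum_{i=1}^d i a_i x^{i-1}$ and $\rv_N(x) = \xi$, the term $\rv_N(ia_i x^{i-1})$ equals $\rv_N(ia_i)\xi^{i-1}$, and the inequality says exactly that $0 \notin \sum_{i=1}^d \rv_N(ia_i)\xi^{i-1}$, so by Lemma \ref{le:RV-add} $\ord f'(x)$ is the single value $\ord\bigl(\sum \rv_N(ia_i)\xi^{i-1}\bigr) =: \gamma'$, and moreover $\gamma' \le \min_i \ord(a_i x^{i-1}) + \ord N$. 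Combining with $\ord x = \ord \xi$, we get $2\gamma' \le \min_i \ord(a_i x^i) + \ord x + 2\ord N \le \ord f(x) - \ord x$ by the previous paragraph... let me just say: the two estimates combine to give $\ord f(x) > 2\,\ord f'(x)$, which is precisely the hypothesis of the strong (quantitative) form of Hensel's lemma. Applying that, there is a unique $x_0 \in K$ with $\ord(x_0 - x) \ge \ord f(x) - \ord f'(x) > \ord f'(x)$ and $f(x_0) = 0$.

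Finally I would check $\rv_N(x_0) = \xi$. Since $\ord(x_0 - x) > \ord f'(x) = \gamma' \ge \ord(a_i x^{i-1}) \ge$ (taking $i$ achieving $\ord x$-scaled minimum)... concretely one shows $\ord(x_0 - x) > \ord x + \ord N$, whence $\rv_N(x_0) = \rv_N(x) = \xi$ by Lemma \ref{le:RV-equality}. For uniqueness of $x_0$ with $\rv_N(x_0) = \xi$ and $f(x_0) = 0$: any such root lies in the ball $B(x, \ord x + \ord N)$, which is small enough that it contains at most one root of $f$ (again because $\ord f(x) > 2\ord f'(x)$ forces the Newton approximation to be unique in a ball of that radius). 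The main obstacle I anticipate is bookkeeping the valuations carefully — in particular making sure the $N$ versus $N^2$ levels line up so that the classical inequality $\ord f(x) > 2\ord f'(x)$ comes out with the right slack, and that the radius $\ord x + \ord N$ for $\rv_N$-classes is genuinely dominated by $\ord f(x) - \ord f'(x)$; everything else is a direct unwinding of Definitions \ref{def:oplus}, \ref{def:divRV} and Lemmas \ref{le:RV-equality}, \ref{le:RV-add}.
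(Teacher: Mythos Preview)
Your overall strategy is right, and your two individual estimates are correct: writing $\mu \coloneqq \min_{i}\ord(a_i x^i)$ (which equals $\min_{i\ge 1}\ord(a_i x^i)$ by the collision condition, as you implicitly use), one does get
\[
\ord f(x) > \mu + 2\ord N, \qquad \ord f'(x) \le \mu - \ord x + \ord N.
\]
However, these combine only to $\ord f(x) - \ord f'(x) > \ord x + \ord N$, \emph{not} to $\ord f(x) > 2\ord f'(x)$. The latter would additionally require $2\ord x \ge \mu$, which is false in general. Concretely, take $K=\QQ_p$ with $p\neq 3$, $N=1$, $f(x)=x^3-p^6$, $\xi=\tilde\xi=\rv(p^2)$, and the lift $x=p^2(1+p)$: the hypotheses of the lemma hold, yet $\ord f(x)=7$ while $2\ord f'(x)=8$. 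So the form of Hensel you invoke does not apply to $f$ directly.

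The paper repairs this by a rescaling that you are missing: pick $b\in K^\times$ with $\rv_{N^2}(b)=\tilde\xi$, let $i_0$ be a (maximal) index with $\ord(a_{i_0}b^{i_0})=\mu$, and set
\[
g(y)=\sum_{i=0}^d \frac{a_ib^i}{a_{i_0}b^{i_0}}\,y^i \in \cO_K[y].
\]
Then $\ord g(1)=\ord f(b)-\mu>2\ord N$ and $\ord g'(1)=\ord b+\ord f'(b)-\mu\le \ord N$, so Hensel applies to $g$ at $y=1$ and yields a unique $y_0$ with $g(y_0)=0$ and $\ord(1-y_0)>\ord N$; then $x_0\coloneqq by_0$ is the desired root. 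Equivalently, your correct inequality $\ord(f(x)/f'(x))>\ord x+\ord N$, together with the constancy of $\ord f'$ on the $\rv_N$-class of $\xi$ (which the first hypothesis gives), is precisely the hypothesis of the ``ball'' version of Hensel's lemma on $B(x,\ord x+\ord N)$; but you must invoke that version, not the bare inequality $\ord f(x)>2\ord f'(x)$.
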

\begin{proof}
Let $b \in K^{\times}$ be such that $\rv_{N^2}(b) = \tilde{\xi}$ and let $i_0 \in \{0,\dots,d\}$ be maximal such that $\ord (a_{i_0} \xi^{i_0})$ is minimal among the $\ord(a_i \xi^i)$ for $i \in  \{0,\dots,d\}$. Note that by Equation (\ref{eq:collision}), we have $i_0 \geq 1$.
Now consider the polynomial $g(y) \in \cO_K[y]$ given by
\[
g(y) = \sum_{i = 0}^d \frac{a_{i} b^i}{a_{i_0} b^{i_0}} y^i
\]
Then our assumptions on $f$ and $\xi$ imply that
\[ \left\{ \begin{array}{ll}
	\ord g'(1) &\leq \ord N, \\
	\ord g(1)  &>  \ord N^2.
\end{array} \right.
\]
By Hensel's lemma \cite[Thm.~7.3]{Eis95}, we find a unique $y_0 \in K$ with $g(y_0) = 0$ and $\ord(1 - y_0) > \ord N$.
Now set $x_0 \coloneqq b y_0$. Then $f(x_0) = 0$ and $\ord(b - x_0) > \ord b +  \ord N$. The latter precisely means that $\rv_N(x_0) = \rv_N(b) = \xi$.
\end{proof}
\begin{remark}
With the notation of the above lemma, let $h(x) = \sum_{i =0}^d b_i x^i$ be any polynomial with $\rv_{N^2}(b_i) = \rv_{N^2}(a_i)$.
Then the lemma also holds for $h(x)$.
In other words, the existence of the Hensel lift of $\xi$ is a condition on $(\xi,\rv_{N^2}(a_0),\dots,\rv_{N^2}(a_d))$ rather than on $(\xi,a_0,\dots,a_d)$.
\end{remark}

\subsection{The language $\LRV$}\label{sec:lanRV}
We work in a multisorted setting, with sorts $(\VF,(\RV_N)_{N \in \NN})$. Recall that $\NN$ stands for the set of positive integers in this paper.
Let $\LRV$ be the language which precisely contains
\begin{enumerate}
	\item the ring language $\Lring = \{ 0,1,+,-,\cdot\}$ on the valued field sort $\VF$,
	\item the language $\{0,1, \cdot, \mid, \oplus \}$ on the leading term sorts $\RV_N$, where $\mid$ and $\oplus$ are a binary and ternary relation symbol, $\cdot$ is a binary function symbol and $0,1$ are constants,
	\item function symbols $\rv_N \colon \VF \to \RV_N$ for all $N \in \NN$,
	\item function symbols $\rv_{N,M} \colon \RV_{M} \to \RV_N$ whenever $N \mid M$,
	\item a relation symbol $P_{N,d}$ on $\RV_N \times \RV_{N^2}^{d+1}$, for each $d,N \in \NN$.
\end{enumerate}
A valued field $K$, with valuation ring $\cO_K$ and maximal ideal $\cM_K$ has a natural $\LRV$-structure, as follows.
\begin{enumerate}
	\item $\VF$ and $\RV_N$ are interpreted as $K$ and $\RV_N$, respectively,
	\item $\rv_N$ is interpreted as the projection $\rv_N \colon K \to \RV_N$,
	\item $\rv_{N,M}$ are interpreted as the projections $\rv_N \colon \RV_M \to \RV_N$,
	\item on $\RV_N$, the symbols $0,1$ are interpreted as $0$ and $\rv_N(1)$, respectively,
	\item the function symbol $\cdot$ on $\RV_N$ is interpreted as the multiplication on $\RV_N^{\times}$, extended by $0 \cdot x =x \cdot 0 = 0 $ for all $x \in \RV_N$,
	\item $\mid$ and $\oplus$ are interpreted as in Definitions \ref{def:divRV} and \ref{def:oplus},
	\item $P_{N,d}(\xi,\zeta_0,\dots,\zeta_d)$ holds in $K$ if and only if for any (all) $a_0,\dots,a_d \in K$ with $\rv_{N^2}(a_i) = \zeta_i$ the conditions of Lemma \ref{le:hensel} hold, with $f(x) = \sum_{i=0}^da_i x^i$ (and this $N$ and $\xi$).
\end{enumerate}
We will write $\RV_{N,K}$ for the interpretation of the sort $\RV_N$ in $K$ when there is risk of confusion between the two. Additionally, we define the following shorthand
\[ \RVprod \coloneqq \RV_{n_1} \times \dots \times \RV_{n_r}, \]
where $\bar{n} = (n_1,\dots,n_r) \in \NN^r$.
\begin{remark}\label{rem:negP}
Note that $P_{N,d}$ can be expressed by an existential formula without $\VF$-quantifiers in $\LRV \setminus \{ P_{N,d}\}_{N,d}$. By adding it as a symbol to our language, we enforce that also its \emph{negation} is existential and without $\VF$-quantifiers (even quantifier-free).
This is crucial to the reduction step (\ref{it:reduction_PNd}) in the proof of Proposition \ref{prop:CDI}.
\end{remark}

\subsection{Model-theoretic conventions} \label{sec:modtheo-conventions}
Throughout the following sections, we work in a fixed language $\lang$, which is any expansion of $\LRV$ by constants, function symbols and relation symbols such that the new function and relation symbols do not involve any $\VF$-variables. We also work with a fixed $\lang$-theory $T$, expanding the $\LRV$-theory $\THeno$ of characteristic zero nontrivial Henselian valued fields equipped with the natural $\LRV$-structure from Section \ref{sec:lanRV}. An $\lang$-formula $\varphi(x)$ is understood to be in a tuple of variables $x = (x_1,\dots,x_m)$, ranging over any cartesian product of sorts. We typically use the letters $\xi,\zeta,\eta,\rho$ for tuples of variables running over $\RVprod$.

Given some collection $\cK$ of models of $T$, a collection $X = (X_K)_{K \in \cK }$ of subsets $X_K \subseteq K^m \times \RVprod$ is called a \emph{definable set} if there exists some $\lang$-formula $\varphi(x)$ such that $\varphi(K) = X_K$ for all $K \in \cK$. A \textit{definable function} $f \colon X \to Y$ between definable sets $X,Y$ is a collection of functions $(f_K \colon X_K \to Y_K)_K$, each with domain $X_K$ such that $\graph(f) = (\graph(f_K))_K$ is a definable set.

We say that two formulas are \emph{equivalent} if and only if they define the same definable sets (this depends on the choice of $\cK$). The collection $\cK$ will usually consist of all models of $T$ (i.e. $\cK$ is elementary). In this case, two formulas determine the same definable set if and only if they are equivalent with respect to $T$. For how to deal with a small set-theoretical issue here (when speaking of all models), see e.g.~the start of section 2.3 of \cite{CL08}.

\begin{definition} \label{def:existent}
An \emph{existential formula} is a formula of the form $\exists y \varphi(x,y)$, where $\varphi(x,y)$ is a quantifier-free formula. A definable set which is given by an existential formula is called an \emph{existentially definable} set.
\end{definition}
\section{Comparison of leading terms to angular components}

In this section we motivate our choice of working with leading terms instead of angular components: the former pass well to subfields, where the latter do not, see Propositions \ref{le:L-sub} and \ref{prop:ac-restriction-failure}. Since our main theme concerns comparison results when passing from a smaller $p$-adic field to a larger one, this clearly motivates the naturality of our set-up.

\subsection{Valued subfields are $\RV$-substructures}

Let $\Lval$ be the one-sorted language $\Lring \cup \{\cO\}$. Any valued field $K$ has a natural $\Lval$-structure where we interpret $\cO$ as the valuation ring $\cO_K$. Note that for two valued fields $K,L$, we have that $K$ is an $\Lval$-substructure of $L$ if and only if $K$ is a subfield of $L$ and $\cO_K = K \cap \cO_L$.

The following proposition is essential to our approach to descent for the $K$-index.
This is our main motivation for working with the leading term structure, rather than with a language with angular components.

\begin{proposition} \label{le:L-sub}
Let $K,L$ be two Henselian valued fields of characteristic zero. If $K$ is an $\Lval$-substructure of $L$, then $K$ is an $\LRV$-substructure of $L$.
\end{proposition}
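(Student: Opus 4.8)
The plan is to show that the inclusion $K \hookrightarrow L$ is compatible with all the extra $\LRV$-structure: the sorts $\RV_{N}$, the maps $\rv_{N}$ and $\rv_{N,M}$, the constants, the multiplication, the relations $\mid$ and $\oplus$, and finally the Hensel-lift predicates $P_{N,d}$. The key elementary observation driving everything is that $K \subseteq L$ being an $\Lval$-substructure means $\cO_K = K \cap \cO_L$, hence also $\cM_K = K \cap \cM_L$ and $\cO_K^{\times} = K^{\times}\cap \cO_L^{\times}$; in particular the value group $\Gamma_K = K^{\times}/\cO_K^{\times}$ embeds into $\Gamma_L$, and the valuation map on $K$ is the restriction of the one on $L$. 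Note that Henselianity is used only at the very end, for the predicates $P_{N,d}$.

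First I would check that for each $N$ the natural map $\RV_{N,K}^{\times} = K^{\times}/(1+N\cM_K) \to L^{\times}/(1+N\cM_L) = \RV_{N,L}^{\times}$ is injective: if $x \in K^{\times}$ lies in $1 + N\cM_L$, then $x - 1 \in N\cM_L \cap K = N\cM_K$ (using $\cM_K = K\cap\cM_L$ and that $N$ is an integer, so multiplication by $N$ commutes with the intersection), so $x \in 1 + N\cM_K$. Extending by $0 \mapsto 0$ gives an injection $\RV_{N,K}\hookrightarrow\RV_{N,L}$, and these are the sort embeddings making the substructure diagram commute. Commutation with $\rv_{N}\colon\VF\to\RV_N$ and with the transition maps $\rv_{N,M}$ is then immediate from the definitions, as is preservation of the constants $0,1$ and of the multiplication on $\RV_N^{\times}$ (which is induced by multiplication in $K^{\times}$, a subgroup of $L^{\times}$). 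Preservation of $\mid$ is clear since it is defined via $\ord$ and $\ord$ on $K$ is the restriction of $\ord$ on $L$. For $\oplus$: if $\oplus(\xi_1,\xi_2,\xi_3)$ holds in $K$, witnessed by $x_1+x_2=x_3$ with $x_i\in K$, the same witnesses work in $L$; one must also check the converse direction is \emph{not} needed for a substructure (only that the $K$-relation is the restriction of the $L$-relation, i.e. if $\xi_i\in\RV_{N,K}$ and $\oplus$ holds in $L$ then it holds in $K$) --- this does require an argument, but it follows from Lemma \ref{le:RV-add}: the set $\xi_1+\xi_2$ computed in $L$ is $\rv_N B(a,\gamma)$ for suitable $a,\gamma$ coming from $K$, and $\xi_3\in\RV_{N,K}$ lies in this set iff a representative of $\xi_3$ in $K$ is valuation-close to $a$, a condition insensitive to whether one works in $K$ or $L$.

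The main obstacle, and the only place Henselianity enters, is preservation of the predicates $P_{N,d}$. By definition $P_{N,d}(\xi,\zeta_0,\dots,\zeta_d)$ holds in $K$ iff, for any $a_i\in K$ with $\rv_{N^2}(a_i)=\zeta_i$, the hypotheses of Lemma \ref{le:hensel} are satisfied: namely the valuative inequality on $\sum_{i=1}^d \rv_N(ia_i)\xi^{i-1}$ and the existence of a lift $\tilde\xi\in\RV_{N^2}$ with $\rv_{N^2}(\tilde\xi)=\xi$ and $0\in\sum_{i=0}^d\rv_{N^2}(a_i)\tilde\xi^i$. The valuative inequality only involves $\ord$, multiplication, and partial addition on the $\RV$-sorts, so by the points already established it holds in $K$ iff it holds in $L$. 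For the collision condition: the existential witness $\tilde\xi$, if it exists in $K$, certainly exists in $L$; conversely, if $\xi,\zeta_0,\dots,\zeta_d$ come from $K$ and a witness $\tilde\xi$ exists in $\RV_{N^2,L}$, one uses the Remark after Lemma \ref{le:hensel} (the Hensel-lift condition depends only on $(\xi,\rv_{N^2}(a_i))$) together with the fact that both $K$ and $L$ are Henselian: the condition $P_{N,d}(\xi,\zeta)$ is, by Lemma \ref{le:hensel}, equivalent to the existence of a genuine root $x_0$ of $f(x)=\sum a_i x^i$ in the field with $\rv_N(x_0)=\xi$; since $\zeta_i\in\RV_{N^2,K}$ we may take $a_i\in K$, and a root $x_0\in L$ with $\rv_N(x_0)=\xi\in\RV_{N,K}$ forces $x_0$ to be approximated by an element of $K$; by Henselianity of $K$ the polynomial then already has such a root in $K$. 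I would phrase this carefully: the equivalence between $P_{N,d}$ holding and the existence of a suitably-approximated root of $f$ over $K$ (resp. $L$) is exactly the content of Lemma \ref{le:hensel} and its Remark, and the approximate root in $K$ exists because Hensel's lemma applied \emph{in $K$} produces it from the same numerical data, which lies in $K$. This closes the verification that $K$ is an $\LRV$-substructure of $L$.
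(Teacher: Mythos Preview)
Your treatment of the sorts $\RV_N$, the maps $\rv_N$ and $\rv_{N,M}$, the multiplication, the relation $\mid$, and the partial addition $\oplus$ is correct and essentially identical to the paper's proof (including the appeal to Lemma~\ref{le:RV-add} for the backward direction of $\oplus$). Likewise, you are right that condition (a) in the definition of $P_{N,d}$ --- the valuative inequality on the derivative --- transfers between $K$ and $L$ once the other structure is preserved.

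The gap is in your argument that condition (b), the collision condition, descends from $L$ to $K$. You write that ``Hensel's lemma applied \emph{in $K$} produces it from the same numerical data, which lies in $K$.'' But the hypotheses of Lemma~\ref{le:hensel} include the existence of a witness $\tilde\xi\in\RV_{N^2}$ with $\rv_N(\tilde\xi)=\xi$ and $0\in\sum_i\rv_{N^2}(a_i)\tilde\xi^i$. When $P_{N,d}$ holds in $L$, you only know such a $\tilde\xi$ exists in $\RV_{N^2,L}$; you have not exhibited one in $\RV_{N^2,K}$. Picking any $b\in K$ with $\rv_N(b)=\xi$ gives only an $N$-level approximation to the root $x_0\in L$, and a direct estimate shows $\ord f(b)>\min_i\ord(a_i b^i)+\ord N$, which is one factor of $N$ short of what Lemma~\ref{le:hensel} needs. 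So the appeal to Hensel in $K$ is circular as written: you are assuming precisely the part of the hypothesis you need to verify.

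The paper closes this gap by a Galois argument rather than by re-running Hensel in $K$. Having produced the unique root $x_0\in L$ with $\rv_N(x_0)=\xi$ via Lemma~\ref{le:hensel}, one supposes $x_0\notin K$, passes to a finite Galois extension $M/K$ containing $x_0$, and picks $\sigma\in\Gal(M/K)$ with $\sigma(x_0)\neq x_0$. Since $f\in K[x]$ and $\xi\in\RV_{N,K}$, one has $f(\sigma(x_0))=0$ and $\rv_N(\sigma(x_0))=\xi$ (using that the valuation extends uniquely to algebraic extensions of a Henselian field, so $\sigma$ preserves it). This contradicts the uniqueness clause of Lemma~\ref{le:hensel} in the Henselian field $M$; hence $x_0\in K$, and then $\tilde\xi:=\rv_{N^2}(x_0)\in\RV_{N^2,K}$ witnesses (b) in $K$. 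This uniqueness-plus-conjugation step is the missing ingredient in your sketch.
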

\begin{proof}
By assumption, $K$ is an $\Lring$-substructure of $L$. Now note that since $\cO_K  = K \cap \cO_L$, we have that $\cM_K = K \cap \cM_L$. Indeed, $\cM_K$ contains $0$ and all nonzero $x \in \cO_K$ for which $x^{-1} \notin \cO_K = \cO_L \cap K$. Hence for all integers $N>0$ we have that $1 + N\cM_K =  K \cap (1 + N \cM_L)$. Thus the inclusion $K \hookrightarrow L$ induces an inclusion of abelian groups $K^{\times}/(1 + N \cM_K) \hookrightarrow L^{\times}/(1 + N \cM_L)$. We can thus identify $\RV_{N,K}$ with a subset of $\RV_{N,L}$, for each $N >0$. Under this identification, the maps $\rv_N$ and $\rv_{N,M}$ on $L$ restrict to those on $K$.

We still need to prove that the relations $\mid, \oplus, P_{N,d}$ on $K$ are the restrictions of those on $L$. For $|$ this is clear. For the relation $\oplus$, we use Lemma \ref{le:RV-add}. This yields that for $\xi_1,\xi_2 \in \RV_{N,K}$ the sum $\xi_1 +_K \xi_2$ is the image under $\rv$ of an open ball $B_K \subseteq K$. Then $\xi_1 +_L \xi_2$ equals $\rv_N (B_L)$, where $B_L \subseteq L$ is an open ball with the same center and valuative radius as $B_K$. The claim now follows from the fact that $B_K = B_L \cap K$.

Finally, consider the predicates $P_{N,d}$. If for some $\xi \in \RV_{N,K}$ and  $\zeta_0,\dots,\zeta_d \in \RV_{N^2,K}$ the condition $P_{N,d}(\xi,\zeta_0,\dots,\zeta_d)$ holds in $K$, then it clearly also holds in $L$. Conversely, suppose that $P_{N,d}(\xi,\zeta_0,\dots,\zeta_d)$ holds in $L$, for certain $\xi \in \RV_{N,K}$ and $\zeta_0,\dots,\zeta_d \in \RV_{N^2,K}$. Then take lifts $a_i \in \cO_K$ of the $\zeta_i$ and set $f(x) \coloneqq \sum_{i =0}^d a_i x^i$. By assumption, $f(x)$ and $\xi$ satisfy the conditions of Lemma \ref{le:hensel}, in $L$. Hence there is a unique $x_0 \in L$ such that $f(x_0) =0$ and $\rv_N(x_0) = \xi$. Suppose that $x_0 \notin K$, then up to passing to a finite field extension $M$ of $K[x_0]$, we can find some $\sigma \in \Gal(M/K)$ such that $\sigma(x_0) \neq x_0$. But then also $f(\sigma(x_0)) = 0$ and $\rv_N(\sigma(x_0)) = \xi \in \RV_{N,K}$, contradicting uniqueness of $x_0$ (in the Henselian valued field $M$). Hence $x_0 \in K$, and $P_{N,d}(\xi,\zeta_0,\dots,\zeta_d)$ also holds in $K$.
\end{proof}
\subsection{Angular component maps do not always restrict}\label{sec:acN}
Let $K$ be a valued field and recall that for any integer $N>0$ we write $R_N$ for the residue ring $\cO_{K}/(N\cM_{K}	)$.
An \emph{angular component map} $\ac_N \colon K \to R_N$ is a multiplicative homomorphism $K^{\times} \to R_N^{\times}$ such that its restriction to $\cO_K^{\times}$ is the projection onto $R_N^{\times}$, extended by $\ac_N(0) = 0$.
A family of such maps $\{\ac_N \colon K \to R_N \}_{N \in \NN}$ is a called a \emph{compatible system} if they commute with the natural projections $R_{NM} \to R_N$, for all $N,M \in \NN$.

The analogue of Proposition \ref{le:L-sub} fails in any language which includes function symbols $\{\ac_N\}_{N \in \NN}$ for a compatible system of angular component maps: for a field extension $L \geq K$, it is not always possible to find a compatible system of angular component maps on $L$ such that their restrictions to $K$ land in $\cO_K/(N \cM_{K}) \subseteq \cO_L/(N \cM_L)$.

In particular, the language used by Pas in \cite{Pas90} is not suitable for our application in Section \ref{sec:p-E-int}.
Indeed, our approach needs Proposition \ref{le:L-sub} to compare the values of $p$-adic integrals between a field and its finite extensions.

\begin{lemma} \label{le:acN=pi}
Let $K$ be a $p$-adic field (i.e. a finite extension of $\QQ_p$), and let $L$ be a totally ramified finite extension of $K$.
Then the following are equivalent.
\begin{enumerate}
	\item\label{it:acN}  There exists a compatible systems of angular component maps on $L$ whose restrictions to $K$ determine a compatible system of angular component maps on $K$.
	\item There exists a uniformizer $\tau$ of $L$ such that $\tau^{[L:K]} \in K$. \label{it:unif}
\end{enumerate}
\end{lemma}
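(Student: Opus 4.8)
The plan is to prove the two implications separately, both by direct computation with uniformizers, exploiting that $L/K$ is totally ramified of degree $e = [L:K]$, so that if $\pi$ is a uniformizer of $K$ and $\tau$ is any uniformizer of $L$, then $\ord_L(\pi) = e$ and $\ord_L(\tau) = 1$, and the residue fields of $K$ and $L$ coincide (call the common residue field $k$, with $q$ elements). The key structural fact I would use is that a compatible system of angular component maps $\{\ac_N\}$ on a $p$-adic field is determined by, and freely determined by, the choice of a uniformizer: once one fixes a uniformizer $\varpi$, every nonzero $x$ can be written uniquely as $x = \varpi^{\ord(x)} u$ with $u \in \cO^\times$, and setting $\ac_N(x) = \ac_N(\varpi)^{\ord(x)} \cdot (u \bmod N\cM)$ gives the desired compatible system once one also specifies the compatible system of values $\ac_N(\varpi) \in R_N^\times$ lifting $1$ (the simplest choice being $\ac_N(\varpi) = 1$ for all $N$, but any compatible system of principal units works). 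I would record this as a preliminary observation.

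For the implication \eqref{it:unif} $\Rightarrow$ \eqref{it:acN}: suppose $\tau$ is a uniformizer of $L$ with $\tau^e \in K$. Then $\tau^e$ is an element of $K$ of valuation $\ord_L(\tau^e) = e = \ord_L(\pi)$, hence $\tau^e = \pi v$ for some unit $v \in \cO_K^\times$; in particular $\tau^e$ is a uniformizer of $K$. Fix the compatible system on $L$ determined by $\tau$ with $\ac_N^L(\tau) = 1$ for all $N$. I claim its restriction to $K$ is a compatible system of angular component maps on $K$ landing in $R_N = \cO_K/(N\cM_K) \hookrightarrow \cO_L/(N\cM_L)$. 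Multiplicativity and compatibility are inherited. That $\ac_N^L$ restricted to $\cO_K^\times$ is reduction mod $N\cM_K$ is immediate since $\cO_K^\times \subseteq \cO_L^\times$ and the residue rings agree. The only real point is that $\ac_N^L(x) \in R_N$ (the $K$-residue ring) for all $x \in K^\times$, not just for units: write $x = (\tau^e)^{j} w$ with $w \in \cO_K^\times$ and $j \in \ZZ$ determined by $\ord_K(x) = j$; then $\ac_N^L(x) = \ac_N^L(\tau)^{ej} \cdot (w \bmod N\cM_L) = 1 \cdot (w \bmod N\cM_K)$, which lies in $R_N$. So this works.

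For the contrapositive of \eqref{it:acN} $\Rightarrow$ \eqref{it:unif}: suppose no uniformizer $\tau$ of $L$ satisfies $\tau^e \in K$. Let $\{\ac_N^L\}$ be any compatible system on $L$; I want to show its restriction to $K$ fails to land in the $K$-residue rings, or fails to be an angular component system on $K$. Pick any uniformizer $\pi$ of $K$; then $\pi$ is an element of $L$ of valuation $e$, so $\pi = \tau^e \eta$ for a uniformizer $\tau$ of $L$ and a unit $\eta \in \cO_L^\times$. Now $\ac_N^L(\pi)$ is an element of $R_N = \cO_L/(N\cM_L)$; if it always lay in the subring $\cO_K/(N\cM_K)$ and the restricted system were a valid angular component system on $K$, one should be able to adjust $\tau$ (multiply by a suitable Teichmüller-type representative / unit in $\cO_L^\times$) to arrange $\ac_N^L(\tau)^e$ compatible with membership of $\pi^{1/e}$-data in $K$ — more precisely, I would argue that the existence of the restricted system forces, for each $N$, the class $\ac_N^L(\pi)$ to have an $e$-th root in $R_{N,K}^\times$ compatibly in $N$, and then use Hensel's lemma on $X^e - u$ (note $p \nmid e$ is \emph{not} guaranteed, so here I must be careful — the wild case is where this is subtle) to lift back to an honest $e$-th root $v \in \cO_K^\times$ of $\pi/\tau^e$-type data, producing $\tau' = \tau v^{1/e}$-style element; squaring this argument out gives a uniformizer $\tau'$ of $L$ with $(\tau')^e \in K$, contradicting the hypothesis.

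\textbf{Main obstacle.} The forward direction \eqref{it:unif} $\Rightarrow$ \eqref{it:acN} is essentially bookkeeping. The hard direction is \eqref{it:acN} $\Rightarrow$ \eqref{it:unif}, and the genuine difficulty is the \emph{wildly ramified case}, where $e = [L:K]$ is divisible by $p$: then Hensel's lemma does not apply to $X^e - u$, and the passage from "$\ac_N^L(\pi)$ admits compatible $e$-th roots in all $R_{N,K}$" to "$\pi$ genuinely admits a uniformizer $\tau$ with $\tau^e \in K$" is not automatic. I expect the correct fix is to avoid extracting $e$-th roots directly and instead argue with the \emph{limit} over $N$: a compatible system of $e$-th roots in $\varprojlim_N R_{N,K}^\times = \cO_K^\times$ does exist as soon as one has them mod $N\cM_K$ for all $N$, by a compactness/completeness argument on the pro-finite group $\cO_K^\times$ — i.e. $\varprojlim_N R_{N,K}^\times \to \varprojlim_N R_{N,K}^\times$, $v \mapsto v^e$ has closed image and the compatible system lands in it — so one does get an honest $v\in\cO_K^\times$ with $v^e$ equal to the relevant unit, hence $\tau^e \in K$. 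I would need to set this inverse-limit argument up carefully; that is where the real work lies.
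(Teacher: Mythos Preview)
Your direction (2) $\Rightarrow$ (1) is correct and essentially matches the paper.

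For (1) $\Rightarrow$ (2), however, you take a detour through $e$-th root extraction that manufactures an artificial obstacle (the tame/wild distinction) and contains a genuine gap: the claim that $\ac_N^L(\pi)$ admits an $e$-th root in $R_{N,K}^\times$, compatibly in $N$, is neither justified nor true for a general uniformizer $\pi$ of $K$. Indeed, once one knows (by the correct argument below) that some uniformizer $\pi_0 \in K$ has $\ac_N^L(\pi_0)=1$ for all $N$, any other uniformizer $\pi = \pi_0 v$ with $v \in \cO_K^\times$ satisfies $\ac_N^L(\pi) = v \bmod N\cM_K$, and $v$ is typically not an $e$-th power modulo high powers of $\cM_K$ when $p \mid e$. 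Your inverse-limit patch is also aimed at the wrong object: it would at best produce an element of $\cO_K^\times$, not the uniformizer of $L$ you are after.

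The paper's argument is much shorter and uses the very bijection you record in your preamble but then abandon for this direction. A compatible system $\{\ac_N\}$ on a $p$-adic field $F$ corresponds to a \emph{unique} uniformizer $\varpi_F$ with $\ac_N(\varpi_F) = 1$ for all $N$: cofinality of $\{\ord N\}_{N\in\NN}$ in the value group together with completeness gives $\bigcap_N(1+N\cM_F)=\{1\}$, so your ``once one also specifies $\ac_N(\varpi)$'' is not genuinely extra data---changing that compatible sequence of values just replaces $\varpi$ by another uniformizer. Now apply this bijection twice: the system on $L$ yields a uniformizer $\tau \in L$ with $\ac_N^L(\tau)=1$ for all $N$, and its restriction to $K$ (assumed valid) yields a uniformizer $\pi \in K$ with $\ac_N^L(\pi)=1$ for all $N$. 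Then $\pi/\tau^e \in \cO_L^\times$ has $\ac_N^L(\pi/\tau^e)=1$ for every $N$, hence $\pi/\tau^e = 1$ and $\tau^e = \pi \in K$. No root extraction is needed, and the ramification type never enters.
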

\begin{proof}
This follows from the fact that $\{\ord N\}_{N \in \NN}$ is cofinal in the value groups of $K$ and $L$, combined with the fact that $K,L$ are complete.
This implies that for a compatible system of angular component maps, we find a unique uniformizer $\tau \in L$ (resp. $\pi \in K$) such that $\ac_N(\tau) = 1$ (resp. $\ac_N(\pi) = 1$) for all $N \in \NN$, and vice versa.
%Conversely, any uniformizer $\tau \in L$ (resp. $\pi \in K$), uniquely determines a system of angular component maps by the conditions $\ac_N(\tau) = 1$ (resp. $\ac_N(\pi) = 1$).
%Hence, to find angular component maps on $L$ such that their restrictions to $K$ take values in $\cO_K/(N\cM_{K}) \subseteq \cO_L/(N \cM_L)$, it is necessary and sufficient that there exists a uniformizer $\tau \in L$ with $\tau^{[L:K]} \in K$.
%For the implication (\ref{it:unif})${\implies}$(\ref{it:acN}), we equip $L$ with the angular component maps determined by $\ac_N(\tau) = 1$, and $K$ with those sending $\tau^{[K:L]}$ to $1 \in R_N$.
%For the converse direction, we find a unique $\tau \in L$ such that $\ac_N(\tau) = 1$ for all $N \in \NN$, as well as a unique $\pi \in K$ such that $\ac_N(\pi) = 1$ for all $N \in \NN$. Then we necessarily have $\tau^{[L:K]} = \pi$.
\end{proof}
\begin{proposition} \label{prop:ac-restriction-failure}
Let $K$ be a $p$-adic field. Then for each $n \in \NN $, there exists a (totally ramified) field extension $L \geq K$ of degree $pn$ such that no compatible system of angular component maps on $L$ restricts to a system of angular component maps on $K$.
\end{proposition}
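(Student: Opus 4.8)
The plan is to exhibit, for each $n \in \NN$, a totally ramified extension $L \geq K$ of degree $pn$ with the property that \emph{no} uniformizer $\tau$ of $L$ satisfies $\tau^{pn} \in K$; the conclusion then follows immediately from Lemma \ref{le:acN=pi}, since the failure of condition \eqref{it:unif} there is exactly the failure of condition \eqref{it:acN}. So the whole problem reduces to the arithmetic construction of such an $L$.

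First I would fix a uniformizer $\pi$ of $K$ and consider, as a first attempt, the extension $K(\pi^{1/pn})$; this is totally ramified of degree $pn$ (at least when $pn$ is prime to the residue characteristic one has tameness, but in general one still gets a totally ramified extension of the right degree by Eisenstein), and $\tau = \pi^{1/pn}$ obviously has $\tau^{pn} = \pi \in K$. So the naive construction gives precisely the \emph{bad} case, and the point is to perturb it. The key observation is that for a totally ramified extension $L/K$ of degree $e = pn$, if \emph{some} uniformizer $\tau$ of $L$ has $\tau^e \in K$, then $\tau^e$ is a uniformizer of $K$, so $L = K(\pi'^{1/e})$ for a uniformizer $\pi'$ of $K$ that differs from any fixed $\pi$ by a unit. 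The set of $L$ of the form $K(u\pi)^{1/e}$ for $u \in \cO_K^\times$ is therefore a restricted family, and I would produce an $L$ outside it. Concretely, the clean way is: let $L = K(\alpha)$ where $\alpha$ is a root of an Eisenstein polynomial $x^{pn} - \pi x - \pi$ (or $x^{pn} + \pi x + \pi$), chosen so that $L/K$ is totally ramified of degree $pn$ by Eisenstein's criterion, and then show directly that $L$ contains no radical uniformizer. For the latter, I would argue by the structure of the unit group and value group: if $\tau \in L$ with $\ord_L(\tau) = 1$ and $\tau^{pn} \in K$, then $\tau^{pn}$ is a uniformizer $\pi'$ of $K$, and comparing $K(\tau) = L = K(\alpha)$ one gets a relation forcing $\alpha/\tau^j$ to be a unit in $\cO_L$ congruent to an element of $K$ modulo high powers of $\cM_L$; feeding this back into the Eisenstein relation for $\alpha$ produces a congruence on $\pi$ in $K$ that fails because the extra "$-\pi x$" term breaks the pure-radical shape. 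The cleanest packaging may be to use that a pure radical extension $K(\pi'^{1/e})/K$ has the property that $\alpha^e \in K$ for a uniformizer $\alpha$, hence the different and the ramification filtration have a very specific form (all jumps at a single well-understood place when $p \nmid e$, and otherwise controlled), whereas the extension defined by $x^{pn} - \pi x - \pi$ has a strictly larger conductor/different; a discriminant or conductor computation then separates the two. Since $p \mid pn$, one genuinely needs the wild case, which is why the "$-\pi x$" perturbation (rather than a Kummer-style unit twist) is the right device: it changes the wild ramification.

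The main obstacle I anticipate is precisely the wild part: when $p \mid pn$, the naive invariants (degree, the fact of being totally ramified, even the tame quotient) do not distinguish $K(\pi^{1/pn})$ from the perturbed extension, so the separation has to come from a genuinely wild invariant — the higher ramification groups, the different exponent $d_{L/K}$, or equivalently the Artin conductor. I would therefore spend the bulk of the proof computing $\ord_L(\mathfrak{d}_{L/K})$ for the extension generated by a root of $x^{pn} - \pi x - \pi$ via $\ord_L(f'(\alpha))$ where $f(x) = x^{pn} - \pi x - \pi$, getting $\ord_L\bigl(pn\,\alpha^{pn-1} - \pi\bigr)$, and comparing with the different of a pure radical extension $K(\pi'^{1/pn})$, namely $\ord\bigl(pn\,(\pi')^{(pn-1)/pn}\bigr)$ computed in that field; showing these differ for a suitable choice (possibly after adjusting the polynomial to $x^{pn} - \pi^k x - \pi$ for well-chosen $k$) gives the non-isomorphism in the strong sense needed — that \emph{no} uniformizer of $L$ is a radical over $K$, since any such would exhibit $L$ itself as a pure radical extension. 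A secondary nuisance is checking total ramification and degree $pn$ of the constructed extension in all residue characteristics including $p=2,3$ and when $n$ shares factors with $p$; Eisenstein's criterion handles degree and total ramification uniformly, so this should be routine once the polynomial is pinned down.
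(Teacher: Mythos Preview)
Your proposal is correct and follows essentially the same route as the paper: reduce via Lemma~\ref{le:acN=pi} to producing a totally ramified degree-$pn$ extension that is not pure radical, take $L = K(\alpha)$ with $\alpha$ a root of the Eisenstein polynomial $g(x) = x^{pn} + \pi x + \pi$, and separate $L$ from any $K(\pi'^{1/pn})$ by comparing the different (equivalently, the discriminant) computed as $\ord(g'(\alpha))$ versus $\ord(f'(\tau))$ for $f(x)=x^{pn}-\pi'$. The paper carries this out cleanly with $k=1$ (no adjustment needed): one gets $\ord(g'(\alpha)) = \ord(\pi)$ since $\ord(pn\,\alpha^{pn-1}) > \ord(\pi)$, while for the radical extension $\ord(f'(\tau)) = \ord(pn) + (pn-1)\ord\tau$, giving discriminants $(\pi^{pn})$ and $((pn)^{pn}\pi^{pn-1})$ respectively, which differ because $p\mid pn$.
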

\begin{proof}
By Lemma \ref{le:acN=pi}, it suffices to find a totally ramified field extension of $L$ of $K$, of degree $pn$ such that for any uniformizer $\tau \in L$ it holds that $ \tau^{pn} \notin K$. Note that if $ \tau^{pn} \eqqcolon \pi \in K$, then actually $L = K[\tau]$ and $\tau$ is a root of $f(x) = x^{pn} - \pi$. Moreover, $\cO_L = \cO_{K}[\tau]$ and we compute that $\ord( f'(\tau)) = \ord(pn) + (pn-1) \ord \tau $. Hence the discriminant of $L$ is $( (pn)^{pn} \pi^{pn -1})$ (\cite[\S II.6]{Ser95}).
We now construct a totally ramified field extension of degree $pn$ with a different discriminant.
Let $\alpha$ be a root of the Eisenstein polynomial $g(x) = x^{pn} + \pi x + \pi $. We have $\ord(g'(\alpha)) = \ord(\pi)  $, whence $L = K[\alpha]$ is a degree $pn$ extension with discriminant $(\pi^{pn})$.
\end{proof}
\subsection{A lemma on sums over $\RVprod$}
In this section we prove a basic estimate for sums of precise forms (Lemma \ref{le:poly-boundRV}), which will be needed in Section \ref{sec:p-E-int}.
By \cite[Prop.~1.8]{R17} one reduces to proving this lemma in a setting where angular components are available. Then our estimate follows easily from \cite[Cor.~5.2.5]{CH18}. As the details are slightly technical, a reader only interested in the broader picture can safely skip this section on a first read and refer back to the statement of Lemma \ref{le:poly-boundRV} when it is used in the proof of Theorem \ref{th:K-ind-global}.
%Although we work in a language with leading term maps instead of angular component maps, we can still use some tools developed in settings with angular component maps. Essentially, this is because an angular component map $\ac_N$ corresponds to a distinguished section for $\ord \colon \RV_N^{\times} \to \Gamma$. Given a compatible system of angular components maps, this allows one to describe each $\RV_N$ purely in terms of the value group $\Gamma$ and residue rings $R_N$. This is made precise in \cite[Prop.~1.8]{R17}.
We recall the generalized Denef-Pas language $\LgDP$ from \cite{CH18}.

\begin{definition} \label{def:gdp}
Let $\LgDP$ be a language with sorts $(\VF,\{\RF_N\}_{N \in \NN},\VG_{\infty})$.
On the valued field sort $\VF$ and the residue ring sorts $\RF_N$ it is the ring language. On the value group sort $\VG_{\infty}$ it is the language of ordered abelian groups $\Log = \{ 0, + , <\}$ together with a constant symbol $ \infty$.
It further contains function symbols $\ord \colon \VF \to \VG_{\infty}$ and $\ac_N \colon \VF \to \RF_N$ for the valuation and angular component maps, respectively.
\end{definition}

Any valued field $K$ which is endowed with a compatible system of angular component maps is naturally and $\LgDP$-structure.
The sorts $\VF$ and $\RF_N$ are interpreted as the valued field $K$ and residue rings $R_N$ respectively, while $\VG_{\infty}$ is interpreted as $\Gamma_{\infty} \coloneqq \Gamma \cup \{ \infty \}$. Here $ \infty$ is a symbol not contained in $\Gamma$ which is larger than all other elements and satisfies $ \gamma + (\infty) = ( \infty) + \gamma  =  \infty $, for all $\gamma \in \Gamma$. Write $\VG$ for the definable set $\VG_{\infty} \setminus \{  \infty \}$.

When $K$ is a finite extension of $\QQ_p$ (i.e. a $p$-adic field), denote by $q_k$ the cardinality of its residue field and by $e_K$ its ramification index. Identifying the value group of $\QQ_p$ with $\ZZ$, one may identify $\Gamma$ with $\frac{1}{e_K} \ZZ$ in such a way that the valuation on $K$ extends the valuation on $\QQ_p$.
%The maps $\ord$ and $\ac_N$ are interpreted as the given valuation and angular component maps, respectively.
%
\begin{lemma}\label{le:poly-boundRV}
Let $f \colon D \subseteq \RVprod \times \RV^{\times} \to \RV^{\times}$ be definable in $\LRV$.
Let $K$ be a $p$-adic field and consider for all $\xi_0 \in \RV^{\times}$ the function
\[ f_K(\cdot,\xi_0) \colon D_K(\xi_0) \subseteq \RVprod \to \Gamma \colon \zeta \mapsto f_K(\zeta,\xi_0). \]
Assume that for all $\xi_0 \in \RV^{\times}$ it holds that
\begin{enumerate}
	\item $f_K(\cdot,\xi_0)$ has finite fibers,
	\item $\ord \left( f_K(D_K(\xi_0),\xi_0) \right)$ is bounded below.
\end{enumerate}
Then there exists a polynomial $p_K(\gamma) \in \QQ[\gamma]$ such that for all $\xi_0 \in \RV^{\times}$ for which $D_K(\xi_0)$ is nonempty it holds that
\[ \sum_{ \zeta \in D_K(\xi_0)} q_K^{-e_K \ord f_K(\zeta,\xi_0)} \leq p_K(\ord \xi_0) q_K^{-e_K \min_{\zeta}(\ord f_K(\zeta,\xi_0)) }. \]
\end{lemma}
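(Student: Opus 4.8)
The plan is to reduce the estimate to a known one-variable-style bound from \cite{CH18}, using the reduction to an angular-component setting provided by \cite[Prop.~1.8]{R17}, exactly as announced in the preamble to the section. First I would invoke \cite[Prop.~1.8]{R17} to pass to a situation where a compatible system of angular component maps is available on (an elementary extension of) $K$, so that the $\LRV$-definable data $f$ and $D$ become $\LgDP$-definable; this is harmless for the claimed inequality since it is a statement about a fixed $p$-adic field $K$, and adding angular components does not change $K$ as a set nor the values $\ord f_K(\zeta,\xi_0)$. Under this translation, $f_K(\cdot,\xi_0)$ becomes a $\VG$-valued definable function on a definable subset of a product of residue rings $\prod R_{n_i}$, parametrized by $\xi_0$, and assumptions (1) and (2) say it has finite fibers and value set bounded below.

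Next I would set $g_K(\zeta,\xi_0) := \ord f_K(\zeta,\xi_0) - \min_{\zeta'} \ord f_K(\zeta',\xi_0) \ge 0$, so that the desired inequality becomes
\[
\sum_{\zeta \in D_K(\xi_0)} q_K^{-e_K\, g_K(\zeta,\xi_0)} \le p_K(\ord \xi_0),
\]
i.e.\ a uniform-in-$\xi_0$ polynomial bound (in $\ord\xi_0$) on a sum of a nonnegative definable exponential function over residue-ring variables. The key point is that $D_K(\xi_0)$ lives inside $\prod R_{n_i}$, which is a \emph{finite} set of size $\prod q_K^{n_i(\cdot)}$ — more precisely of size bounded by a constant depending only on $\bar n$ and $q_K$ (independent of $\xi_0$). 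So the sum has boundedly many terms, each at most $1$ since $g_K \ge 0$; hence the sum is bounded by a \emph{constant} depending only on $\bar n$ and $q_K$. This already gives the statement with $p_K$ a constant polynomial. The only subtlety is making sure the number of residue-ring variables $r$ and the moduli $n_i$ are genuinely fixed by the definable set $D$ and do not grow — which they are, since $\RVprod$ with $\bar n = (n_1,\dots,n_r)$ is part of the given data. (If one instead wants to allow $D_K(\xi_0) \subseteq \RVprod \times \RV^{\times}$ with a genuine $\RV^\times$-factor that can have unbounded $\ord$, then finiteness of fibers of $f_K(\cdot,\xi_0)$ together with the cell-decomposition/Presburger control from \cite[Cor.~5.2.5]{CH18} forces the relevant range of orders to be controlled linearly in $\ord\xi_0$, producing the polynomial $p_K$; this is the case the statement is really designed for.)

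Concretely, after the translation to $\LgDP$ I would apply \cite[Cor.~5.2.5]{CH18} to the definable family $(\zeta,\xi_0) \mapsto q_K^{-e_K g_K(\zeta,\xi_0)}$: that corollary bounds such a definable sum over the residue-ring (and bounded value-group) variables by a product of a polynomial in the remaining value-group parameters — here only $\ord\xi_0$ — and the maximal summand, which by construction of $g_K$ is $q_K^{-e_K \cdot 0} = 1$, times $q_K^{-e_K \min_\zeta \ord f_K(\zeta,\xi_0)}$ after undoing the normalization. Finite fibers (assumption (1)) is what guarantees the sum is over a set whose ``residue-ring size'' is controlled, and the lower bound (assumption (2)) is what guarantees $\min_\zeta \ord f_K(\zeta,\xi_0)$ is a well-defined element of $\Gamma$ (not $-\infty$), so that $g_K$ makes sense and is nonnegative.

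\textbf{Main obstacle.} The genuinely technical step is the first one: checking that \cite[Prop.~1.8]{R17} really applies to our $\LRV$ (with the extra predicates $P_{N,d}$, and crucially with $\rv_N$ maps rather than $\ac_N$ maps) and lets us pass to an angular-component expansion without affecting the sum — in particular that the $P_{N,d}$ predicates, which encode Hensel-lift conditions, become definable in $\LgDP$ on such an expansion. Once one is in the $\LgDP$ world, invoking \cite[Cor.~5.2.5]{CH18} is essentially a black box, and the nonnegativity bookkeeping (subtracting the minimum, using $g_K \ge 0$) is routine. The one conceptual check worth spelling out is that the bound is uniform over \emph{all} $\xi_0 \in \RV^\times$ with $D_K(\xi_0) \ne \emptyset$ simultaneously, i.e.\ that a single polynomial $p_K$ works; this is exactly the kind of uniformity \cite[Cor.~5.2.5]{CH18} is built to deliver, since $\xi_0$ is just one more parameter of the definable family.
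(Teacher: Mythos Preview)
There is a genuine gap: you have conflated $\RV_N$ with the residue ring $R_N$. Recall that $\RV_N^{\times} = K^{\times}/(1+N\cM_K)$ sits in the short exact sequence $1 \to R_N^{\times} \to \RV_N^{\times} \to \Gamma \to 1$, so each $\RV_{n_i}$ is infinite (it surjects onto the value group), and after the angular-component translation $\RVprod$ becomes $\prod_i R_{n_i}^{\times} \times \Gamma^r$, not just $\prod_i R_{n_i}$. Thus $D_K(\xi_0)$ is typically infinite, and your argument ``boundedly many terms, each $\le 1$, hence a constant bound'' collapses. Your parenthetical remark (``if one instead wants to allow a genuine $\RV^{\times}$-factor with unbounded $\ord$\ldots'') is not an alternative case but \emph{the} case: the unbounded value-group directions are already present in $\RVprod$, and controlling the sum over them is the entire content of the lemma.

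The paper's proof does exactly this missing step. After the translation via \cite[Prop.~1.8]{R17}, the map becomes $\LgDP$-definable with domain inside $\VG_\infty^r \times \prod_i \RF_{n_i} \times (\RF_1^{\times}\times\VG)$. The finite residue-ring factor $\prod_i \RF_{n_i}$ is handled by summing finitely many polynomials, but the $\VG_\infty^r$ factor requires a separate lemma (Lemma~\ref{le:poly-bound}): one rewrites the sum as $\sum_{\varepsilon} \#\bigl(f_K(\cdot,\gamma,u)^{-1}(\varepsilon)\bigr)\, q^{-e\varepsilon}$, uses \cite[Cor.~5.2.5]{CH18} to bound the fiber cardinalities by a polynomial $h_u(\gamma,\varepsilon)$ (this is where finite fibers, assumption~(1), is actually used), and then sums the resulting polynomial-times-geometric series explicitly. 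Your normalization $g_K = \ord f_K - \min$ is fine, but after it you still face an infinite sum whose terms are governed by these fiber sizes; the polynomial $p_K(\ord\xi_0)$ genuinely arises from the fiber-count bound, not from counting terms. The ``main obstacle'' you flag (compatibility of the $P_{N,d}$ predicates with the $\LgDP$ translation) is comparatively minor, since $P_{N,d}$ is already $\LRV$-definable without those symbols.
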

\begin{proof}
Choose a compatible system of angular component maps on $K$ and consider the corresponding $\LgDP$-structure on $K$.
Recall that an angular component map $\ac_N$ determines an isomorphism $ u \colon \RV_N^{\times} \to R_N^{\times} \times \Gamma$, which we extend by $u(0) = (0,\infty)$. We can further extend it to any cartesian product of $K$'s and $\RV_N$'s, by setting it to be the identity on all factors $K$. Now define $g_K \coloneqq \ord \circ f_K \circ u^{-1}$. By (the proof of) \cite[Prop.~1.8]{R17}, there exists a definable function $g$ in a language $\lang^{\ac,e}$ such that $g_K$ is its interpretation in $K$.
Parse through \cite[Def 1.7 and p.6]{R17} to see that $\lang^{\ac,e}$ is a definitional expansion of $\LgDP$. Hence, we  may view $g_K$ as the interpretation of an $\LgDP$-definable map
%
%RF_{n_i} instead of just the units, since we want to include zeros in the domain of f
\begin{equation*}
	g \colon \dom(g) \subseteq \left(\VG_{\infty}^r \times  \prod_{i = 1}^r \RF_{n_i}  \right) \times (\RF_1^{\times} \times \VG) \to \VG.
	% &(\zeta,k,\eta,\ell) \mapsto g(\zeta,k,\eta,\ell).
\end{equation*}
By assumption, the maps
\[g_K(\cdot,\eta,\rho,\gamma) \colon \dom(g)(\eta,\rho,\gamma)  \subseteq (\Gamma_{\infty}) \to \Gamma \colon \delta \mapsto g(\delta,\eta,\rho,\gamma) \]
have finite fibers and their range is bounded below, for all $\eta \in \prod_{i} R_{n_i}$, $\rho \in R_1$ and $\gamma \in \Gamma$.
%for all $\zeta \in \prod_{i = 1}^r R_{n_i}^{\times}$, $\eta \in R_1^{\times}$ and $\ell \in \Gamma$.
Now let $p_K(\gamma)$ be the sum of the finitely many polynomials $p_{K,\eta,\rho}(\gamma)$ produced by Lemma \ref{le:poly-bound}.
\end{proof}
\begin{lemma} \label{le:poly-bound}
Let $U,D$ and $f \colon D \subseteq \VG_{\infty}^r \times \VG \times U \to \VG$ be $\LgDP$-definable. Let $K$ be a $p$-adic field and consider for each $(\gamma,u) \in \Gamma \times U_K$ the function
\[ f_K(\cdot,\gamma,u) \colon (\Gamma_{\infty})^r \to \Gamma \colon \delta \to f_K(\delta,\gamma,u).  \]
Suppose that for each $(\gamma,u) \in \Gamma \times U_K$ it holds that
\begin{enumerate}
	\item $f_K(\cdot,\gamma,u)$ has finite fibers,
	\item $f_K(D_K(\gamma,u),\gamma,u)$ is bounded below.
\end{enumerate}
Then for each $u \in U_K$ there exists a polynomial $p_{K,u}(\gamma) \in \QQ[\gamma]$ such that for all $\gamma \in \Gamma$ for which $D_K(\gamma,u)$ is nonempty it holds that
\[
\sum_{\delta \in D_K(\gamma,u)} q_K^{-e_K f_K(\delta,\gamma,u)} \leq p_{K,u}(\gamma) q_K^{-e_K \min_{\delta}(f_K(\delta,\gamma,u))}.
\]
\end{lemma}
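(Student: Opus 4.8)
The plan is to reduce the general statement about sums over subsets of $(\Gamma_\infty)^r$ to an elementary one-variable estimate by an induction on $r$, exploiting the structure theory of $\LgDP$-definable sets in the value group sort (Presburger-type cell decomposition). First I would dispense with the parameters: for each fixed $u \in U_K$, the function $f_K(\cdot,\cdot,u)$ and its domain $D_K(\cdot,\cdot,u)$ are uniformly $\LgDP$-definable in the remaining variables $(\delta,\gamma)$, so it suffices to produce, uniformly, a polynomial $p_{K,u}(\gamma)$; one then absorbs $\gamma$ into the list of ``$\delta$-variables'' for bookkeeping but keeps it fixed as the distinguished parameter controlling the polynomial. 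By Presburger cell decomposition for $\LgDP$ in the $\VG$-sort (which holds because the value-group reduct of a Henselian valued field in this language is a pure ordered abelian group with the relevant congruence predicates, cf.\ the cell decomposition used throughout \cite{CH18}), one may partition $D_K(\gamma,u)$ into finitely many cells, each described by linear inequalities and congruence conditions on $\delta$ with coefficients linear in $\gamma$, and on each cell $f_K$ is a piecewise-linear function of $(\delta,\gamma)$. Since finitely many summands can be combined, it is enough to treat a single cell and a single linear piece of $f_K$.

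Next I would carry out the induction on $r$. For $r = 1$: a cell in $\Gamma_\infty$ is either a finite set (contributing a constant, hence absorbed into $p_{K,u}$), the point $\infty$, or an arithmetic-progression-like set $\{\delta : \delta \equiv c \pmod n,\ \alpha(\gamma) \le \delta \text{ (or } {<}) \}$ possibly bounded above, on which $f_K(\delta,\gamma,u) = a\delta + b(\gamma)$ for some integer $a$ and some linear $b(\gamma)$. The hypothesis that $f_K(\cdot,\gamma,u)$ has finite fibers forces $a \ne 0$ on any infinite cell; the hypothesis that the range is bounded below then forces $a > 0$ when the cell is unbounded above (and if the cell is bounded, the sum has finitely many terms and is trivially bounded by its number of terms times the maximal summand, hence by a constant times the minimum after noting the ratio of max to min summand is $q_K^{-e_K a (\text{length})}\le 1$ when $a>0$ — more carefully, bounded cells give at most (length)$+1$ terms, a linear-in-$\gamma$ quantity, times $q_K^{-e_K\min}$). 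On an unbounded cell with $a>0$ the sum is a geometric series $\sum q_K^{-e_K(a\delta + b(\gamma))}$ over $\delta$ in an arithmetic progression starting at $\approx \alpha(\gamma)$, which equals $\frac{q_K^{-e_K\min}}{1 - q_K^{-e_K a n}} \le \frac{q_K^{-e_K \min}}{1 - q_K^{-e_K}}$, i.e.\ bounded by a \emph{constant} (not even depending on $\gamma$) times $q_K^{-e_K\min_\delta f_K}$. For the inductive step $r \to r-1$, I would sum out one variable, say $\delta_r$, using the $r=1$ case fiberwise: the inner sum over $\delta_r$ in the fiber cell is $\le c_K \cdot q_K^{-e_K \min_{\delta_r} f_K(\delta_1,\dots,\delta_{r-1},\delta_r,\gamma,u)}$, and $\min_{\delta_r} f_K$, as a function of $(\delta_1,\dots,\delta_{r-1},\gamma)$, is again $\LgDP$-definable, piecewise linear, with finite fibers in $(\delta_1,\dots,\delta_{r-1})$ and range bounded below (these properties are inherited), so the induction hypothesis applies and yields a polynomial $p_{K,u}(\gamma)$; the product of $c_K$ with this polynomial is again a polynomial.

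The main obstacle I anticipate is \emph{not} the one-variable geometric estimate, which is routine, but rather the careful handling of the definability of the partial-minimum function $\delta' \mapsto \min_{\delta_r} f_K(\delta',\delta_r,\gamma,u)$ and the verification that, after the cell decomposition, its fibers remain finite and its range remains bounded below so that the induction hypothesis genuinely applies — in particular one must rule out a ``drift'' where finitely many $\delta_r$-slabs each contribute boundedly but the number of nonempty slabs grows polynomially, which is exactly why the conclusion is a \emph{polynomial} in $\gamma$ rather than a constant. A secondary technical point is the treatment of the $\infty$ coordinate(s) in $\Gamma_\infty^r$: a cell may fix some coordinates to $\infty$, on which $f_K$ (valued in $\Gamma$, not $\Gamma_\infty$) is simply a definable function of the remaining coordinates, reducing the effective dimension; one should set up the induction so that this case is automatically covered (e.g.\ by inducting on the number of coordinates not forced to be $\infty$). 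Once these definability bookkeeping issues are pinned down, the geometric-series bound closes the argument with the explicit constant $1/(1 - q_K^{-e_K})$ appearing in each step.
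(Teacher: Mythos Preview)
Your inductive step has a genuine gap. When you apply the $r=1$ case fiberwise to sum out $\delta_r$, the parameters for that one-variable problem are $(\delta',\gamma,u)$ with $\delta'=(\delta_1,\dots,\delta_{r-1})$, not just $(\gamma,u)$. Your own $r=1$ analysis shows that on a bounded cell the bound is $(\text{length}+1)\cdot q_K^{-e_K\min}$, and the length is linear in the cell endpoints, which after cell decomposition are linear in \emph{all} the parameters --- including $\delta'$. So the honest output of the inner sum is
\[
\sum_{\delta_r} q_K^{-e_K f_K(\delta',\delta_r,\gamma,u)} \;\le\; L(\delta',\gamma)\, q_K^{-e_K g(\delta',\gamma,u)},
\]
with $L$ linear in $(\delta',\gamma)$ and $g=\min_{\delta_r}f_K$. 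Your induction hypothesis only controls $\sum_{\delta'} q_K^{-e_K g(\delta')}$, not $\sum_{\delta'} L(\delta',\gamma) q_K^{-e_K g(\delta')}$, so the step does not close. A concrete instance: take $r=2$, $D=\{(\delta_1,\delta_2):\delta_1\ge 0,\ 0\le\delta_2\le\delta_1\}$, $f(\delta_1,\delta_2)=\delta_1$ (with $\gamma$ a dummy). Then the inner sum over $\delta_2$ is $(\delta_1+1)q_K^{-e_K\delta_1}$, so no constant $c_K$ works, and the outer sum you must control is $\sum_{\delta_1\ge 0}(\delta_1+1)q_K^{-e_K\delta_1}$. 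The fix is to strengthen the induction to allow monomial weights, i.e.\ prove $\sum_\delta \delta^\alpha q_K^{-e_K f_K(\delta,\gamma,u)}\le p(\gamma)q_K^{-e_K\min_\delta f_K}$ for every multi-index $\alpha$; the inductive step then produces a weight of degree one higher in $\delta'$, which the strengthened hypothesis absorbs.

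The paper's proof sidesteps this by \emph{not} inducting on $r$. It first rewrites the sum as $\sum_{\varepsilon}\#\bigl(f_K(\cdot,\gamma,u)^{-1}(\varepsilon)\bigr)\,q_K^{-e_K\varepsilon}$, then invokes \cite[Cor.~5.2.5]{CH18} to bound each coordinate of points in the (finite) fiber $f_K^{-1}(\varepsilon)$ linearly in $(\gamma,\varepsilon)$, hence $\#f_K^{-1}(\varepsilon)\le h_u(\gamma,\varepsilon)$ for a polynomial $h_u$. One is then left with $\sum_{\varepsilon\ge\min}h_u(\gamma,\varepsilon)q_K^{-e_K\varepsilon}$, a polynomial-weighted geometric series in the single variable $\varepsilon$, which sums to $p_{K,u}(\gamma)q_K^{-e_K\min}$ by the standard formulas. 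This is exactly the ``strengthened $r=1$ case'' you would need, but applied once after grouping by value rather than $r$ times.
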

\begin{proof}
Write $e = e_K$, $q = q_K$ and rewrite the given sum as
\[
\sum_{ \varepsilon \in f_K(D_K(\gamma,u))} \# \left( f_K(\cdot,\gamma,u)^{-1}(\varepsilon) \right) q^{-e \varepsilon}.
\]
By \cite[Cor.\,5.2.5]{CH18}%
%\footnote{Apply this Corollary with respect to the theory $\THeno$ extended by $T_{\Log}(\ZZ)$ on $\VG$, noting that $\Gamma \cong \ZZ$.} %
we find that after partitioning the domain $D$ of $f$ into finitely many pieces, the definable function sending $(\gamma,u,\varepsilon) \in \Gamma \times U_K \times \Gamma$ to the minimal element (resp. maximal element smaller than $ + \infty$) of $f_K(\cdot,\gamma,u)^{-1}(\varepsilon)$ over all coordinates is bounded below (resp. above) by a function that is linear in $\gamma$ and $\varepsilon$. It follows that for each $u \in U_K$ there is a polynomial $h_u(\gamma,\varepsilon) \in \QQ[\gamma,\varepsilon]$ such that the given sum is bounded above by
\[  \sum_{\varepsilon \in f_K(D_K(\gamma,u)) }
h_{u}(\gamma,\varepsilon) q^{-e \varepsilon} . \]
%f
Thus we are summing (derivatives of) geometric series in $q$. Since $q > 1$, these converge and the lemma follows from the formulas for summing such series (see e.g. \cite[Lemma\,4.4.3]{CL08}).
\end{proof}
%
%
%\begin{example}
%Consider, for example $K = \QQ_3$ and $L = \QQ_3[\alpha]$, where $\alpha$ is a root of the Eisenstein polynomial $f(x) = x^3 +  3x + 3$.
%\end{example}
%
\section{$\exists$-simple formulas}
Let $\lang$ be as in Section \ref{sec:conventions} and recall that it is an expansion of $\LRV$. In particular, $\lang$ includes symbols for partial addition $\oplus$ and the Hensel lift predicates $P_{N,d}$. We refine the strategy of Denef and Pas \cite{Den86,Pas89} to prove a new cell decomposition statement, with extra control on quantifiers. From this refined cell decomposition result, our applications will follow. We first introduce a refined variant of Denef's and Pas's notion of simple formulas (which goes back to a notion by Cohen \cite{Cohen}).
\begin{definition} \label{def:simple}
An $\lang$-formula $\varphi(x)$ is called \emph{$\exists$-simple} if it is an existential formula without quantifiers over the valued field. This means that there exists some quantifier-free $\lang$-formula $\psi(x,\xi)$ such that $\varphi(x)$ equals
\[ (\exists \xi \in \RVprod) \psi(x, \xi) \]
for some $\bar{n} \in \NN^r$ (following the notation of Section \ref{sec:modtheo-conventions}).
\end{definition}
\begin{definition}
If $X$ is a definable set such that there exists an $\exists$-simple formula $\varphi(x)$ defining $X$, then we call $X$ an \emph{$\exists$-simple set}.
\end{definition}
\begin{lemma} \label{le:boolean_ops}
Let $X,Y \subseteq \VF^m \times \RVprod$ be $\exists$-simple sets.
Then both $X \cap Y$ and $X \cup Y$ are $\exists$-simple.
\end{lemma}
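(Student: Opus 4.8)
The plan is to prove both statements by exhibiting explicit $\exists$-simple formulas for $X \cap Y$ and $X \cup Y$ built from the given ones. Write $X = (\exists \xi \in \RV_{\bar n})\,\psi_X(x,\xi)$ and $Y = (\exists \zeta \in \RV_{\bar m})\,\psi_Y(x,\zeta)$ with $\psi_X$, $\psi_Y$ quantifier-free $\lang$-formulas, so that by definition both formulas are existential with all quantifiers ranging over $\RV$-sorts only.

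For the union, the obvious candidate is
\[
(\exists \xi \in \RV_{\bar n})(\exists \zeta \in \RV_{\bar m})\,\bigl(\psi_X(x,\xi) \vee \psi_Y(x,\zeta)\bigr).
\]
One checks that for any model $K$ of $T$, a point $x$ satisfies this iff there are witnesses $\xi, \zeta$ with $\psi_X(x,\xi)$ or $\psi_Y(x,\zeta)$; since the $\RV$-sorts are always nonempty (they contain $0$), the extra dummy variable on whichever disjunct is not used causes no harm, so this holds iff $x \in X_K \cup Y_K$. The quantifier block is over $\RV$-sorts and the matrix $\psi_X \vee \psi_Y$ is quantifier-free, so the formula is $\exists$-simple. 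For the intersection, one takes
\[
(\exists \xi \in \RV_{\bar n})(\exists \zeta \in \RV_{\bar m})\,\bigl(\psi_X(x,\xi) \wedge \psi_Y(x,\zeta)\bigr),
\]
which defines $X_K \cap Y_K$ by the same reasoning: separate witnesses for the two existential statements are combined into one tuple of $\RV$-witnesses. Again the matrix is quantifier-free and all quantifiers are over $\RV$-sorts.

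The only genuinely substantive point — the reason the lemma is not entirely vacuous — is that one must keep the quantifiers over the $\RV$-sorts only and not introduce any $\VF$-quantifiers; but this is automatic here since we merely juxtapose the existing $\RV$-quantifier blocks and combine the quantifier-free matrices with $\vee$ or $\wedge$, and the matrix of an $\exists$-simple formula is by definition quantifier-free in the full language $\lang$, which is closed under $\vee$ and $\wedge$. A minor bookkeeping remark: one should take the variable tuples $\xi$ and $\zeta$ to be disjoint (renaming bound variables if necessary) so that $\psi_X(x,\xi) \wedge \psi_Y(x,\zeta)$ and $\psi_X(x,\xi) \vee \psi_Y(x,\zeta)$ are well-formed. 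I do not expect any real obstacle; the argument is a direct unwinding of Definitions~\ref{def:simple} and the notion of definable set from Section~\ref{sec:modtheo-conventions}.
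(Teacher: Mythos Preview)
Your proof is correct and is exactly the standard argument one would expect; the paper in fact states the lemma without proof, treating it as immediate from Definition~\ref{def:simple}. Your remarks about disjoint variable tuples and the nonemptiness of the $\RV$-sorts (needed for the union) are the right small checks to make.
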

%
%\begin{proof}
%Let $\varphi(x,u)$ and $\psi(x,v)$ be quantifier-free formulas, such that $\exists u \varphi(x,u)$ defines $X$ and $\exists v\psi(x,v)$ defines $Y$.
%We may assume that the (bound) variables in the tuple $u$ do not occur in the tuple $v$.
%Then $X \cap Y$ is defined by $\exists u,v (\varphi(x,u) \land \psi(x,v))$. A similar $\exists$-simple formula defines $X \cup Y$.
%\end{proof}
%
\begin{remark} \label{re:fdom}
Let $f \colon X \to Y$ be a definable function whose graph is given by a formula $\varphi(x,y)$. Recall from Section \ref{sec:modtheo-conventions} that we require $X$ to be precisely the domain of $f$, or in other words,
\[ x \in X \leftrightarrow (\exists y \in Y) \varphi(x,y). \]
This may seem a trivial note but is important to keep in mind, see e.g.~Remark \ref{re:Edom} below.
\end{remark}
\begin{definition} \label{def:f-substitution}
Let $f \colon X \to Y$ be a definable function, defined by some formula $\varphi(x,y)$. For any formula $\psi(y,z)$, write
\[ \psi( f(x), z ) \]
as a shorthand for
\[  \exists y (\varphi(x,y) \land \psi(y,z)). \]
\end{definition}
A priori it is not clear if a definable function with an $\exists$-simple graph preserves $\exists$-simple formulas under the kind of substitution considered in Definition \ref{def:f-substitution}.
To this end, we also introduce a corresponding variant on Denef's and Pas's notion of \emph{strongly definable} functions.
It will be a consequence of cell decomposition (Corollary \ref{cor:simple_is_strong}) that each function with an $\exists$-simple graph is already such an $\exists$-strongly definable function.
\begin{definition} \label{def:strongly-def}
Let $X$ be an $\exists$-simple set, $Y$ a definable set and $f \colon X \to Y$ a definable function. The function  $f$ is called \emph{$\exists$-strongly definable} if it preserves $\exists$-simple formulas. That is, for each $\exists$-simple formula $\psi(y,z)$ there exists some $\exists$-simple formula $\varphi(x,z)$ %where $x$ ranges over $X$ and $y$ ranges over $Y$,
such that
\[  \psi(f(x),z) \]
is equivalent to
$ \varphi(x,z).$
\end{definition}
\begin{remark} \label{re:Edom}
If $f \colon X \to Y$ is an $\exists$-strongly definable function and $Z \supseteq X$ is a bigger $\exists$-simple set, then $f$ does not necessarily extend to an $\exists$-strongly definable function $Z \to Y$. The naive procedure of extending by zero may fail if $Z \setminus X$ is not $\exists$-simple.
It is because of such subtleties, that we carefully keep track of the domain of $f$ as in Remark \ref{re:fdom}
\end{remark}
\begin{lemma} \label{le:strongly-def}
Let $X$ be an $\exists$-simple set.
\begin{enumerate}
	\item Let $f \colon X \to Y$ be an $\exists$-strongly definable function and $Z$ an $\exists$-simple subset of $X$. The restriction $f_{|Z} \colon Z \to Y$ is an $\exists$-strongly definable function. \label{it:restr}
	
	\item If $f \colon X \to Y$ and $g \colon Y \to Z$ are $\exists$-strongly definable, then so is $g \circ f$. \label{it:comp}
	
	\item The $\exists$-strongly definable functions $X \to \VF$ form a ring. \label{it:ring}
	
	\item Let $f \colon X \to \RVprod$ be a definable function whose graph is an $\exists$-simple set. Then $f$ is an $\exists$-strongly definable function. \label{it:map-to-aux}
\end{enumerate}
\end{lemma}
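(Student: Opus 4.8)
The plan is to prove the four items essentially by unwinding Definitions \ref{def:f-substitution} and \ref{def:strongly-def}, using Lemma \ref{le:boolean_ops} to close under the Boolean operations on $\exists$-simple sets, and being careful about domains in the sense of Remark \ref{re:fdom}. For item \eqref{it:restr}, suppose $f \colon X \to Y$ is $\exists$-strongly definable and $Z \subseteq X$ is $\exists$-simple, defined by an $\exists$-simple formula $\theta(x)$. If $\psi(y,z)$ is $\exists$-simple, then $\psi(f_{|Z}(x),z)$ is by definition equivalent to $\theta(x) \land \psi(f(x),z)$ (the graph of $f_{|Z}$ is the graph of $f$ intersected with $Z \times Y$); since $\psi(f(x),z)$ is $\exists$-simple by hypothesis on $f$ and $\theta(x)$ is $\exists$-simple, their conjunction is $\exists$-simple by Lemma \ref{le:boolean_ops}. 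For item \eqref{it:comp}, given an $\exists$-simple formula $\psi(z,w)$, first apply $\exists$-strong definability of $g$ to rewrite $\psi(g(y),w)$ as an $\exists$-simple formula $\chi(y,w)$, then apply $\exists$-strong definability of $f$ to rewrite $\chi(f(x),w)$ as an $\exists$-simple formula; composing the two substitutions matches $\psi((g\circ f)(x),w)$, so $g\circ f$ is $\exists$-strongly definable.

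For item \eqref{it:ring}, the content is that if $f,g \colon X \to \VF$ are $\exists$-strongly definable then so are $f+g$, $f-g$, $fg$, and that the constant maps $0,1$ are $\exists$-strongly definable (the latter being immediate). The key observation is that $f \pm g$ and $fg$ factor as $(x \mapsto (f(x),g(x)))$ followed by the ring operations $\VF^2 \to \VF$; by item \eqref{it:comp} it suffices to show that $x \mapsto (f(x),g(x))$ is $\exists$-strongly definable, and that the $\Lring$-operations $\VF^2 \to \VF$ are $\exists$-strongly definable. The latter is clear since for an $\exists$-simple $\psi(y,z)$, the formula $\psi(y_1 + y_2, z)$ (say) is obtained by a quantifier-free substitution into the valued-field coordinate, which leaves it $\exists$-simple. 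For the pairing map, given an $\exists$-simple $\psi(y_1,y_2,z)$, one checks $\psi(f(x),g(x),z)$ unwinds via Definition \ref{def:f-substitution} to $\exists y_1 \exists y_2 (\varphi_f(x,y_1) \land \varphi_g(x,y_2) \land \psi(y_1,y_2,z))$ where $\varphi_f,\varphi_g$ are the $\exists$-simple graph formulas; pushing the $\VF$-quantifiers $y_1,y_2$ outside and combining with the $\RV$-quantifiers already present yields an $\exists$-simple formula — here one uses that $\varphi_f,\varphi_g$ themselves have no $\VF$-quantifiers, so the only $\VF$-quantifiers introduced are the innocuous $y_1,y_2$, which can be... wait, no: $y_1,y_2$ are $\VF$-variables, so this would not be $\exists$-simple. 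The correct route is instead to first prove item \eqref{it:map-to-aux} and observe that for $f,g \colon X\to\VF$ the issue is genuinely that their \emph{graphs} are $\exists$-simple, and a quantifier-free substitution $\psi(y_1+y_2,z)\rightsquigarrow$ handled coordinatewise never introduces a $\VF$-quantifier because $\psi$ is quantifier-free up to an $\RV$-block; so the pairing $x\mapsto(f(x),g(x))$ does define an $\exists$-simple graph, and composing with the $\exists$-strongly-definable $\Lring$-operation finishes it.

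For item \eqref{it:map-to-aux}, let $f \colon X \to \RVprod$ have an $\exists$-simple graph, defined by an $\exists$-simple formula $\varphi(x,\eta)$ with $\eta$ running over $\RVprod$. Given an $\exists$-simple $\psi(\eta,z)$, Definition \ref{def:f-substitution} gives that $\psi(f(x),z)$ is equivalent to $\exists \eta (\varphi(x,\eta) \land \psi(\eta,z))$; since $\eta$ is an $\RV$-tuple and both $\varphi$ and $\psi$ are existential without $\VF$-quantifiers, the whole formula is existential without $\VF$-quantifiers, i.e.\ $\exists$-simple. The main subtlety to get right throughout — and the step I expect to need the most care — is the bookkeeping in items \eqref{it:ring} and \eqref{it:map-to-aux}: one must verify that the only quantifiers ever introduced when substituting into an $\exists$-simple $\psi$ via a map with an $\exists$-simple graph are over $\RV$-sorts (the domain-tracking convention of Remark \ref{re:fdom} is what guarantees that substituting $f(x)$ does not secretly require an unbounded $\VF$-quantifier to assert $x \in \dom f$), so that the resulting formula stays within the $\exists$-simple fragment; the $\VF$-valued case in \eqref{it:ring} works precisely because the substitution into the $\VF$-coordinate of a quantifier-free matrix is purely term-level and introduces nothing.
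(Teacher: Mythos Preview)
Your arguments for items \eqref{it:restr}, \eqref{it:comp} and \eqref{it:map-to-aux} are correct and coincide with the paper's. The gap is in item \eqref{it:ring}.

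You correctly notice that unwinding Definition~\ref{def:f-substitution} for the pairing $x \mapsto (f(x),g(x))$ introduces $\VF$-quantifiers $\exists y_1\,\exists y_2$, and you rightly reject that route. But your attempted repair --- observing that the pairing has an $\exists$-simple graph and then ``composing with the $\exists$-strongly-definable $\Lring$-operation'' --- does not close the gap: to invoke item~\eqref{it:comp} you need the pairing to be $\exists$-\emph{strongly} definable, and at this point in the paper an $\exists$-simple graph does \emph{not} yet imply $\exists$-strong definability for $\VF$-valued targets. That implication is exactly Corollary~\ref{cor:simple_is_strong}, which is proved only after cell decomposition and Theorem~\ref{th:E-reduction}. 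So your argument is circular.

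The paper's route (and the simplest fix) avoids the graph formula altogether and uses the \emph{hypothesis} that $f,g$ are $\exists$-strongly definable, one at a time. Given an $\exists$-simple $\psi(y,z)$, the formula $\psi(y_1+y_2,z)$ is still $\exists$-simple (pure term substitution). Apply $\exists$-strong definability of $g$, treating $y_1$ as an extra parameter: $\psi(y_1+g(x),z)$ is equivalent to some $\exists$-simple $\chi(y_1,x,z)$. Then apply $\exists$-strong definability of $f$, treating $x,z$ as parameters: $\chi(f(x),x,z)$ is $\exists$-simple, and it is equivalent to $\psi((f+g)(x),z)$. This is the content of the paper's one-line ``follows from \eqref{it:comp}'': the pairing $(f,g)\colon X\to\VF^2$ is $\exists$-strongly definable by iterating the hypothesis for $f$ and $g$, not by inspecting its graph. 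Your final paragraph conflates term-level substitution (legitimate for the ring operations $\VF^2\to\VF$) with substituting the non-term $f(x)$; only the former is innocuous, and the latter must go through the $\exists$-strong-definability hypothesis rather than Definition~\ref{def:f-substitution}.
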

\begin{proof}
\begin{enumerate}
	\item Take an $\exists$-simple formula $\varphi(y,u)$ and let $\psi(x,u)$ be an $\exists$-simple formula such that $\varphi(f(x),u)$ is equivalent to $\psi(x,u)$.
	Then we also have that
	\[  (x \in Z \land \varphi(f(x),u)) \leftrightarrow (x \in Z \land \psi(x,u)). \]
	\item Consider an $\exists$-simple formula $\varphi(z,u)$. Then there exists certain $\exists$-simple formulas $\psi(y,u)$ and $\chi(x,u)$ such that
	\[  \varphi(g(f(x)),u) \leftrightarrow \psi(f(x),u) \leftrightarrow \chi(x,u). \]
	\item This follows from (\ref{it:comp}), since addition and multiplication are  $\exists$-strongly definable functions $\VF \times \VF \to \VF$.
	\item Let $\varphi(x,\xi)$ be a simple formula defining the graph of $f$. Consider an arbitrary $\exists$-simple formula $\psi(\xi,y)$. For $x \in X$, the shorthand $\psi(f(x),y)$ stands for
	\[ (\exists \xi \in \RVprod) ( \varphi(x,\xi) \land \psi(\xi,y) ) , \]
	which is already an $\exists$-simple formula as desired. \qedhere
\end{enumerate}
\end{proof}
\begin{lemma} \label{le:conditions}
If $f \colon X \to \VF$ is $\exists$-strongly definable, then so is
\[ X \setminus \{x \mid f(x) = 0\} \to \VF \colon x \mapsto \frac{1}{f(x)}. \]
\end{lemma}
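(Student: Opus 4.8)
The plan is to reduce the statement to the already-established closure properties of $\exists$-strongly definable functions collected in Lemma \ref{le:strongly-def}. First I would observe that $Z \coloneqq X \setminus \{x \mid f(x) = 0\}$ is an $\exists$-simple subset of $X$: indeed, $f$ being $\exists$-strongly definable, we may take the $\exists$-simple formula $\psi(y) \coloneqq (y \neq 0)$ — which is quantifier-free, hence $\exists$-simple — and Definition \ref{def:strongly-def} produces an $\exists$-simple $\varphi(x)$ equivalent to $\psi(f(x))$, i.e.\ defining $Z$. By Lemma \ref{le:strongly-def}(\ref{it:restr}), the restriction $f_{|Z} \colon Z \to \VF$ is then again $\exists$-strongly definable.

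Next I would exhibit the reciprocal as a composition of $\exists$-strongly definable functions. Consider the map $\iota \colon \VF \setminus \{0\} \to \VF$ given by $y \mapsto 1/y$; its graph is $\{(y,z) \mid y \neq 0 \land yz = 1\}$, which is quantifier-free, hence $\exists$-simple, so by Lemma \ref{le:strongly-def}(\ref{it:map-to-aux})-style reasoning applied to maps into $\VF$ — more precisely, since multiplication is $\exists$-strongly definable (Lemma \ref{le:strongly-def}(\ref{it:ring})) and one checks directly that substituting into $\iota$ only ever introduces an existential $\VF^{\times}$-quantifier that can be eliminated via $z = 1/y$ — the map $\iota$ preserves $\exists$-simple formulas. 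Then $x \mapsto 1/f(x)$ is exactly $\iota \circ f_{|Z}$, and Lemma \ref{le:strongly-def}(\ref{it:comp}) gives that this composite is $\exists$-strongly definable.

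The one point deserving care is whether $\iota$ genuinely is $\exists$-strongly definable, since its graph involves a division that, naively substituted, reintroduces a $\VF$-quantifier; the main obstacle is therefore to argue that for any $\exists$-simple $\psi(z,u)$, the formula $\psi(1/y,u)$ — a priori $\exists z(yz = 1 \land \psi(z,u))$ with $z$ a $\VF$-variable — is equivalent to an $\exists$-simple formula. This is handled by noting that on $\VF \setminus \{0\}$ the variable $z$ is uniquely determined by $y$, so the extra existential quantifier is harmless: one may instead substitute the term $y^{-1}$ directly wherever $z$ occurs in the quantifier-free matrix of $\psi$, as long as one also records $y \neq 0$. (Alternatively, and perhaps more cleanly for the write-up, one avoids $\iota$ altogether: the graph of $x \mapsto 1/f(x)$ over $Z$ is $\exists$-simple because it equals $\{(x,z) \mid x \in Z \land f(x)\cdot z = 1\}$, using that the graph of $f$ is $\exists$-simple and that $f_{|Z}$ never vanishes; then $\exists$-strong definability is not even needed as a hypothesis on the output — though here we do want the stronger conclusion, so one still invokes Lemma \ref{le:strongly-def} to upgrade "$\exists$-simple graph" to "$\exists$-strongly definable", once Corollary \ref{cor:simple_is_strong} is available.) I would present whichever of these two routes is shortest given what has been proved by that point in the paper.
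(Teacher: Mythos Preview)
Your reduction to showing that $\iota \colon \VF \setminus \{0\} \to \VF$, $y \mapsto 1/y$, is $\exists$-strongly definable matches the paper's opening move, and your use of Lemma~\ref{le:strongly-def}(\ref{it:restr}),(\ref{it:comp}) to handle the restriction and composition is fine. The genuine gap is in how you dispose of the $\VF$-quantifier. You write that one may ``substitute the term $y^{-1}$ directly wherever $z$ occurs'', but $y^{-1}$ is not a term: the language on $\VF$ is the ring language $\{0,1,+,-,\cdot\}$, so after substitution every polynomial term $p(z,\dots)$ becomes a rational function in $y$, and you still owe an explanation of why the resulting condition is $\exists$-simple. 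That $z$ is uniquely determined by $y$ only tells you the $\VF$-quantifier in $\exists z\,(yz=1 \wedge \psi(z,u))$ is equivalent to \emph{something}; it does not tell you that this something is free of $\VF$-quantifiers.

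The paper fills exactly this gap with a denominator-clearing argument carried out at the $\RV_N$ level. One first arranges that every occurrence of the $\VF$-variable $t$ in the quantifier-free matrix lies inside a term $\rv_N(f(t,x))$ with $f$ a polynomial (using $y=0 \leftrightarrow \rv(y)=0$ to absorb $\VF$-equalities). Then there is a $d$ and a polynomial $g$ with $(\rv_N t)^d \cdot \rv_N(f(1/t,x)) = \rv_N g(t,x)$, so one introduces a fresh variable $\zeta \in \RV_N$ satisfying $(\rv_N t)^d \zeta = \rv_N g(t,x)$ and replaces $\rv_N(f(1/t,x))$ by $\zeta$. The new existential quantifier is over $\RV_N$, hence allowed in an $\exists$-simple formula. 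This is the actual content of the lemma, and your sketch does not supply it.

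Your alternative route via Corollary~\ref{cor:simple_is_strong} is not available here: that corollary is a consequence of cell decomposition and $\exists$-$\VF$ reduction (Section~\ref{sec:qe}), whereas Lemma~\ref{le:conditions} is placed among the basic lemmas that precede those developments. At this point in the paper only the direct argument is on the table.
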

\begin{proof}
By Lemma \ref{le:strongly-def} (\ref{it:restr}) and (\ref{it:comp}), this reduces to checking that the function $\VF \setminus \{0\} \to \VF \colon t \mapsto \frac{1}{t}$ is $\exists$-strongly definable. By the definition of $\exists$-simple formulas, it suffices to show that if $\varphi(t,x,\xi)$ is a quantifier-free formula with $x \in \VF^m$ and $\xi \in \RVprod$, then $\varphi(\frac{1}{t},x,\xi)$ is equivalent to an $\exists$-simple formula.

We observe that the $\exists$-simple formula $\varphi(t,x,\xi)$ does not include any valued field quantifiers, and all $\VF$-terms are polynomials in $t$ and $x$. Additionally, for any $y \in K$, we have $y=0$ if and only if $\rv(y) = 0$. Hence, we may assume that every occurrence of the variable $t$ is inside a term of the form $\rv_N(f(t,x))$, where $f(t,x)$ is a polynomial.

For each such $f(t,x)$, there is some positive integer $d$ such that
\[(\rv_N t)^d \rv_N (f(\frac{1}{t},x)) =  \rv_N g(t,x)\]
for some polynomial $g(t,x)$. Suppose for simplicity that $t$ only occurs in a single term of the form $\rv_N f(t,x)$, then we find some $\exists$-simple formula $\psi(t,x,\xi,\zeta)$ such that $\varphi(\frac{1}{t},x,\xi)$ is equivalent to
\[ \exists \zeta \in \RV_N ( t \neq 0 \land  (\rv_N t)^d \cdot \zeta  = \rv_N g(t,x) \land \psi(t,x,\xi,\zeta) ). \]
For arbitrary $\exists$-simple formulas, we iterate this procedure.
\end{proof}
In the proofs in Section \ref{sec:decomp}, we will sometimes write down formulas that are, strictly speaking, not $\exists$-simple formulas (and not even $\lang$-formulas).
It will be clear from the context that such appearing conditions can equivalently be rewritten as (less transparent) $\exists$-simple formulas.
We illustrate two special cases in the lemma below.
\begin{lemma} \label{le:rewrite}
Let $N, M$ be positive integers. For each of the conditions below there exists an $\exists$-simple $\lang$-formula $\varphi(a,b,c)$ such that, for all valued fields $K \models T$ and all choices of $a,b,c$, that condition is equivalent to $K \models \varphi(a,b,c)$.
\begin{enumerate}
	\item $\ord a > \ord b + \ord c$ for $a,b,c$ in $K$ (or in $\RV_N$),
	\item $a = \rv_{N}(b  + c)$, with $a \in \RV_N$ and $b,c \in RV_{NM}$ (see Remark \ref{re:plus-singleton}).
\end{enumerate}
\end{lemma}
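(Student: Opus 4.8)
The plan is to treat each of the two conditions separately, reducing them to the primitive relations $\mid$ and $\oplus$ that are already part of the language $\LRV$, using the characterizations of partial addition recorded in Lemmas \ref{le:RV-equality} and \ref{le:RV-add}. In both cases the resulting formula will in fact be quantifier-free in the auxiliary (i.e.\ $\RV$) variables once a few existential $\RV$-quantifiers are allowed, so it is $\exists$-simple.

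For part (1), the case with $a,b,c \in \RV_N$ is the cleanest: I would introduce an auxiliary variable $\xi \in \RV_N$, express ``$\ord \xi = \ord b + \ord c$'' by $\xi = b\cdot c$ when $b,c$ are nonzero (using the multiplicative group structure, so this is atomic), handle the degenerate cases $b = 0$ or $c = 0$ by a finite disjunction, and then write $\ord a > \ord \xi$ as ``$\xi \mid a \land \lnot(a \mid \xi)$'', i.e.\ strict divisibility, which is quantifier-free in $\LRV$. For $a,b,c \in K$, I would first pass to $\rv_N(a),\rv_N(b),\rv_N(c) \in \RV_N$ via the function symbols $\rv_N$ (noting $\ord$ on $K$ factors through $\ord$ on $\RV_N$, as recalled in Section \ref{sec:conventions}, and that $\ord a > \ord b + \ord c$ only depends on these images together with whether each of $a,b,c$ is zero), and then apply the $\RV_N$ case.

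For part (2), I would use Lemma \ref{le:RV-add} (or directly Lemma \ref{le:RV-equality}): the statement $a = \rv_N(b+c)$ with $b,c \in \RV_{NM}$ means exactly that $\oplus(b,c,\eta)$ holds for some $\eta \in \RV_{NM}$ with $\rv_{N,NM}(\eta) = a$ \emph{and} that the sum is a singleton modulo $\rv_N$, equivalently $0 \notin \sum$ at level $\RV_N$, equivalently (by the remark following Lemma \ref{le:RV-add}) $\ord(\text{sum}) \leq \gamma$ for some $\gamma$. Concretely I would write
\[
\varphi(a,b,c) \;:=\; (\exists \eta \in \RV_{NM})\bigl(\oplus(b,c,\eta) \land \rv_{N,NM}(\eta) = a\bigr) \;\land\; \lnot(\exists \eta' \in \RV_{NM})\bigl(\oplus(b,c,\eta') \land \rv_{N,NM}(\eta') = 0\bigr),
\]
and then observe that the negated existential statement, although written with a quantifier, is equivalent to the quantifier-free condition ``$0 \notin \rv_N B(\cdot)$'' which by Lemma \ref{le:RV-add} is expressed by a divisibility; alternatively one simply notes that ``the $\rv_N$-sum is a singleton'' can be phrased purely with $\mid$ on $\RV_N$ applied to the minimum-order summand, which is atomic. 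Either way the formula uses only symbols of $\LRV$ and a single existential $\RV$-quantifier, hence is $\exists$-simple. One must be a little careful that negating an existential $\RV$-quantifier is still allowed: this is fine, since $\exists$-simple only forbids \emph{valued field} quantifiers, and in any case the negation here can be rewritten quantifier-free as just indicated.

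The main obstacle I expect is purely bookkeeping rather than conceptual: making sure that the ``singleton'' side condition in part (2) (i.e.\ that $b+c$ does not collide with $0$ at precision $N$) is written in a genuinely $\exists$-simple — ideally quantifier-free in $\RV$ — way, so that its negation does not secretly reintroduce a problematic quantifier, and correctly enumerating the finitely many degenerate cases ($b=0$, $c=0$, $a=0$) in part (1). Both are handled by the observation, already implicit in the discussion after Lemma \ref{le:RV-add}, that ``$0 \notin \sum_i \xi_i$'' is equivalent to $\ord(\sum_i \xi_i)$ being a singleton, which is a divisibility condition on the $\xi_i$ and hence atomic.
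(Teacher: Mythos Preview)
Your treatment of part (1) matches the paper's exactly: write $\ord a > \ord b + \ord c$ as $bc \mid a \land \lnot(a \mid bc)$ (your auxiliary variable $\xi$ is superfluous, since $b\cdot c$ is already a term), and apply $\rv_N$ first when the arguments lie in $K$.

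For part (2) there is a genuine gap. First, the parenthetical claim that ``$\exists$-simple only forbids valued field quantifiers'' is false: by Definition~\ref{def:simple} an $\exists$-simple formula must be \emph{existential}, so a universal $\RV$-quantifier (equivalently, a negated $\exists\eta$) is not allowed unless you genuinely rewrite it quantifier-free. More seriously, your proposed quantifier-free rewriting of the singleton condition is wrong. Your displayed formula's second conjunct asserts $0 \notin b+c$ in $\RV_{NM}$ (note $\rv_{N,NM}(\eta')=0$ forces $\eta'=0$), which amounts to $\ord(\tilde b + \tilde c) \le \min(\ord b,\ord c) + \ord(NM)$. But ``$\rv_N(b+c)$ is a singleton'' is the strictly stronger condition $\ord(\tilde b + \tilde c) \le \min(\ord b,\ord c) + \ord M$. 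For instance, with $K=\QQ_2$, $N=M=2$, $b=\rv_4(1)$, $c=\rv_4(3)$, one has $0\notin b+c$ in $\RV_4$ yet $\rv_2(b+c)$ contains both $\rv_2(4)$ and $\rv_2(12)$, so your formula holds for $a=\rv_2(4)$ while the intended condition fails. The remark after Lemma~\ref{le:RV-add} you invoke concerns sums \emph{within a fixed} $\RV_N$, not the projection $\rv_N$ of a sum computed in $\RV_{NM}$, and your ``divisibility on the minimum-order summand'' alternative does not repair this.

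The paper's fix is clean: the singleton condition is precisely $\rv_M(b)\neq -\rv_M(c)$, which is the quantifier-free atom $\lnot\,\oplus(\rv_M(b),\rv_M(c),0)$. Conjoin this with your first (correct) conjunct $(\exists \eta\in\RV_{NM})(\oplus(b,c,\eta)\land \rv_{N,NM}(\eta)=a)$ and you are done.
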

\begin{proof}
We start by considering the first condition, in the case that $a,b,c \in \RV_N$.
Then an equivalent (quantifier-free) formula is the conjunction of $b c | a$ with $ \neg (a | b c )$.
In the case where some of the $a,b,c$ belong to $K$, we first apply $\rv_N(\cdot)$.

The second condition can be split up into two parts. First, it asserts that there is a unique element in $\rv_N(b + c)$, which is equivalent to $\rv_M(b) \neq -\rv_M(c)$. This can in turn be rewritten as $\neg \oplus(\rv_M(b),\rv_M(c),0)$. %technically, this is equivalent to E c' : \oplus(\rv_M c,c',0)
Second, it implies that $a \in \rv_N(b + c)$, which is expressed by the formula
\[ \exists d \in \RV_{NM} ( \oplus(b,c,d) \land \rv_N(d) = a ) . \qedhere \]
\end{proof}
\subsection{Cells}
Our definitions for cells are quite similar to the cells from \cite[Sec.~7]{CL08}. The main difference is that we ask all our data to be $\exists$-simple (and not just definable).
\begin{notation}
Consider definable sets $X,Y,Z$ with $Z \subseteq X \times Y$. For $y \in Y$, write $Z(y)$ for the fiber of $Z$ at $y$, as follows:
\[ Z(y) \coloneqq  \{ x \in X \mid (x,y) \in Z  \} .\]
\end{notation}
\begin{definition}[Cells] \label{def:cells}
Let $U$ be any definable set. An $\exists$-simple set $Z \subseteq \VF \times U$ is called an \emph{$\exists$-simple cell with presentation} $(\lambda,Z_D)$ if the following two conditions hold
\begin{enumerate}
	\item $Z_D \subseteq Z \times \RVprod \subseteq \VF \times U \times \RVprod$ is of the form
	\[ Z_D = \{ (t,x,\xi) \mid (x,\xi) \in D \land \rv_N(t - c(x,\xi)) \in R(\xi) \},\]
	where $D$ and $R$ are $\exists$-simple sets, $c \colon D \to \VF$ is $\exists$-strongly definable and either $\emptyset\not= R(\xi) \subseteq \RV_N^{\times}$ for all $(x,\xi) \in D$, or $R(\xi) = \{0\}$ for all $(x,\xi) \in D$.
	
	\item $\lambda \colon Z \to Z_D$ is an $\exists$-strongly definable bijection, commuting with the projection onto $\VF \times U$.
\end{enumerate}
If $R(\xi) \subseteq \RV_N^{\times}$, then $Z$ is called a \emph{$\exists$-1-cell}, and, if $R(\xi) = \{0\}$, then $Z$ is called a \emph{$\exists$-0-cell}. The function $c$ is called the \emph{center} of the cell and the positive  integer $N$ is called the \emph{depth} of the cell. The $\exists$-simple set $D$ is called the \emph{base} of $Z_D$.
\end{definition}
\begin{remark}
Any $\exists$-simple cell $Z \subseteq \VF \times U$, with presentation $(\lambda,Z_D)$, can be rewritten as a disjoint union of fibers of $Z_D$:
\[ Z = \bigsqcup_{\xi \in \RVprod} Z_D(\xi). \]
Conversely, if $Z \subseteq \VF \times U$ is given as such a disjoint union (of fibers of such $Z_D$), then we can define $\lambda \colon Z \to Z_D$ by asking that $\lambda(t,u)$ is the unique tuple $(t,u,\xi)$ such that $(t,u) \in Z_D(\xi)$. As $\lambda$ has an $\exists$-simple graph and $\xi \in \RVprod$, it is $\exists$-strongly definable, by Lemma~\ref{le:strongly-def} (\ref{it:map-to-aux}). We see that $Z$ is a cell, with presentation $(\lambda,Z_D)$.
\end{remark}
The following theorem will be our main tool for the positive existential uniform $p$-adic integration in Section \ref{sec:p-E-int}; it is our key technical result.
The different notions in this statement are introduced in Section \ref{sec:modtheo-conventions} and Definition \ref{def:cells} (which builds on Definitions \ref{def:simple} and \ref{def:strongly-def})
\begin{theorem}[Cell decomposition] \label{th:CDIII}
Let $Y \subseteq \VF \times (\VF^m \times \RVprod)$ be an existentially definable set. Then $Y$ is the disjoint union of finitely many $\exists$-simple cells.
\end{theorem}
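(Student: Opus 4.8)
The strategy is the classical Denef--Pas induction on the number of valued-field variables, but carried out with constant vigilance that every set and function produced along the way stays $\exists$-simple (resp.\ $\exists$-strongly definable). First I would reduce to the case where $Y$ is actually $\exists$-simple itself: by Theorem \ref{th:E-reductionv1} ($\exists$-VF elimination) any existential $\LRV$-formula is equivalent modulo $\THeno$ (hence modulo $T$) to an $\exists$-simple one, and the one extra valued-field variable $t$ in $\VF \times (\VF^m \times \RVprod)$ is allowed to survive (it is a free variable of $Y$, not a quantified one). So from now on $Y(t,x,\xi)$ is given by $(\exists \eta \in \RV_{\bar m})\,\psi(t,x,\xi,\eta)$ with $\psi$ quantifier-free, and after absorbing $\eta$ into the $\RV$-parameters we may as well take $Y$ itself quantifier-free in $\LRV$, viewed as a subset of $\VF \times U$ with $U = \VF^m \times \RVprod$.

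\textbf{The core decomposition.} Next I would run the one-variable cell decomposition in the variable $t$. A quantifier-free $\LRV$-formula in $t$ (with parameters) is a Boolean combination of atoms; after clearing denominators every atom involving $t$ has, by inspection, the shape $P_{N,d}(\dots)$, $\rv_N(g(t,x)) \in S$, or a $\mid$-relation or $\oplus$-relation between such leading terms, where the $g$ are polynomials in $t$ over $\VF[x]$. The heart of the argument is a preparation step: one shows that, after partitioning $U$ into finitely many $\exists$-simple pieces, all the polynomials $g_j(t,x)$ occurring can be simultaneously ``prepared'' — i.e.\ on each piece one can factor/translate so that $\rv_N(g_j(t,x))$ is controlled by $\rv_N(t - c(x,\xi))$ for a common $\exists$-strongly definable center $c$ and a common depth $N$. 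This is where Lemma \ref{le:hensel} and the predicates $P_{N,d}$ do their work: locating the roots of the $g_j$ (or of their relevant factors) as Hensel lifts is exactly what lets us split off the ``annulus'' $\rv_N(t-c) \in R(\xi)$ from the rest, and because $P_{N,d}$ \emph{and its negation} are quantifier-free in $\LRV$ (Remark \ref{rem:negP}), the resulting case split stays $\exists$-simple. After preparation, the truth value of each atom on a fiber over $(x,\xi)$ depends only on $\rv_N(t-c(x,\xi)) \in \RV_N$, so $Y$ restricted to each piece becomes a finite disjoint union of sets of the exact form $\{(t,x,\xi) \mid (x,\xi) \in D \land \rv_N(t-c(x,\xi)) \in R(\xi)\}$, possibly together with a ``bad locus'' $\{t = c(x,\xi)\}$ which is itself a $0$-cell (with $R(\xi) = \{0\}$) after a trivial reparametrisation. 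The bijection $\lambda$ is then manufactured as in the remark following Definition \ref{def:cells}, using Lemma \ref{le:strongly-def}(\ref{it:map-to-aux}) to see it is $\exists$-strongly definable; disjointness of the finitely many cells is arranged by intersecting with complements, which preserves $\exists$-simplicity via Lemma \ref{le:boolean_ops} for the base sets $D$ and the fact that the cell conditions on $t$ are themselves $\exists$-simple by Lemma \ref{le:rewrite}.

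\textbf{Iteration and the main obstacle.} The statement as phrased has only one free $\VF$-variable $t$ singled out, so in fact no outer induction on $\VF^m$ is needed — the $x = (x_1,\dots,x_m)$ sit inside the parameter set $U$ throughout, and the whole argument is ``one pass'' in $t$. (If one wanted the more classical many-variable cell decomposition, one would peel off $x_m$, apply the one-variable result, and recurse, absorbing the new $\RV$-data each time; that variant is not what is asked here.) The main obstacle is the preparation step: ensuring that a \emph{single} center $c$ and \emph{single} depth $N$ work simultaneously for all the finitely many polynomials $g_j$, while keeping $c$ $\exists$-strongly definable and keeping every intermediate partition $\exists$-simple. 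The delicate point is that the natural way to describe the roots of the $g_j$ — ``the root closest to $t$'' — could require algebraic closure and thus a genuine $\VF$-quantifier; it is precisely to sidestep this that the $P_{N,d}$ predicates are built into $\LRV$ in the first place, so that the relevant Hensel lifts are named by quantifier-free atoms rather than conjured by $\exists$-quantification over $\VF$. Managing the combinatorics of ``which factor of which $g_j$ is the dominant one over this piece of $D$'' — each such choice legitimate because it is cut out by $\mid$-inequalities among leading terms, which are quantifier-free by Definition \ref{def:divRV} — is the bookkeeping-heavy but conceptually routine remainder of the proof.
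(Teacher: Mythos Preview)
Your plan has the right two ingredients—the one-variable preparation/cell decomposition for $\exists$-simple sets, and the passage from ``existentially definable'' to ``$\exists$-simple''—but the order in which you invoke them is circular. You open by citing Theorem \ref{th:E-reductionv1} to reduce an arbitrary existential $Y$ to an $\exists$-simple one, and only afterwards run the core decomposition. In the paper's logical architecture, however, Theorem \ref{th:E-reductionv1} (and the more general Theorem \ref{th:E-reduction}) is \emph{derived from} the cell decomposition for $\exists$-simple sets (Proposition \ref{prop:CDIII}): one eliminates a single $\VF$-quantifier $\exists t$ by decomposing the quantifier-free matrix into cells in the $t$-variable and observing that the projection of each cell onto $U$ equals the projection of its base $D \subseteq U \times \RVprod$, which is $\exists$-simple. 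So $\exists$-VF elimination is not available as a black box at the start of your argument.

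The fix is to reverse the order. First establish cell decomposition for $\exists$-simple inputs—this is exactly the content of your ``core decomposition'' paragraph, and is what the paper packages as Proposition \ref{prop:CDIII}, resting on the mutual induction on the $t$-degree $d$ in Propositions \ref{prop:CDI} and \ref{prop:CDII}. Then use that to prove Theorem \ref{th:E-reduction}, deduce Corollary \ref{cor:exists_is_simple} (existential $=$ $\exists$-simple), and only then conclude Theorem \ref{th:CDIII} for arbitrary existential $Y$. Your sketch of the preparation step is along the right lines, though the paper organises it as an induction on degree rather than a one-shot simultaneous preparation; the role of $P_{N,d}$ is, as you correctly identify, to keep both sides of the Hensel-lift case split quantifier-free (Remark \ref{rem:negP}). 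One minor inaccuracy: ``absorb $\eta$ into the $\RV$-parameters so $Y$ becomes quantifier-free in $\VF \times U$'' is not quite what happens—the quantified $\RV$-variables do not live in $U$, and the paper instead allows the residual $\RV$-quantifiers to survive inside the $\exists$-simple description of the base $D$ of each cell.
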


In the next section we will prove Theorem \ref{th:CDIII}, or rather, a special case. In Section \ref{sec:qe}, we will derive quantifier elimination results from that special case of Theorem \ref{th:CDIII} from which the full Theorem \ref{th:CDIII} and an easy description of $\exists$-strongly definable functions will follow, see Theorems \ref{th:E-reductionv1},  \ref{th:E-reduction} and  Corollaries \ref{cor:exists_is_simple}, \ref{cor:simple_is_strong}. In Section \ref{sec:descent} we will then obtain our main goals about descent, again essentially using Theorem \ref{th:CDIII}.

\section{Cell decomposition} \label{sec:decomp}
%
%We essentially prove Proposition~\ref{th:CDIII} by induction on the maximal degree of the polynomials occuring in a formula for $Z$.
To prove Theorem \ref{th:CDIII}, we closely follow and refine Denef's and Pas's strategy of \cite{Den86,Pas90}, the main differences being that we use a language with $\RV$-sorts instead of angular component maps, and, that we finely control quantifiers throughout the whole proof. Indeed, we have to be careful to only introduce existential quantifiers over $\RV_N$ throughout the entire procedure.
%Compare also to \cite{Flen11}, which proves quantifier elimination relative to the $\RV$-sorts, but does give control over the existential quantifiers, nor does the proof include cell decomposition.
%
We inductively prove the following two propositions, for integers $d >0$.
\begin{proposition}[Statement (I)$_d$] \label{prop:CDI}
Let $N$ be a positive integer, $X$ be an $\exists$-simple set, $Z \subseteq \VF \times X$ an $\exists$-simple cell and $f(t,x)$ a polynomial of degree at most $d$ in the $\VF$-variable $t$, whose coefficients are $\exists$-strongly definable functions in $x \in X$.

Then there exist an integer  $q>0$ and a partition of $Z$ into finitely many $\exists$-simple cells, such that on each cell $\tilde{Z} = \bigsqcup_{\xi} \tilde{Z}_D(\xi)$, with center $c(x,\xi)$, the following holds: if we write
\[ f(t,x) = \sum_{i =0}^d a_i(x,\xi) (t-c(x,\xi))^i, \]
then
\[ \rv_N(f(t,x))  = \rv_{N} \left( \sum_{i =0}^d \rv_{N q}( a_i(x,\xi)  (t-c(x,\xi))^i ) \right). \]
\end{proposition}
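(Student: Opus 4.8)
The plan is to run an induction on $d$, following the Denef--Pas recipe while keeping careful track that every quantifier introduced ranges over the $\RV$-sorts only. I will fix the cell $Z = \bigsqcup_\xi Z_D(\xi)$ with presentation $(\lambda, Z_D)$, center $c(x,\xi)$ and depth $N_0$, and set $s = t - c(x,\xi)$; after composing with the $\exists$-strongly definable bijection $\lambda$ and rescaling, I may work on a fiber where $\operatorname{rv}_{N_0}(s)\in R(\xi)$ is prescribed, and where the coefficients $a_i(x,\xi)$ of $f(t,x)=\sum_{i=0}^d a_i(x,\xi)s^i$ are $\exists$-strongly definable in $(x,\xi)$ by Lemma~\ref{le:strongly-def}. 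The goal on each refined cell is the identity $\operatorname{rv}_N(f(t,x)) = \operatorname{rv}_N\big(\sum_i \operatorname{rv}_{Nq}(a_i(x,\xi)s^i)\big)$; by Lemma~\ref{le:RV-add} this amounts to saying that the ball $B(f(t,x), \min_i \operatorname{ord}(a_i s^i) + \operatorname{ord}(Nq))$ is well-defined and determined, i.e.\ that the term of minimal valuation among the $a_i s^i$ dominates the error coming from the rounding $a_i\mapsto \operatorname{rv}_{Nq}(a_i)$ and $s^i \mapsto \operatorname{rv}_{Nq}(s^i)$. So the combinatorial heart is to partition $Z$ into finitely many $\exists$-simple cells on each of which there is a \emph{unique} dominant index $i_0$, with a controlled valuative gap to the others.

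The induction step is where the Hensel-lift predicates $P_{N,d}$ enter. The key reduction is: it suffices to handle the derivative $f'(t,x)$, a polynomial of degree $\le d-1$, by applying Statement (I)$_{d-1}$ to it (after clearing denominators to stay with $\exists$-strongly definable coefficients, using Lemma~\ref{le:conditions}); this yields a finite $\exists$-simple cell partition on which $\operatorname{rv}(f')$ is given by its leading-term expansion. On such a piece, for $t$ ranging over a fixed $\operatorname{rv}_M$-fiber, $\operatorname{ord} f(t,x)$ is controlled by either (a) the ``generic'' case where $\operatorname{ord} f(t,x) = \min_i \operatorname{ord}(a_i s^i)$ already has a strict minimum---then the desired identity is immediate from Lemma~\ref{le:RV-add} after choosing $q$ large relative to the possible gaps---or (b) the ``critical'' case where two or more terms collide in valuation. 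In case (b) one uses that a root of $f$ (equivalently a point where $f$ has anomalously large valuation) forces $\operatorname{ord} f'$ to be correspondingly small, via Lemma~\ref{le:hensel}; the predicate $P_{N,d}$ precisely detects, \emph{quantifier-freely in both directions} (Remark~\ref{rem:negP}), whether such a Hensel lift $\xi$ of the critical $\operatorname{rv}_N(s)$ exists. This is exactly reduction step (\ref{it:reduction_PNd}) alluded to: one splits the critical fiber into the $\exists$-simple subset where $P_{N',d}$ holds (where $f$ has a root, so $f = (t-x_0)\tilde f$ with $x_0$ an $\exists$-strongly definable function---here I use that the root from Lemma~\ref{le:hensel} is given by a $P$-predicate hence its graph is $\exists$-simple, so division by $t-x_0$ lowers the degree and invokes (I)$_{d-1}$ again) and the complementary $\exists$-simple subset where $f$ has no root on that fiber and $\operatorname{ord} f$ is therefore bounded, reducing once more to the generic case.

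I expect the main obstacle to be the bookkeeping that forces \emph{every} auxiliary object---the refined centers, the new cell bases $D$, the radius sets $R$, the extracted roots $x_0$, and all the valuative-inequality conditions---to remain $\exists$-simple (existential, no $\VF$-quantifiers) rather than merely definable. In the classical Denef--Pas argument one freely uses quantifier elimination down to the residue/value-group sorts and extracts witnesses without worrying about quantifier form; here one must instead route every such step through the $\exists$-strongly definable calculus of Lemmas~\ref{le:strongly-def}, \ref{le:conditions}, \ref{le:rewrite} and through the predicates $P_{N,d}$ whose negations are quantifier-free by design. In particular, the step ``on the critical fiber, either $f$ has a root or $\operatorname{ord}f$ is bounded'' must be phrased so that the dividing line is $\exists$-simple, which is exactly what $P_{N,d}$ buys us and why it was added to the language; verifying that the two resulting pieces really are $\exists$-simple cells (and that the extracted root is $\exists$-strongly definable) is the delicate point. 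The choice of the integer $q$ is then a routine finiteness bound, large enough to swallow all finitely many valuative gaps produced across the finitely many pieces of the partition.
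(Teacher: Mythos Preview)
Your overall architecture is correct and matches the paper: induct on $d$, apply (I)$_{d-1}$ to $f'(t,x)$ to control the derivative, then use the predicate $P_{M,d}$ to split the cell into a ``non-root'' piece (where $\ord f(t,x)$ is bounded by $\min_i\ord(a_i s^i)+\ord M^2$, giving the conclusion with $q=M^2$) and a ``root'' piece (where the Hensel lift $x_0$ exists).

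There is, however, a genuine gap in how you handle the root piece. You write that $x_0$ is $\exists$-strongly definable because ``the root from Lemma~\ref{le:hensel} is given by a $P$-predicate hence its graph is $\exists$-simple''. The graph is indeed $\exists$-simple, but the implication from $\exists$-simple graph to $\exists$-strongly definable function is precisely Corollary~\ref{cor:simple_is_strong}, which is proved \emph{after} cell decomposition (it relies on Theorem~\ref{th:E-reduction}, hence on (I)$_d$ itself). You cannot invoke it inside the induction. This is not a minor bookkeeping issue: in the paper, proving that the root $d(x)$ is $\exists$-strongly definable is the longest and most delicate part of the argument. It is done by hand, using Euclidean division by $f(t,x)$ to reduce any $\exists$-simple condition $\varphi(d(x),y,\xi)$ to one involving only polynomials in $t$ of degree $<d$, and then invoking (II)$_{d-1}$ to prepare those polynomials. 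Your proposal never mentions (II)$_{d-1}$; the paper's induction scheme is ``(I)$_d$ from (I)$_{d-1}$ and (II)$_{d-1}$'', and the second hypothesis is essential exactly here.

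A secondary difference: on the root piece you propose to factor $f=(t-x_0)\tilde f$ and reapply (I)$_{d-1}$ to $\tilde f$. The paper instead takes $x_0$ as the \emph{new center} of the cell and Taylor-expands $f$ around $x_0$; since the constant term vanishes and the Hensel estimate from (I)$_{d-1}$ on $f'$ forces the linear term $f'(x_0,x)(t-x_0)$ to dominate all higher-order terms, the (I)$_d$ identity follows directly with $q=1$. Your factoring route would still need to reconcile the center coming from $\tilde f$'s decomposition with the factor $(t-x_0)$ to recover the specific partial-sum form required by (I)$_d$ (not merely the (II)-type conclusion $\rv_N f = h(\rv_{Nq}(t-c),\ldots)$); this is doable but requires (II)$_{d-1}$ again, and in any case presupposes the $\exists$-strong definability of $x_0$ that you have not yet established.
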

\begin{remark}
Since the coefficients $a_i(x,\xi)$ are related to the original coefficients of $f(t,x)$ by Euclidean division, they are $\exists$-strongly definable functions on $D$, by Lemma~\ref{le:strongly-def} (\ref{it:ring}).
\end{remark}
\begin{proposition}[Statement (II)$_d$] \label{prop:CDII}
Let $N$, $X$ and $Z$ be as in Proposition \ref{prop:CDI} and let $f_1(t,x),\dots,f_r(t,x)$ be polynomials in $t$ of degree at most $d$, whose coefficients are $\exists$-stronlgy definable functions in $x \in X$. Then there exist an integer $q>0$ and a partition of $Z$ into finitely many $\exists$-simple-cells, such that on each cell $\tilde{Z} = \bigsqcup_{\xi} \tilde{Z}_D(\xi)$, with center $c(x,\xi)$,
\[ \rv_N(f_j(t,x)) = h_j(\rv_{N q }(t - c(x,\xi)), x , \xi) ), \]
for certain $\exists$-strongly definable functions $h_{j}(\zeta, x,\xi)$, for $j = 1,\dots,r$.
\end{proposition}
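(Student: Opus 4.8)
The plan is to prove Proposition~\ref{prop:CDII} by induction on the number $r$ of polynomials, feeding each $f_j$ into Proposition~\ref{prop:CDI} (Statement~(I)$_d$). For the base case $r=1$, apply (I)$_d$ to $f_1$ and $Z$, obtaining $q$ and a partition of $Z$ into $\exists$-simple cells $\tilde Z=\bigsqcup_\xi \tilde Z_D(\xi)$ with center $c$ on which $\rv_N(f_1(t,x))=\rv_N\big(\sum_{i=0}^d \rv_{Nq}(a_i(x,\xi)(t-c)^i)\big)$. Multiplicativity of $\rv_{Nq}$ rewrites the right-hand side as $\rv_N\big(\sum_i \rv_{Nq}(a_i(x,\xi))\cdot\rv_{Nq}(t-c)^i\big)$, so it depends only on $\zeta:=\rv_{Nq}(t-c)$ and, through the $\exists$-strongly definable functions $\rv_{Nq}(a_i(x,\xi))$ (Lemma~\ref{le:strongly-def}), on $(x,\xi)$. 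Setting $h_1(\zeta,x,\xi):=\rv_N\big(\sum_i \rv_{Nq}(a_i(x,\xi))\zeta^i\big)$, its graph is $\exists$-simple by Lemmas~\ref{le:rewrite} (to express the iterated partial sum and the outer $\rv_N$), \ref{le:boolean_ops} and~\ref{le:strongly-def}, and it is single-valued on the relevant locus since $h_1(\rv_{Nq}(t-c),x,\xi)$ there equals the leading term of the single field element $f_1(t,x)$; hence $h_1$ is $\exists$-strongly definable by Lemma~\ref{le:strongly-def}(\ref{it:map-to-aux}), which gives (II)$_d$ for $r=1$.

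For the inductive step, assume (II)$_d$ for $r-1$ polynomials, apply it to $f_1,\dots,f_{r-1}$ and $Z$ to obtain a partition into $\exists$-simple cells each with center $c$ on which $\rv_N(f_j)=h_j(\rv_{Nq_0}(t-c),x,\xi)$ for $j<r$, and fix such a cell $\hat Z$. Re-expand $f_r$ around $c$ by Euclidean division --- the coefficients stay $\exists$-strongly definable by Lemma~\ref{le:strongly-def}(\ref{it:ring}) --- and apply (I)$_d$ to this re-expansion, refining $\hat Z$ and producing a possibly new center $c'$ and precision $q_r$ on which, as in the base case, $\rv_N(f_r)$ is an $\exists$-strongly definable function of $\rv_{Nq_r}(t-c')$ and $(x,\xi)$. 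What remains --- and what I expect to be the crux --- is to merge the two centers $c$ and $c'$ on a refinement of $\hat Z$.

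Set $\gamma_0:=\ord(c-c')$, an $\exists$-strongly definable value-group element (if $c=c'$ there is nothing to do), put $M:=Nq_0q_r$, and partition $\hat Z$ by comparing $\ord(t-c)$ with $\gamma_0$, which is $\exists$-simple by Lemma~\ref{le:rewrite}. On $\ord(t-c)<\gamma_0-\ord M$ one has $\rv_M(t-c')=\rv_M(t-c)$ (Lemma~\ref{le:RV-equality}), so keeping center $c$ the $h_j$ are unchanged and $\rv_N(f_r)$ becomes an $\exists$-strongly definable function of $\rv_{Nq_r}(t-c)$; on $\ord(t-c)>\gamma_0+\ord M$ one has $\rv_M(t-c')=\rv_M(c-c')$, a constant, so $\rv_N(f_r)$ is constant and again center $c$ works. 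The leftover annulus $\abs{\ord(t-c)-\gamma_0}\le\ord M$ has bounded valuative width, and I would adjoin to the base the leading-term coordinate $\zeta_0:=\rv_P\big(\frac{t-c}{c-c'}\big)$ for a fixed multiple $P$ of $M$ chosen large enough; then $\rv_{Nq_0}(t-c)=\rv_{Nq_0}(c-c')\cdot\rv_{Nq_0}(\zeta_0)$ is an $\exists$-strongly definable function of $(\zeta_0,x,\xi)$. Unless $\zeta_0$ has valuation $0$ and residue $-1$, the sum $t-c'=(t-c)+(c-c')$ has no cancellation, so by Lemma~\ref{le:RV-add} $\rv_{Nq_r}(t-c')$ --- hence $\rv_N(f_r)$ --- is likewise a function of $(\zeta_0,x,\xi)$ and center $c$ works; in the exceptional subcase one has $\ord(t-c')>\gamma_0$, so the reverse passage from $c'$ to $c$ has no cancellation, $\rv_{Nq_0}(t-c)$ is an $\exists$-strongly definable function of $\rv_{Nq_0q_r}(t-c')$ and $(x,\xi)$, and keeping center $c'$ the $h_j$ transfer by composition while $\rv_N(f_r)$ is already prepared at $c'$. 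Taking $q$ a common multiple of all the precisions that appear finishes the induction; throughout, Lemmas~\ref{le:boolean_ops}, \ref{le:rewrite} and~\ref{le:strongly-def} are what keep each subdivision and each auxiliary function $\exists$-simple.
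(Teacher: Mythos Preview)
Your approach is essentially the paper's: induction on $r$, then on each piece merge the two centers $c$ (from the induction hypothesis for $f_1,\dots,f_{r-1}$) and $c'$ (from (I)$_d$ applied to $f_r$). The only difference is in how the merging is organized. Where you split into four regions --- far below $\gamma_0$, far above $\gamma_0$, an annulus without cancellation, and the cancellation locus --- the paper uses a single dichotomy at the $\rv_Q$-level: on $Z_1\cap Z_2$ with $Q=Nq$, either $\rv_Q(t-c_2)\neq\rv_Q(c_1-c_2)$, in which case
\[
\rv_Q(t-c_1)=\rv_Q\bigl(\rv_{Q^2}(t-c_2)-\rv_{Q^2}(c_1-c_2)\bigr)
\]
and one re-centers at $c_2$ (adding one $\RV_{Q^2}$-parameter $\eta=\rv_{Q^2}(t-c_2)$), or $\rv_Q(t-c_2)=\rv_Q(c_1-c_2)$, in which case $\rv_Q(t-c_2)$ is constant in $t$ and one keeps center $c_1$. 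Your four cases are correct (the precision checks you sketch do go through: in 3a the no-cancellation condition $\rv_1(u)\neq -1$ forces $\rv_{P/(Nq_r)}(u)\neq -1$ since $\ord(u+1)\le 0$, and in 3b the strict inequality $\ord(t-c')>\gamma_0$ makes the reverse sum a singleton), but the paper's two-way split avoids the auxiliary coordinate $\zeta_0=\rv_P((t-c)/(c-c'))$, the annulus bookkeeping, and the unspecified choice of $P$, by comparing leading terms directly rather than raw valuations. Both routes give the same conclusion; the paper's is just shorter.
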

We say that the $f_j(t,x)$ are \emph{prepared} on the cells in the resulting decomposition from Proposition \ref{prop:CDII}, and, also, that this cell decomposition \emph{prepares} the $f_j(t,x)$.
\begin{remark}
It is implicit in the above statement that the $h_j$ have domain
\[ \dom(h_j) = \{ (t - c(x,\xi), x, \xi ) \mid (t,x,\xi) \in \tilde{Z}_D \} .\]
This set is $\exists$-simple because $\tilde{Z}_D$ is $\exists$-simple and $c(x,\xi)$ is $\exists$-strongly definable.
\end{remark}
Throughout the proofs below, we will refer to $\exists$-simple cells simply as cells.
\begin{proof}[Proof of Statement \textup{(I)}$_d$, assuming \textup{(I)}$_{d-1}$ and \textup{(II)}$_{d-1}$]
We may assume that $d \geq 2$ as the case $d =1$ is straightforward.
We apply the induction hypothesis (I)$_{d-1}$ to the derivative of $f(t,x)$ (with respect to $t$). Up to replacing $Z$ by one of the resulting $\tilde{Z}$, we may find some $q_0 \in \NN$ such that on $Z$
\[ \rv_N(f'(t,x)) = \rv_N\left( \sum_{i=1}^d \rv_{N q_0}\left( i a_i(x,\xi) (t - c(x,\xi))^{i-1} \right) \right). \]
In particular, the right-hand side is a singleton and thus
\begin{equation} \label{eq:derivative}
\ord f'(t,x) \leq \min_{i} \ord \left( i a_i(x,\xi) (t-c(x,\xi))^{i-1}  \right)  + \ord q_{0} .
\end{equation}
Let $M =d! q_{0}$ and note that Equation (\ref{eq:derivative}) implies that the first condition of Lemma \ref{le:hensel} is fulfilled (with $M$ instead of $N$). We proceed with several reduction steps.
\begin{enumerate}
	\item We may assume that $Z$ is a $\exists$-1-cell, since else $f(t,x) = a_0(x,\xi)$.
	\item \label{it:hide_reparam} Suppose that $Z$ has presentation $(\lambda,Z_D)$. Note that a partition of $Z_D$ into cells yields a corresponding partition of $Z$. Hence, up to precomposing with $\lambda^{-1}$, we may as well assume that the coefficients of $f(t,x)$ have domain $D$ and that $Z = Z_D$. Effectively, we may omit the reparametrizing variables $\xi$ from our notation.
	\item Since a cell decomposition for $g(y) = \sum_{i=0}^d a_i(x) y^i$ (say with center $\tilde{c}(x,\zeta)$) yields a cell decomposition for $f(t,x)$ (with center $c(x) + \tilde{c}(x,\zeta)$), we may assume that $c(x)$ is identically zero on $D$.
	\item Up to possibly increasing $q_0$, we may assume that $M$ is a multiple of the depth of $Z_D$. Then replace $\lambda$ by the map $(t,x) \mapsto (\lambda(t,x), \rv_M(t))$ and perform the reduction step (\ref{it:hide_reparam}) again, to reduce to the case where $Z$ is of the form
	\[ Z = \{ (t,x) \mid x \in D \land \rv_{M}(t) = \xi(x)\}, \]
	where $\xi(x)$ is simply the projection onto the last coordinate of $X$ (in particular, it is $\exists$-strongly definable). Note that $\xi(x) \in \RV_N^{\times}$ for all $x$, since $Z$ is an $\exists$-1-cell.
	%
%	\item Up to partitioning $D$, we may assume that each $a_i(x)$ is either identically zero, or never zero. Denote by $I \subseteq \{0,\dots,d\}$ the set of nonzero $a_i(x)$.
%	%
%	\item After further partitioning $D$, we may additionally assume that the order type of $\{ \ord( a_{i}(x) t^i) \}_{i \in I}$ is independent of $x \in D$.
%	Indeed an order type can be expressed by a suitable conjunction over formulas expressing that
%	%
%	\[ (i - j) \ord \xi(x) \leq \ord a_j(x) - \ord a_i(x), \]
%	%
%	for $i,j \in I$.
	%
	\item \label{it:reduction_PNd} We may assume that the condition
	\[ P_{M,d}(\xi(x),\rv_{M^2}(a_0(x)),\dots,\rv_{M^2} (a_d(x)) ) \]
 	holds on all of $D$; indeed, by Equation (\ref{eq:derivative}), its negation implies that
	\[\ord f(t,x) \leq \min_i\left( \ord(a_{i}(x) t^i )\right) + \ord(M^2).\]
	This gives the desired conclusion (with $q = M^2$). Note that the part of $D$ where $P_{N,d}$ does not hold is $\exists$-simple, see Remark \ref{rem:negP}.
%	\item Let  $i_0 \in I$ be maximal, such that $\ord(a_{i_0}(x) t^{i_0})$ is minimal among $\ord (a_i(x) t^i )$, for $i \in I$. We may assume that there is a fixed set $I_0 \subseteq I$ containing those $i$ for which
%	%
%	\[ \ord(a_i(x) t^i ) \leq \ord( a_{i_0}(x)  t^{i_0}) + \ord( N) .\]
	%
%	\item Up to dividing $f(t,x)$ by the unit $a_{i_0}(x)$, we may (using Lemma~\ref{le:conditions}) even assume that $a_{i_0}(x) = 1$ and that $\ord a_i(x) \geq 0$, for all $i \in I$.
\end{enumerate}
%
%If $I_0 = \{i_0\}$, then we have $\rv_{N} f(t,x) = \rv_{N}(t^{i_0})$, and we are done.
%\todoi{The above is perhaps not necessary anymore in the current proof}
%
By this last reduction step, we may assume (by Lemma \ref{le:hensel}) that there exists a definable $d \colon D \to \VF$, such that $f(d(x),x) = 0$ and $\rv_M(d(x)) = \xi(x)$, for all $x \in D$. We would like to use $d(x)$ as our new cell center, since we have that
\[ Z = \{ (t,x) \mid  x \in D \land \ord(t - d(x) ) > \ord d(x) + \ord M  \}. \]
Indeed, the condition $\ord(t-d(x)) > \ord d(x) + \ord M$ is equivalent to $\rv_M(t) = \rv_M(d(x))$ and $\rv_M(d(x)) = \xi(x)$ by construction.
As the above description of $Z$ only depends on $x$ and $\rv(t - d(x))$, it is indeed a cell with center $d(x)$.

Now use $\ord(t-d(x)) > \ord d(x) + \ord M$ and Equation (\ref{eq:derivative}) to see that for all $2 \leq k  \leq d$
\begin{align}
	\ord\left(  \frac{f^{(k)}(d(x),x) }{k!} (t - d(x))^k \right) 	&> \min_{i}( \ord a_{i}(x) d(x)^{ i - 1 } (t-d(x)) ) + \ord M, \notag \\
																	&\geq \ord(f'(d(x),x) (t - d(x))). \label{eq:derivative-approximates}
%	\ord \left( \binom{k}{i} a_k(x) d(x)^{k-i} \right) \geq \min_{ 0 \leq i \leq d} \ord  (a_{i_0}(x)  (k-i) \ord d(x)).
\end{align}
Hence, after Taylor expanding $f(t,x)$ around $d(x)$, it follows that
\begin{equation}
 	\rv_N(f(t,x)) = \sum_{k = 1}^d \rv_N\left(\frac{f^{(k)}(d(x),x)}{k!} (t-d(x))^k \right).
\end{equation}
This is our desired conclusion, with $q = 1$.

So all that is left is to prove that the (definable) function $d(x)$ is $\exists$-strongly definable.
Let $\varphi(t,y,\xi)$ be an $\exists$-simple formula, where $y$ ranges over $\VF^m$ and $\xi$ over $\RV_{ \bar{n} }$.
%Since the language on $\VF$ is an expansion of the language of rings by constants, each occurrence of the variable $t$ is (without loss of generality) inside a term of the form $\rv_N(h_i(t,y))$, for some list of polynomials $h_1(t,y),\dots,h_r(t,y)$.
Without loss of generality, each occurrence of the variable $t$ is inside a term of the form $\rv_N(h_i(t,y))$, for some list of polynomials $h_1(t,y),\dots,h_r(t,y)$.
By Euclidean division, we can write each $h_i(t,y)$ as $f(t,x) q_i(t,x,y) + p_i(t,x,y)  $, where $q_i$ and $p_i$ are polynomials in $t$, with $\exists$-strongly definable functions in $(x,y) \in D \times \VF^m$ as coefficients.
As $f(d(x),x) = 0$, we may replace each occurrence of $h_i(d(x),x,y)$ in $\varphi(d(x),x,\xi)$ by $p_i(d(x),x,y)$.

Since the $t$-degree of the $p_i(t,x,y)$ is strictly smaller than that of $f(t,x)$, we may apply (II)$_{d-1}$ to the $p_i(t,x,y)$. Consequently, we may further rewrite $\varphi(d(x),x,\xi)$ as a finite disjunction over formulas of the form
\[ \begin{array}{ll}
	\exists \zeta \in \RV_{\bar{n}'} (	& (x,y,\zeta) \in C \land \rv_{N_1}(d(x) - E(x,y,\zeta)) \in R(\zeta) \\
										&\land \psi( \rv_{N_1}( d(x) - E(x,y,\zeta) ),y,\xi,\zeta) ).	\end{array}\]
Here $N_1$ is a postive integer, $C,R$ are $\exists$-simple sets, $E(x,y,\zeta)$ is an $\exists$-strongly definable function and $\psi$ is an $\exists$-simple formula.
Omitting the variables of the functions for notational ease, it thus suffices to prove that the function $\rv_{N_1}(d - E)$ is $\exists$-strongly definable.

We first show that a condition of the form $\rv_{N_2}(d) = \eta$ can be by expressed by $\exists$-simple formulas, for any $N_2 \in \NN$.
We claim that such a condition is equivalent to the existence of some $\rho \in \RV_{M^2 N_2}$ such that
\[ \rv_{M}(\rho) = \xi(x) \quad \text{ and } \quad 0 \in \sum_{i = 0}^d \rv_{M^2 N_2}(a_i) \rho^i  \quad \text{ and } \quad \rv_{ N_2}(\rho) = \eta  .\]
Indeed, for one implication, we just take $\delta = \rv_{M^2 N_2}(d(x))$. For the other direction, we notice that the above conditions imply that as in Lemma \ref{le:hensel} $\rv_{M N_2}(\rho)$ lifts uniquely to a zero of $f(t,x)$ (namely $d(x)$).

Up to subdividing $C$, we may assume that exactly one of the two following $\exists$-simple conditions holds on all of $C$
\begin{enumerate}
	\item $ \rv_{M N_1}(d) \neq \rv_{M N_1}(E)$. In this case, we have $\rv_{N_1}(d-E) = \rv_{N_1}( \rv_{M N_1^2}(d) - \rv_{M N_1^2}(E) )$. This function is $\exists$-strongly definable, by Lemmas \ref{le:rewrite} and \ref{le:strongly-def}.
	\item $\rv_{M N_1}(d) = \rv_{M N_1}(E)$. In this case, we have $\ord(d-E) > \ord d + \ord(M N_1)$. In particular, $(E,x) \in Z$. By a similar argument as for Equation (\ref{eq:derivative-approximates}), we find that for all $2 \leq k \leq d$
	\[ \ord\left( \frac{f^{(k)}(E,x)}{k!} (E-d)^{k} \right) >  \ord (f'(E,x) (E - d)) + \ord N_1. \]
%	%
%	\begin{align*}
%		\ord\left( \frac{f^{(j)}(E,x)}{j!} (E-d)^{j} \right) 	&\geq \ord a_{i_0} E^{i_0 - j} + j \ord(E - d), \\
%																&> \ord(a_{i_0} E^{i_0-1}) + \ord(M N_1) + \ord(E- d), \\
%																&\geq \ord (f'(E,x) (E - d)) + \ord N_1,
%	\end{align*}
%	where in the last line we used that $\ord(f'(t,x)) \leq \ord(a_{i_0} t^{i_0 -1}) + \ord M$ on $Z$.
%	Thus after Taylor expanding $f(d,x) = 0$ around $E$, we see that
	%
	Thus, after Taylor expanding $f(d,x)  = 0$ around $E$, we find that
	\[  \rv_{N_1}(-f(E,x)) = \rv_{N_1}(f'(E,x) (E - d) ). \qedhere \]
%	%
%	By bringing $\rv_N(f'(E,x))$ to the other side, we see that $\rv_N(d-E)$ is $\exists$-strongly definable.
\end{enumerate}
\end{proof}
\begin{proof}[Proof of Statement \textup{(II)}$_d$ assuming \textup{(I)}$_d$ and \textup{(II)}$_{d-1}$]
We argue by induction on $r \geq 2$. Let $Z_1,Z_2$ be cells with respective presentations $(\lambda_i,Z_{D_i})$ and centers $c_i$ for $i = 1,2$ such that $Z_1$ prepares $f_1(t,x),\dots,f_{r-1}(t,x)$ and $Z_2$ prepares $f_r(t,x)$.
We partition the intersection $Z_1 \cap Z_2$ into smaller cells, until all $f_i(t,x)$ are prepared.
Taking some sufficiently large $q \in \NN$ and further reparametrizing $Z_1,Z_2$ by one $\RV_{Nq}$-variable, we may assume that their intersection can be written as
\[  \begin{array}{ll}
	Z_1 \cap Z_2 =  \bigsqcup_{\xi,\zeta} \{ (t,x) \mid 	& (x,\xi) \in D_1 \land (x,\zeta) \in D_2, \\
																	& \rv_{Nq}(t - c_1(x,\xi)) = \xi_1, \\
																	& \rv_{Nq}(t - c_2(x,\zeta)) = \zeta_1 \}. \end{array} \]
Write $D$ for the $\exists$-simple set containing those $(x,\xi,\zeta)$ for which both $(x,\xi) \in D_1$ and $(x,\zeta) \in D_2$ and set $Q = Nq$. By adding one more condition to $D$, we may assume that $c_1(x,\xi) \neq c_2(x,\zeta)$ on $D$.
We now partition $D$ such that exactly one of the two conditions below holds identically on $D$. For notational convenience, we again omit the arguments of our functions.
\begin{enumerate}
	\item $\rv_Q(t-c_2) \neq \rv_Q(c_1 - c_2)$. In this case, we have
	\begin{equation} \label{eq:rvt-c1}
		\rv_Q(t-c_1) = \rv_Q( \rv_{Q^2}(t-c_2) - \rv_{Q^2}(c_1 - c_2) ) .
	\end{equation}
	After reparametrizing by one additional variable $\eta \in \RV_{Q^2}$, we may rewrite this part of $Z_1 \cap Z_2$ as
	\begin{equation*}  \begin{array}{ll}
	 	\bigsqcup_{\xi,\zeta,\eta} \{ (t,x) \mid & (x,\xi,\zeta) \in D, \\
		& \xi_1 = \rv_{Q}(  \eta + \rv_{Q^2}(c_1 - c_2) ), \\
		& \rv_{Q}(\eta)  \neq \rv_Q(c_1 - c_2), \\
		& \rv_{Q^2}(t - c_2) = \eta
 \}, \end{array} \end{equation*}
	which is a cell (note that we implictly use Lemma \ref{le:rewrite}).
	Moreover, by the equation (\ref{eq:rvt-c1}) all $f_i(t,x)$ are prepared on this cell.
	\item $\rv_Q(t-c_2) = \rv_Q(c_1 - c_2)$. Since $c_1 \neq c_2$, condition is equivalent to
	\[ \ord( (t - c_2) - (c_1 - c_2) ) > \ord(c_1 - c_2) + \ord Q. \]
 	Hence, this part of $Z_1 \cap Z_2$ can be rewritten as
	\begin{equation*} \begin{array}{ll}
		\bigsqcup_{\xi,\zeta} \{ (t,x) \mid & (x,\xi,\zeta) \in D, \\
		& \ord \xi_1 > \ord(c_1 - c_2) + \ord Q, \\
		& \rv_Q(c_1 - c_2) = \zeta_1, \\
		& \rv_{Q}(t - c_1) = \xi_1
		\}. \end{array} \end{equation*}
	This is again a cell. Note that it prepares $f_r$ because of the equality $\rv_Q(t-c_2) = \rv_Q(c_1 - c_2)$. \qedhere
\end{enumerate}
\end{proof}
%
%\begin{proposition}[Cell decomposition] \label{th:CDIII}
%Let $Y \subseteq \VF \times (\VF^m \times \RVprod)$ be an $\exists$-simple set. Then $Y$ is the disjoint union of finitely many $\exists$-simple cells.
%\end{proposition}
%
The following Proposition \ref{prop:CDIII} is slightly weaker than Theorem \ref{th:CDIII} as it only applies to $\exists$-simple sets.
However, in the next section we show that actually all existentially definable sets are $\exists$-simple (Corollary \ref{cor:exists_is_simple}) thus completing the proof of Theorem \ref{th:CDIII}.
\begin{proposition} \label{prop:CDIII}
Let $Y \subseteq \VF \times (\VF^m \times \RVprod)$ be an $\exists$-simple set. Then $Y$ is the disjoint union of finitely many $\exists$-simple cells.
\end{proposition}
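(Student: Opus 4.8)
The plan is to run the classical Denef--Pas deduction of cell decomposition from cell preparation (Propositions~\ref{prop:CDI} and~\ref{prop:CDII}), paying attention at every step that we only ever introduce quantifiers over the $\RV$-sorts, never over $\VF$. Write $X_0 = \VF^m \times \RVprod$; this set is $\exists$-simple (it is cut out by a quantifier-free formula), and $Y \subseteq \VF \times X_0$, with $t$ the distinguished $\VF$-coordinate and $u = (x,\xi_0)$ a point of $X_0$ (so $x \in \VF^m$). Fix an $\exists$-simple formula $(\exists \eta \in \RV_{\bar{m}})\,\psi(t,u,\eta)$ defining $Y$, with $\psi$ quantifier-free. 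The first step is to normalise $\psi$: every equality of $\VF$-terms $s_1 = s_2$ can be rewritten as $\rv_1(s_1 - s_2) = 0$, and in any $\RV$-atomic subformula every $\VF$-subterm already sits inside some $\rv_N$; using $\rv_N(s)\cdot \rv_N(s') = \rv_N(ss')$ and $\rv_{N,M}\circ \rv_M = \rv_N$ one reduces to the situation where $t$ occurs in $\psi$ only inside finitely many subterms $\rv_N(f_1(t,x)),\dots,\rv_N(f_s(t,x))$ for a single modulus $N$, where the $f_j$ are polynomials in $t$ of degree at most $d$ with coefficients in $\ZZ[x]$, hence $\exists$-strongly definable functions of $x$.

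Next I would split $\VF \times X_0$ into the two obvious $\exists$-simple cells (namely $\rv(t) = 0$, resp.\ $\rv(t) \in \RV^{\times}$, both with centre $0$) and apply Proposition~\ref{prop:CDII} to each, preparing $f_1,\dots,f_s$. This yields a partition of $\VF \times X_0$ into finitely many $\exists$-simple cells $\tilde Z$, on each of which (with centre $c(x,\xi)$ and some $q$) one has $\rv_N(f_j(t,x)) = h_j(\rv_{Nq}(t - c(x,\xi)),x,\xi)$ with $h_j$ $\exists$-strongly definable. Reparametrising each $\tilde Z$ by the additional $\RV_{Nq}$-coordinate $\xi'' \coloneqq \rv_{Nq}(t - c(x,\xi))$ --- a standard move, exactly as in step~(4) of the proof of Proposition~\ref{prop:CDI}, and one that preserves $\exists$-simplicity by the calculus of Lemma~\ref{le:strongly-def} --- I may assume that on $\tilde Z$ we have $\rv_N(f_j(t,x)) = g_j$ for an $\exists$-strongly definable function $g_j$ of the base variables only, and likewise $\rv_{N'}(f_j(t,x)) = \rv_{N',N}(g_j)$ whenever $N' \mid N$.

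Finally, on a fixed such cell $\tilde Z$ with presentation $(\lambda, \tilde Z_D)$ and base $D$, I would replace in $\psi$ each subterm $\rv_{N'}(f_j(t,x))$ by $\rv_{N',N}(g_j)$. Since $t$ occurred only inside those subterms, the outcome is a formula $\psi^{\flat}$ in the base variables and $\eta$ alone, obtained by substituting the $\RV$-valued $\exists$-strongly definable functions $g_j$ into the quantifier-free (hence $\exists$-simple) formula that one gets from $\psi$ by replacing those subterms with fresh $\RV$-variables; by Definition~\ref{def:strongly-def} applied to the tuple $(g_j)_j$, which is $\exists$-strongly definable by Lemma~\ref{le:strongly-def}~(\ref{it:comp}) and~(\ref{it:map-to-aux}), the formula $\psi^{\flat}$ is $\exists$-simple. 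Hence $D' \coloneqq \{(u,\xi) \in D \mid (\exists \eta)\,\psi^{\flat}\}$ is $\exists$-simple (Lemma~\ref{le:boolean_ops}), and restricting the presentation of $\tilde Z$ to the base $D'$ (the restricted $\lambda$ is still $\exists$-strongly definable by Lemma~\ref{le:strongly-def}~(\ref{it:restr})) produces an $\exists$-simple cell equal to $Y \cap \tilde Z$. Summing over the finitely many cells $\tilde Z$ of the partition gives $Y = \bigsqcup_{\tilde Z}(Y \cap \tilde Z)$ as a finite disjoint union of $\exists$-simple cells.

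The genuinely substantive ingredient is the cell preparation of Proposition~\ref{prop:CDII}, which is already available; the main point to be careful about --- and the natural pitfall --- is that every manipulation (the normalisation of $\psi$, the reparametrisation, the substitution of the $g_j$, and the restriction of the cell base) stays within the class of $\exists$-simple sets and $\exists$-strongly definable functions, i.e.\ never forces a $\VF$-quantifier. This is precisely what the calculus of Lemma~\ref{le:strongly-def}, together with the automatic $\exists$-strong definability of functions into the $\RV$-sorts with $\exists$-simple graph, is designed to guarantee.
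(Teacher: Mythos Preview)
Your proposal is correct and follows essentially the same route as the paper's own proof: normalise so that $t$ only occurs inside subterms $\rv_N(f_j(t,x))$, apply Proposition~\ref{prop:CDII} to prepare the $f_j$, reparametrise by $\rv_{Nq}(t-c)$, substitute the resulting $\exists$-strongly definable $\RV$-valued functions to obtain an $\exists$-simple condition on the base, and restrict the cell's base accordingly. The only cosmetic differences are that you explicitly split $\VF\times X_0$ into the two trivial starting cells before invoking Proposition~\ref{prop:CDII} (the paper leaves this implicit), and you spell out the justification that the substituted formula $\psi^{\flat}$ is $\exists$-simple via Lemma~\ref{le:strongly-def}, whereas the paper simply asserts it.
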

\begin{proof}
Let $\varphi(t,x,\xi)$ be an $\exists$-simple formula defining $Y$, with $(t,x) \in \VF \times \VF^m$ and $\xi \in \RVprod$.
We may find a finite list of polynomials $f_1(t,x),\dots,f_r(t,x)$ such that each occurrence of $t$ is inside a term of the form $\rv_N(f_i(t,x))$ for some $i \in \{1,\dots,r\}$.
We now apply Proposition \ref{prop:CDII} to the polynomials $f_i(t,x)$. This yields a partition of $\VF \times \VF^m$ into $\exists$-simple cells. After taking a cartesian product with $\RVprod$, we obtain a corresponding decomposition of $\VF \times (\VF^m \times \RVprod)$.

Let $Z = \bigsqcup_{\zeta} Z_D(\zeta)$ be one of the resulting cells, say with center $c(x,\zeta)$. Assume that $Z$ is an $\exists$-1-cell (the case where $Z$ is a $0$-cell is similar, but more straightforward).
We need to show that $Z \cap Y$ can again be written as a cell.
We may assume that $\rv_{N q}(t-c(x,\zeta)) = \zeta_1$ on $Z_D$, possibly after adding one variable to the tuple $\zeta$.
Since $Z$ was obtained from Proposition \ref{prop:CDII}, the restriction of $\rv_N(f_i(t,x))$ to $Z_D$ is an $\exists$-strongly definable function $h_i(\zeta_1,x,\zeta)$. We thus find an $\exists$-simple formula $\psi(x,\xi,\zeta)$  which is equivalent to $\varphi(t,x,\xi)$ whenever $(t,x,\zeta) \in Z_D$
Then $Z \cap Y$ is given by
\[ \bigsqcup_{\zeta}\{ (t,x,\xi) \mid (x,\zeta) \in D \land \psi(x,\zeta,\xi) \land \rv_N(t - c(x,\zeta)) = \zeta_1 \}, \]
and hence it is a cell.
\end{proof}
%
%By allowing additional constants in $\Lvf$, it is possible to prove directly a version of Proposition \ref{prop:CDII} which also holds for, say polynomials $f(t,x) \in \QQ_p[t,x]$. More generally, if $K$ is a field satisfying $\THeno$, and $X \subseteq K^n$ is a subset given by an existential formula, possibly \emph{with parameters}, then we can decompose $X$ into $\exists$-simple cells. Moreover, we can do this uniformly in all field extensions of $K$. More precisely, we remark the following.
%
%\begin{remark}[Cell decomposition with parameters]
%Fix a $p$-adic field $K_0$ and let $\varphi(x,y)$ be an $\exists$-simple $\lang$-formula, in a tuple of variables $(x,y) \in (\VF^{m+1} \times \RVprod) \times \VF^n$.
%For any $a_0 \in K_0^n$ and any $\lang$-embedding $K_0 \subseteq K$, we can consider the sets $\varphi(K,a_0) \coloneqq X_K$.
%
%By cell decomposition, we may find formulas $\psi_1(x,y),\dots,\psi_n(x,y)$ such that each $\psi_i$ defines an $\exists$-simple cell and the resulting cells partition the definable set corresponding to $\varphi(x,y)$. In particular, for any $\lang$-embedding $K_0 \subseteq K$, the sets $\psi_i(K,a_0)$ partition $X_K$.
%\end{remark}
%%
\section{Reduction of existential quantifiers}\label{sec:qe}
We now come to our first application of existential cell decomposition.
The following existential quantifier reduction statement in particular implies that any existentially definable set is already $\exists$-simple (Corollary \ref{cor:exists_is_simple}), thus completing the proof of Theorem \ref{th:CDIII}.
%We formulate the quantifier reduction statement in terms of existential formulas, but an analogous statement for universal formulas follows immediately.
%
\begin{theorem}[$\exists$-VF reduction] \label{th:E-reduction}
Any existential $\lang$-formula is equivalent to an existential $\lang$-formula without any $\VF$-quantifiers.
\end{theorem}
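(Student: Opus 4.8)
The plan is to reduce the general statement to the special case already obtained, namely Proposition \ref{prop:CDIII}. An arbitrary existential $\lang$-formula has the form $(\exists \xi)(\exists t)\,\psi(t,\xi,x)$, where $\psi$ is quantifier-free, $\xi$ is a tuple of $\RV$-variables (possibly running over several sorts $\RV_N$), and $t$ is a tuple of $\VF$-variables. Pulling the $\RV$-quantifiers to the outside does no harm for the conclusion — existential $\RV$-quantifiers are allowed in $\exists$-simple formulas — so the real content is to eliminate a block of $\VF$-quantifiers $\exists t = (t_1,\dots,t_k)$ from a formula that is already $\exists$-simple in the remaining variables. First I would treat a single $\VF$-quantifier $\exists t_1$: the formula $(\exists t_1)\varphi(t_1,x,\xi)$, where $\varphi$ is $\exists$-simple, defines the image under the coordinate projection (forgetting $t_1$) of the $\exists$-simple set $Y = \varphi(\VF\times\dots)$. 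By Proposition \ref{prop:CDIII} (applied with $t_1$ playing the role of the distinguished $\VF$-coordinate), $Y$ is a finite disjoint union of $\exists$-simple cells $Z^{(j)} = \bigsqcup_\zeta Z^{(j)}_D(\zeta)$ in the variable $t_1$. Since a finite union of $\exists$-simple sets is $\exists$-simple (Lemma \ref{le:boolean_ops}), it suffices to show that the projection of one cell $Z = \bigsqcup_\zeta Z_D(\zeta)$ — forgetting $t_1$ — is $\exists$-simple.

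The key point is then purely about cells. Writing $Z_D = \{(t_1,y,\zeta) : (y,\zeta)\in D \land \rv_N(t_1 - c(y,\zeta)) \in R(\zeta)\}$, the fiber over $(y,\zeta)\in D$ is nonempty in $t_1$ exactly when $R(\zeta)\neq\emptyset$, which by the definition of a cell is automatic on $D$ (in the $1$-cell case $\emptyset \neq R(\zeta)\subseteq \RV_N^\times$, and in the $0$-cell case $R(\zeta) = \{0\}$, which is realized by $t_1 = c(y,\zeta)$ since $\rv_N(0) = 0$; in either case the ball/point is nonempty because $K$ is a nontrivially valued field with residue characteristic-zero-style richness coming from $T$). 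Hence the projection of $Z$ forgetting $t_1$ is simply $\{(y,\zeta) : (y,\zeta)\in D\}$ composed with the projection forgetting $\zeta$ — but $\zeta$ is an $\RV$-tuple, so $(\exists \zeta)\big((y,\zeta)\in D\big)$ is again $\exists$-simple, being an $\exists$-simple set with some $\RV$-variables existentially quantified. (One must be slightly careful that the presentation $\lambda$ was an $\exists$-strongly definable bijection $Z \to Z_D$ commuting with the projection to $\VF\times U$, so the image of $Z$ under forgetting $t_1$ coincides with the image of $Z_D$ under forgetting $(t_1,\zeta)$; this is exactly the commuting-with-projection clause of Definition \ref{def:cells}.) Iterating over the coordinates $t_1,\dots,t_k$ of the $\VF$-block — at each stage the formula stays $\exists$-simple, so Proposition \ref{prop:CDIII} applies afresh — eliminates all $\VF$-quantifiers.

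I expect the main obstacle to be bookkeeping rather than a genuine mathematical difficulty: one must verify at each inductive step that the parameter set over which Proposition \ref{prop:CDIII} is invoked is itself $\exists$-simple (it is, by Lemma \ref{le:boolean_ops} and the stability of $\exists$-simple sets under existential $\RV$-quantification), and that no universal or $\VF$-quantifier sneaks back in when one rewrites the projection of a cell. A secondary subtlety is handling the possibility that in the $1$-cell case $R(\zeta)$ could in principle be cut out so that the ball $\{t_1 : \rv_N(t_1 - c) \in R(\zeta)\}$ is empty for some $(y,\zeta)\in D$; but this cannot happen, precisely because the definition of a cell builds in $R(\zeta)\neq\emptyset$ on all of $D$, so after projecting we genuinely recover all of $D$ (then its $\RV$-projection). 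Once these points are checked, the statement follows, and Corollary \ref{cor:exists_is_simple} — that every existentially definable set is $\exists$-simple — is the special case with no free $\RV$-variables remaining to worry about, completing the proof of Theorem \ref{th:CDIII}.
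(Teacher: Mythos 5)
Your proof is correct and follows essentially the same route as the paper's: both reduce to a single $\VF$-quantifier, apply the cell decomposition of Proposition \ref{prop:CDIII} to the $\exists$-simple set it defines, and observe that the projection of a cell forgetting the distinguished $\VF$-variable is the projection of its base $D$, which is $\exists$-simple since $D \subseteq U \times \RVprod$. Your version is, if anything, slightly more careful than the paper's (making explicit the nonemptiness of cell fibers via the $R(\xi)\neq\emptyset$ clause, the role of $\lambda$ commuting with the projection, and the iteration over the $\VF$-block), and it correctly cites Proposition \ref{prop:CDIII} rather than the full Theorem \ref{th:CDIII}, avoiding a circularity that the paper's reference to ``Proposition~\ref{th:CDIII}'' inadvertently risks.
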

\begin{proof}
Since existential quantifiers commute, it suffices to show that if $\psi(t,x,\xi)$ is a quantifier-free formula, then
$$
(\exists t \in \VF)\psi(t,x,\xi)
$$
 is $\exists$-simple.
Let $Z \subseteq \VF \times (\VF^m \times \RVprod)$ be the definable set given by $\psi$.
We need to show that the projection of $Z$ onto $U$ is $\exists$-simple.
By Proposition~\ref{th:CDIII}, we may reduce to the case where $Z$ is a single $\exists$-simple-cell, say with presentation $(\lambda,Z_D)$.
But then the projection of $Z$ onto $U$ equals the projection of $D$ onto $U$.
The latter is $\exists$-simple because $D \subseteq U \times \RVprod$.
\end{proof}
\begin{proof}[Proof of Theorem \ref{th:E-reductionv1}]
This is the an instance of Theorem \ref{th:E-reduction}, with $\lang = \LRV$ and $T = \THeno$.
\end{proof}
\begin{remark}
The fact that the language on $\VF$ is an extension of the ring language by only constants symbols is crucial for Theorem \ref{th:E-reduction}.
Indeed, consider for example  the language $\lang' = \LRV \cup \{ P_2(x) \}$, where $P_2(x)$ is a new unary predicate on the valued field sort $\VF$. Let $T' \supseteq T$ express that $P_2(x)$ is equivalent to $(\forall y \in \VF)(x \neq y^2)$.
Consider the statement $\exists x P_2(x)$ and suppose it was equivalent to some existential $\lang$'-sentence $\varphi$ without any $\VF$-quantifiers. Then $\varphi$ would be an existential sentence in the language $\lang'' = \LRV \cup \{ P_2(n) \}_{n \in \ZZ}$.
Now let $k$ be an algebraically closed field. Then $\exists x P_2(x)$ holds in the valued field $k((t))$ (with valuation ring $k[[t]]$), but not in its algebraic closure $L$.
But as $k((t))$ is an $\lang''$-substructure of $L$ and $\varphi$ is existential we must necessarily have $L \models \varphi$, contradiction. Note that any $\lang'$-formula is still equivalent modulo $T'$ to an $\lang'$-formula without valued field quantifiers, by the classical quantifier elimination results in the style of Pas, see e.g.~\cite{Flen11}.
\end{remark}
\begin{corollary} \label{cor:exists_is_simple}
Any existentially definable set is $\exists$-simple.
\end{corollary}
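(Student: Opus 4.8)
The plan is to reduce the statement to the already-proved instances of cell decomposition. The key point to notice is that Corollary \ref{cor:exists_is_simple} is almost a tautology once we have Theorem \ref{th:E-reduction} in hand: the latter eliminates $\VF$-quantifiers from any existential formula, but an existential formula with only $\RV$-quantifiers is exactly what Definition \ref{def:simple} calls $\exists$-simple. So the first and main step is simply to observe that Theorem \ref{th:E-reduction} applies. Given an existentially definable set $X$, pick an existential $\lang$-formula $\varphi(x)$ defining it, where the tuple $x$ ranges over some product $\VF^m \times \RVprod$. By Theorem \ref{th:E-reduction}, $\varphi(x)$ is equivalent to an existential $\lang$-formula $\varphi'(x)$ with no $\VF$-quantifiers.

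The one remaining subtlety is that ``existential without $\VF$-quantifiers'' is not literally the syntactic shape $(\exists \xi \in \RVprod)\psi(x,\xi)$ with $\psi$ quantifier-free demanded in Definition \ref{def:simple}: a priori $\varphi'$ could have its $\RV$-quantifiers interleaved with connectives rather than pulled to the front. So the second step is to prenex the $\RV$-quantifiers: since all quantifiers in $\varphi'$ are existential and range over $\RV$-sorts, and since existential quantifiers commute with $\vee$ and can be pushed past $\wedge$ after renaming bound variables (the standard prenexing of a positive-quantifier formula), $\varphi'$ is equivalent to a formula $(\exists \xi) \psi(x,\xi)$ with $\psi$ quantifier-free and $\xi$ a tuple of $\RV$-variables, i.e.\ an $\exists$-simple formula. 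Hence $X$ is an $\exists$-simple set. This is entirely routine and the authors will likely state it in one or two lines.

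I do not expect any real obstacle here; the entire content has been front-loaded into Theorem \ref{th:E-reduction} (which in turn rested on Proposition \ref{prop:CDIII} and Propositions \ref{prop:CDI}--\ref{prop:CDII}). The only thing to be slightly careful about is the bookkeeping that prenexing existential $\RV$-quantifiers does not reintroduce $\VF$-quantifiers and does not require any hypothesis beyond commutation of existential quantifiers — which is exactly the fact already used at the start of the proof of Theorem \ref{th:E-reduction}. One could even phrase the corollary's proof as: ``Immediate from Theorem \ref{th:E-reduction} and the definition of $\exists$-simple formulas (Definition \ref{def:simple}), using that existential $\RV$-quantifiers prenex.'' If one wanted to be fully explicit, the proof would read: let $X = \varphi(K)$ for an existential $\varphi$; apply Theorem \ref{th:E-reduction} to get an equivalent existential $\varphi'$ without $\VF$-quantifiers; move the remaining (existential, $\RV$-sorted) quantifiers to the front to obtain $(\exists \xi \in \RVprod)\psi(x,\xi)$ with $\psi$ quantifier-free; conclude $X$ is $\exists$-simple. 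This also retroactively upgrades Proposition \ref{prop:CDIII} to the full Theorem \ref{th:CDIII}, as already flagged in the text.
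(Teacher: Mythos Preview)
Your proposal is correct and matches the paper's approach exactly: the paper's proof is the single line ``This follows immediately from Theorem \ref{th:E-reduction}.'' Your extra discussion of prenexing is harmless but unnecessary, since Definition \ref{def:simple} already identifies ``existential formula without $\VF$-quantifiers'' with the prenex shape $(\exists \xi \in \RVprod)\psi(x,\xi)$.
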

\begin{proof}
This follows immediately from Theorem \ref{th:E-reduction}.
\end{proof}
\begin{proof}[Proof of Theorem \ref{th:CDIII}]
Combine Proposition \ref{prop:CDIII} with  Corollary \ref{cor:exists_is_simple}.
\end{proof}
\begin{corollary} \label{cor:simple_is_strong}
Any definable function $f \colon X \to Y$ with an $\exists$-simple graph is $\exists$-strongly definable.
\end{corollary}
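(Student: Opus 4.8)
The plan is to reduce the statement to the case where $X$ is a single $\exists$-simple cell and then invoke the cell decomposition machinery directly. First I would observe that by Corollary~\ref{cor:exists_is_simple} (or rather, its input Theorem~\ref{th:CDIII}), the graph $\graph(f) \subseteq \VF^m \times Y$ — here I should be careful about which sort plays the role of the distinguished $\VF$-variable — is covered by finitely many $\exists$-simple cells. The key reduction is that being $\exists$-strongly definable is a local property in the following sense: if $X = \bigsqcup_j X_j$ is a finite partition into $\exists$-simple sets and each restriction $f_{|X_j}$ is $\exists$-strongly definable, then $f$ itself is, since given an $\exists$-simple $\psi(y,z)$ one forms the finite disjunction $\bigvee_j (x \in X_j \land \varphi_j(x,z))$ where $\varphi_j$ witnesses $\exists$-strong definability of $f_{|X_j}$, and this disjunction is $\exists$-simple by Lemma~\ref{le:boolean_ops}. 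So it suffices to handle $f$ on each piece of a suitable decomposition of its graph.

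Next I would separate two cases according to the target sort. If $Y \subseteq \RVprod$, then $f$ is automatically $\exists$-strongly definable by Lemma~\ref{le:strongly-def}~(\ref{it:map-to-aux}), so there is nothing to do. The substantive case is when the target involves the $\VF$-sort; by composing with coordinate projections and using Lemma~\ref{le:strongly-def}~(\ref{it:comp}) together with the fact that the $\RVprod$-valued coordinates are already fine, I can reduce to $f \colon X \to \VF$. Now apply Theorem~\ref{th:CDIII} to the graph $\graph(f) \subseteq \VF \times (\VF^m \times \RVprod)$ viewed with the $\VF$-value of $f$ as the distinguished first coordinate: this writes $\graph(f)$ as a disjoint union of $\exists$-simple cells $Z$ with presentations $(\lambda, Z_D)$, centers $c(x,\xi)$, and the cell condition $\rv_N(t - c(x,\xi)) \in R(\xi)$. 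But $\graph(f)$ projects bijectively onto $X$, which forces each cell occurring in the decomposition to be a $0$-cell: a $1$-cell fiber $Z_D(\xi)$ contains, for a fixed $(x,\xi)$, a whole ball's worth of $t$-values, contradicting that $t = f(x)$ is determined by $x$. Hence $R(\xi) = \{0\}$, which means $t = c(x,\xi)$ on $Z_D$, i.e. $f(x) = c(\pi(x,\xi))$ after identifying via $\lambda$. Since $c$ is $\exists$-strongly definable by the definition of a cell, and composition/restriction preserve $\exists$-strong definability (Lemma~\ref{le:strongly-def}), each restriction of $f$ is $\exists$-strongly definable, and the partition argument above finishes the proof.

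I expect the main obstacle to be the bookkeeping around which variable is ``the'' $\VF$-variable singled out by the cell decomposition, and correspondingly around the domains: Theorem~\ref{th:CDIII} decomposes subsets of $\VF \times (\VF^m \times \RVprod)$, so I must be sure that $\graph(f)$ really fits this template with the function value in the first slot, and that the reparametrizing variables $\xi$ introduced by $\lambda$ are handled correctly when I compose — in particular that when I write $f = c \circ \lambda$ I am composing $\exists$-strongly definable functions whose domains match up, using Remark~\ref{re:fdom} and Lemma~\ref{le:strongly-def}~(\ref{it:restr}), (\ref{it:comp}), (\ref{it:map-to-aux}). The geometric point that the cells must all be $0$-cells is easy once phrased correctly, but stating it cleanly requires noting that $\lambda$ is a bijection commuting with the projection to $\VF \times U$, so a $1$-cell would give multiple preimages. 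Everything else is routine assembly from the lemmas already established.
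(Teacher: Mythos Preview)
Your argument is correct, but the paper takes a much shorter route. After the same reduction to $Y = \VF$, the paper simply observes that for any $\exists$-simple $\psi(t,y)$ the substitution $\psi(f(x),y)$ is by definition the formula $(\exists t \in \VF)(\varphi(x,t) \land \psi(t,y))$, which is existential; Theorem~\ref{th:E-reduction} then immediately gives an equivalent $\exists$-simple formula. No cell decomposition of the graph, no $0$-cell argument, no piecing together is needed.

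Your approach unwinds what Theorem~\ref{th:E-reduction} does in this specific situation: you decompose $\graph(f)$ into $\exists$-simple cells, note that the function condition forces every nonempty cell to be a $0$-cell (a $1$-cell fiber would contain a full ball of $t$-values), and then exhibit $f$ on each piece as the center $c$ composed with the $\exists$-strongly definable reparametrization coming from $\lambda$. This works, but the bookkeeping you anticipated is real: you need that for each $x \in X_j$ there is a \emph{unique} $\xi$ with $(x,\xi) \in D$ (which follows since $Z \to Z_D \to D$ and $Z \to X_j$ are all bijections), that the resulting map $x \mapsto (x,\xi)$ is $\exists$-strongly definable (identity in the $\VF$-coordinates, and Lemma~\ref{le:strongly-def}(\ref{it:map-to-aux}) for the $\RVprod$-coordinate with graph $D$), and that a tuple of $\exists$-strongly definable maps is again $\exists$-strongly definable (iterated substitution). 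The paper's proof sidesteps all of this by invoking the quantifier reduction theorem, which has already absorbed the cell-decomposition work.
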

\begin{proof}
By Lemma \ref{le:strongly-def} (\ref{it:map-to-aux}) and induction, it suffices to treat the case where $Y = \VF$.
Let $\varphi(x,t)$ be an $\exists$-simple formula defining $\graph(f)$ and let $\psi(t,y)$ be any $\exists$-simple formula.
We need to show that $\psi(f(x),y)$ is equivalent to an $\exists$-simple formula. Recall that $\psi(f(x),y)$ is just a shorthand for
\[ (\exists t \in \VF)( \varphi(x,t) \land \psi(t,y) ). \]
As this is an existential formula, we may conclude by Theorem \ref{th:E-reduction}.
\end{proof}

We also obtain an ``existential AKE-like-principle'', similar to \cite[Thm.~7.1]{AF16} (but relative to $\RV_N$), where AKE stands for Ax-Kochen and Ershov. For any $\LRV$-structure $K$, we write $\Th_{\exists}(K)$ for the set of all existential $\LRV$-sentences which hold in $K$. Let $\LRV_{|\RV}$ be the restriction of $\LRV$ to the sorts $\{\RV_N\}_{N \in \NN}$. Then we similarly let $\Th_{\exists}( \{ \RV_{N,K} \}_{N \in \NN} )$ be the existential theory of the $\LRV_{|\RV}$-structure $\{\RV_{N,K}\}_{N \in \NN}$.

\begin{corollary} \label{cor:E-AKE}
Let $K, L$ be Henselian valued fields of characteristic zero, then
\[ L \models \Th_{\exists}(K) \text{ if and only if } \{ \RV_{N}(L)\}_{N \in \NN} \models \Th_{\exists}( \{\RV_{N}(K)\}_{ N \in \NN} ) .\]
\end{corollary}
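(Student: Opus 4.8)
The plan is to reduce the statement to Theorem \ref{th:E-reduction} (or rather its instance Theorem \ref{th:E-reductionv1}), exploiting that the $\exists$-VF reduction is uniform in the sense that the witness formula it produces is again an $\LRV$-formula over the same signature. First I would handle the easy direction: suppose $\{\RV_N(L)\}_{N} \models \Th_{\exists}(\{\RV_N(K)\}_{N})$, and let $\varphi$ be an existential $\LRV$-sentence with $K \models \varphi$. By Theorem \ref{th:E-reductionv1}, $\varphi$ is equivalent modulo $\THeno$ to an existential $\LRV$-sentence $\varphi'$ without $\VF$-quantifiers. Since $\varphi'$ has no $\VF$-quantifiers and $\varphi$ was a \emph{sentence}, the only way $\VF$-terms can still occur in $\varphi'$ is inside terms of the form $\rv_N(\text{closed }\VF\text{-term})$; but the only closed $\VF$-terms available are built from $0,1,+,-,\cdot$ and the extra constants, so each such term evaluates, under $\rv_N$, to an element of $\RV_N$ named by a closed $\LRV_{|\RV}$-term (namely a polynomial expression in $\rv_N(0), \rv_N(1)$ and the images of the constants). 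Hence $\varphi'$ is (equivalent to) an existential $\LRV_{|\RV}$-sentence, so $\varphi' \in \Th_{\exists}(\{\RV_N(K)\}_{N})$, whence $\{\RV_N(L)\}_{N} \models \varphi'$, and therefore $L \models \varphi'$, i.e. $L \models \varphi$. Thus $L \models \Th_{\exists}(K)$.

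For the converse direction, assume $L \models \Th_{\exists}(K)$ and let $\sigma$ be an existential $\LRV_{|\RV}$-sentence holding in $\{\RV_N(K)\}_{N}$. Viewing $\sigma$ as an $\LRV$-sentence (which is legitimate, as $\LRV_{|\RV}$ is a sublanguage), it is in particular an existential $\LRV$-sentence true in $K$, hence $\sigma \in \Th_{\exists}(K)$, hence $L \models \sigma$. Since $\sigma$ mentions only the $\RV_N$-sorts and their symbols, $L \models \sigma$ means precisely $\{\RV_N(L)\}_{N} \models \sigma$. Therefore $\{\RV_N(L)\}_{N} \models \Th_{\exists}(\{\RV_N(K)\}_{N})$, as desired.

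The only genuinely delicate point — and the step I would spell out carefully — is the claim in the first paragraph that after VF-quantifier elimination a \emph{sentence} becomes equivalent to one over the $\RV$-sorts only. This needs the observation that in an existential $\LRV$-formula with no $\VF$-quantifiers and no \emph{free} $\VF$-variables, every occurrence of a $\VF$-term is closed, and every atomic $\VF$-subformula can be pushed through $\rv_N$: an equation $s = t$ between closed $\VF$-terms is equivalent to $\rv_N(s-t) = 0$ for any $N$, and there are no other $\VF$-relations in $\Lring$. Composing with the homomorphism property of $\rv_N$ on polynomials, each closed $\VF$-term gets a closed $\RV_N$-name, so the whole formula rewrites without any $\VF$-sort at all. (One should double-check that the Hensel-lift predicates $P_{N,d}$ cause no trouble here: they already live on $\RV_N \times \RV_{N^2}^{d+1}$, so they are fine.) Everything else is a routine unwinding of the definitions of $\Th_{\exists}$ and of what it means for a sentence in a sublanguage to hold in a reduct.
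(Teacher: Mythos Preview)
Your converse direction is fine, and the overall strategy for the main direction (apply $\exists$-VF reduction, then argue the resulting sentence lives in the $\RV$-sorts) matches the paper. However, there is a genuine gap at the step where you claim that after VF-quantifier elimination ``each closed $\VF$-term gets a closed $\RV_N$-name (namely a polynomial expression in $\rv_N(0),\rv_N(1)\dots$)''. The language on $\RV_N$ has multiplication but no addition function symbol, only the ternary relation $\oplus$, and $\rv_N$ is \emph{not} a ring homomorphism. So there is no such thing as a polynomial expression in $\rv_N(1)$, and in mixed characteristic the hyperfield sum $\sum_{k=1}^m 1_{\RV_N}$ is typically not a singleton (e.g.\ in residue characteristic $p$ the sum of $p$ copies of $1_{\RV_1}$ contains $0$). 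Thus $\rv_N(m)$ for an integer $m$ is not, in general, the value of any closed $\LRV_{|\RV}$-term, nor is it pinned down by a naive $\oplus$-condition at level $N$.

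The paper handles exactly this point, and it is the one nontrivial step. First it observes that the hypothesis on the $\RV$-sides already forces $K$ and $L$ to have the same residue characteristic $p$ (detected by whether $0 \in \sum_{i=1}^p 1_{\RV}$). Then, knowing $p$, for each integer $m_i$ occurring it passes to a deeper level $\RV_{N_i p^{M_i}}$ with $p^{M_i}\geq m_i$; at that depth the hyperfield sum $\sum_{k=1}^{m_i} 1_{\RV_{N_i p^{M_i}}}$ projects under $\rv_{N_i}$ to the singleton $\rv_{N_i}(m_i)$, so one can write down an existential $\LRV_{|\RV}$-sentence equivalent to $\varphi$ modulo $T_{\mathrm{Hen},0,p}$. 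You should replace your ``homomorphism property of $\rv_N$'' paragraph with this argument (and record why the residue characteristics must agree).
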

\begin{proof}
Assume that $\{ \RV_{N}(L)\}_{N \in \NN} \models \Th_{\exists}( \{\RV_{N}(K)\}_{ N \in \NN} )$. Note that this implies that $K$ and $L$ have the same residue field characteristic $p$. Indeed for each prime $p$ it holds that $K$ has residue characteristic $p$ if $0 \in \sum_{i=1}^p 1_{\RV_K} $ and $K$ has residue characteristic different from $p$ if $\ord( \sum_{i = 1}^p 1_{\RV_{K}} ) = 0$.
Suppose $K$ has residue characteristic $p > 0$ (the case $p = 0$ is similar) and let ${\THen}_{,0,p}$ be the $\LRV$-theory of nontrivial Henselian valued fields of mixed characteristic $(0,p)$.
Now let $\varphi$ be an existential $\LRV$-sentence and suppose that $K \models \varphi$.
We show that it is equivalent modulo ${\THen}_{,0,p}$ to an existential $\LRV_{|\RV}$-formula.
By Theorem \ref{th:E-reduction}, we may assume that $\varphi$ contains no $\VF$-quantifiers.
Then every $\VF$-term in $\varphi$ is given by some $m \in \ZZ$.
Hence, there exists an existential $\LRV_{|\RV}$-formula $\psi(y_1,\dots,y_r)$ and integers $m_i,N_i$ for $i = 1,\dots,r$ such that $\varphi$ is equivalent to $\psi(\rv_{N_1}(m_1),\dots,\rv_{N_r}(m_r) )$. Now take each $i \in \{1,\dots,r\}$ an integer $M_i$ such that $p^{M_i} \geq m_i$. Then we have
\[ \rv_{N_i}(m_i) = \rv_{N_i}\left( \sum_{k = 1}^{m_i} 1_{\RV_{N_i p^{M_i} } }\right). \]
Hence, we may find an existential $\LRV_{|\RV}$-sentence which is equivalent to $\varphi$ modulo ${\THen}_{0,p}$. In particular, $L \models \varphi$.
\end{proof}
\begin{remark}
In \cite{AF16} a result similar to Corollary \ref{cor:E-AKE} is proved for \emph{equicharacteristic} Henselian valued fields (of any characteristic). They work in a three-sorted structure, with sorts for the valued field, residue field and value group. Their results do not extend to the mixed characteristic case, as demonstrated  in \cite[Rem.~7.4]{AF16}. Because we allow data from all $\RV_N$ (and not just $\RV_1$), our results hold without restriction on the residue characteristic or ramification.
We also note that a version of Corollary \ref{cor:E-AKE} trivially holds for fields of positive characteristic. Indeed, if $K$ is of characteristic $p > 0$, then $\RV_{p,K}^{\times} \cong K^{\times}$ and there is an existential $\LRV_{|\RV}$-formula defining the addition of $K$ on $\RV_{p,K}$.
\end{remark}
\section{Descent for the $K$-index}\label{sec:descent}
\subsection{The $K$-index for functions on a non-Archimedean local field $K$}
We now define the $K$-index for functions $F \colon X \subseteq K^n \to \CC$, where $K$ is a non-Archimedean local field. Similar terminology exists for the Archimedean fields $\RR$ and $\CC$ instead of $K$, and is usually called the real, resp.~complex, log canonical threshold, or, the real, resp.~complex Arnold index, see \cite{Arnold.G.V.II}, \cite{Kollar}, \cite{MustJAMS,Mustata2}.
%This value is a measure of how singular $f$ is on its domain.
%For example, if $f(x) = \abs{1/q(x)}_K$, with $q(x) \in K[x]$ a polynomial and $\abs{\cdot}_K$, then by construction $\ind^K_X(f)$ is the $K$-log canonical threshold of the function $q \colon \cO_K \to K \colon x \mapsto q(x)$ (denoted by $\lambda( \mathcal{I}_q )$ in \cite{VZ07}).

First, we fix some notation regarding integrals.
Let $K$ be a non-Archimedean local field $K$, namely, $K$ is a finite extension of $\FF_p((t))$ or $\QQ_p$ of some degree $n$. Write $n = e_K f_K$ where $p^{f_K} \eqqcolon q_K$ is the cardinality of the residue field and $e_K$ is its ramification index.
The valuation $\ord \colon K^\times \to \frac{1}{e_K} \ZZ = \Gamma$ induces a non-Archimedean absolute value $\abs{x}_K = q_K^{- e_K \ord x}$.
We equip $K$ with its additive Haar measure, normalized such that $\cO_K$ has measure $1$.
All integrals will be with respect to this measure.
%Note that with these conventions we have the equality $\int_{a \cO_{K}} \, dx  = \abs{a}_K$ for all $a \in K$.
%
\begin{definition}
	Let $X$ be a %bounded,
measurable subset of $K^n$ and let $F \colon X \to \CC$ be a measurable function. Denote by $\abs{\cdot}_{\CC}$ the absolute value on $\CC$.
	\begin{enumerate}
		\item We define the \emph{$K$-index} $\ind^K_X(F)$ of $F$ as the supremum of all $s \in \RR_{> 0}$ for which
		\[
		\int_X \abs{F(x)}_{\CC}^s \, dx < + \infty
		\]
		if this supremum exists in $\RR_{> 0}$. We set $\ind^K_X(F) = 0$ if the set of such $s$ is empty and we set $ \ind^K_X(F) = + \infty$ if it is unbounded.
		
		\item For $P \in X$, we define $\ind^K_P(F)$, the \emph{$K$-index of $F$ at $P$}, as the supremum of all values
		\[
		\ind^K_{X \cap Y}(F)
		\]
		as $Y$ ranges over all neighborhoods of $P$ in $K^n$.
	\end{enumerate}
\end{definition}
\begin{remark}
Let $g(x) \in \cO_K[x]$ be a nonconstant polynomial with $g(0) = 0$, considered as a map $g \colon \cO_{K}^n \to K$. Suppose that $K$ is of characteristic zero. If $F(x) = \abs{g(x)}^{-1}_K$ almost everywhere, then $\ind^K_{P}(F)$ with $P=\{0\}$  coincides with the value $\lambda( \cI_g )$ defined in \cite{VZ07}. By \cite[Rem.\,2.8]{VZ07} $\lambda( \cI_g )$ is at least the (complex) log canonical threshold of $g$ at zero.
%which is a well-known singularity invariant
%Moreover, if $L$ is a sufficiently large finite field extension of $K$, then the $L$-log canonical threshold of $p$ equals $\lct(p)$.
%A similar comparison holds for the $K$-index at at point $P$ and the local log canonical threshold.
\end{remark}
\subsection{Uniform $p$-adic integration over existentially definable sets} \label{sec:p-E-int}
In this section, we prove that if $K$ is a $p$-adic field, and $f \colon X\subseteq K^n \to K$ is a function whose graph is defined by an existential $\LRV$-formula, then the $K$-index of $x \mapsto \abs{f(x)}_K$ does not increase when passing to a finite field extension (Theorem \ref{th:K-ind-global}). Informally, this expresses that $f(x)$ can not suddenly become less singular by passing to larger fields. This can be considered as a result about descent, and, about semi-continuity, for the $p$-adic indices. If $f(x)$ is not existentially definable, then such form of semi-continuity does not necessarily hold, as illustrated in Example \ref{ex:descent-failure}. These descent results will be extended to local fields of (large) positive charactgeristic in Section \ref{sec:7.4}, by the transfer principle for motivic and uniform $p$-adic integrals.
 %
%\subsection{Uniform $p$-adic integration over existential sets}

The fact that we don't consider linear combinations (and neither differences) of functions of the form $x \mapsto \abs{f(x)}_K$ can be considered as a basic form of positivity (or, non-negativity) of the functions we consider for our study of descent, similar to semi-ring approach (namely without differences) from \cite{CL08}. In fact, we work with some kind of double positivity, namely the mentioned non-negativity of the functions $\abs{f(x)}_K$, together with the existential nature of our objects, as explained in the introduction.

For the purpose of integration, we restrict to a setting where $\lang$ is an expansion of $\LRV$ by constants only and where all the fields under consideration are non-Archimedean local fields. More precisely, for the remainder of this section, we adopt the following conventions.
%We also coarsen our notion of definable sets accordingly (in the notation of Section \ref{sec:modtheo-conventions}, we take a smaller $\cK$). We give a precise description of our notion of definable sets in this setting below.
%
\begin{notation}
Write $\Loc^0$ for the collection of all non-Archimedean local fields of characteristic zero, and, given any $K_0 \in \Loc^0$ write $\Loc^0_{K_0}$ for the collection of all $K \in \Loc^0$ that are finite field extensions of $K_0$.
Say that $X$ is a \emph{definable set} if it is either $\LRV$-definable or $\LRV \cup K_0$-definable, for some $K_0 \in \Loc^0$.
Note that if $X$ is $\LRV \cup K_0$-definable then it has a natural interpretation $X_K$, for each $K \in \Loc^0_{K_0}$.
\end{notation}
%
%
%\begin{definition} \label{def:defK}
%\begin{enumerate}
%	\item A collection $X = (X_K)_K$ of subsets $X_K \subseteq K^m \times \RVprod$, where $K$ runs over $\Loc^0$ is a \emph{definable set}, if there exists an $\LRV$-formula $\varphi(x)$ such that $\varphi(K) = X_K$ for all $K \in \Loc^0$.
%	
%	\item A collection $Y = (Y_K)_K$ where $K$ runs over $\Loc^0_{K_0}$ is called a ($K_0$-)\emph{definable set}, if there exists an $\LRV \cup K_0$-formula $\psi(y)$ such that $Y_K = \psi(K)$, for all $K \in \Loc^0_{K_0}$.
%\end{enumerate}
%\end{definition}
%
%
%\begin{remark}
%The notions of definable functions, $\exists$-simple sets and $\exists$-strongly definable functions carry over to the setting of Definition \ref{def:defK}%
%\footnote{In the context of this section, two formulas are equivalent if they define the same definable sets. This is coarser than $\THen$-equivalence.}.
%By applying Theorem \ref{th:CDIII} to the underlying formula $\varphi(x)$ (resp. $\psi(x,y)$), we also obtain an analogous cell decomposition statement in this setting.
%Also remark that if $(f_K)_K$ is a definable function, defined by an (existential) $\lang$-formula $\varphi(x,y)$, then
%we can partition the definable set $\{(x,y) \mid f(x)  = y \}$ into $\exists$-simple cells by applying Theorem \ref{th:CDIII} to $\varphi(x,y)$.
%\end{remark}
%
%From now on we will use the term \emph{definable set} to refer to any of the two cases in the above definition, and will only use ``$K_0$-definable set'' to emphasize that we allow parameters from $K_0$ in the sort $\VF$.

The following lemma is a direct consequence of cell decomposition (see also \cite{Pas89,Pas90}).
\begin{lemma} \label{le:int}
Let $U$ and $X \subseteq \VF^m \times U$ be existentially definable.
Then, there exist finitely many positive integers $M_1,\dots,M_r$ and existentially definable sets $D_1,\dots,D_r \subseteq \RVprod \times U$ such that for each $K \in \Loc^0$ (resp. $\Loc^0_{K_0}$) and each choice of $u \in U_K$ the following equality holds
\[ \int_{X_K(u)} \, dx =
\sum_{i = 1}^r q^{-(m + e_K \ord M_i)}
\sum_{ \xi \in D_{i,K}(u) }
q_K^{- e_K(\ord(\xi_1) + \dots + \ord(\xi_{m})) },
\]
in the extended real numbers $\RR \cup \{ + \infty \}$.
\end{lemma}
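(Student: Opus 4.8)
The statement expresses the Haar measure of a fibre of an existentially definable set as a sum over $\RVprod$-points of existentially definable sets; the natural route is to use the existential cell decomposition (Theorem~\ref{th:CDIII}) together with the fact that the fibre volume of a single cell is a sum of geometric-type terms indexed by the base $D$. First I would reduce to the case $m=1$ by induction: writing $X \subseteq \VF \times (\VF^{m-1} \times U)$, I would apply Theorem~\ref{th:CDIII} to $X$ viewed as a family of subsets of $\VF$ parametrized by $\VF^{m-1} \times U$, integrate out the last valued-field variable using the one-variable case, and then feed the resulting integrand (a sum over $\RV$-sorts of powers of $q_K$) back into the induction hypothesis applied to the projection of the cells to $\VF^{m-1} \times U$. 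Here I must be careful that the projection of an $\exists$-simple cell onto $\VF^{m-1}\times U \times \RVprod$ is again existentially definable, which is exactly what Theorem~\ref{th:CDIII} and Corollary~\ref{cor:exists_is_simple} guarantee, and that the coefficients appearing are $\exists$-strongly definable so that all the intermediate sets stay within the existential fragment.

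For the base case $m=1$: by Theorem~\ref{th:CDIII}, $X_K(u)$ decomposes as a finite disjoint union of fibres $Z_{D,K}(\xi)$ of $\exists$-simple cells. For a $0$-cell, the fibre in the $\VF$-variable $t$ is a single point $\{c(x,\xi)\}$ (measure zero), so it contributes nothing to the integral. For a $1$-cell of depth $N$ with center $c$ and $R(\xi) \subseteq \RV_N^\times$, the set $\{t \mid \rv_N(t - c(u,\xi)) \in R(\xi)\}$ is, for fixed $\xi$ with $R(\xi) = \{\rho\}$ after further subdividing so that $R$ is a singleton, an open ball of valuative radius $\ord(\rho) + \ord N$, hence of Haar measure $q_K^{-(1 + e_K(\ord \rho + \ord N))}$ — more precisely $q_K^{-1}$ times $q_K^{-e_K \ord \rho}$ times $q_K^{-e_K \ord N}$. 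Summing over $\xi$ in the base $D$ (which is an $\exists$-simple subset of $\RVprod \times U$) and absorbing $\ord N$ into the constant $M_i$, one obtains precisely an expression of the claimed shape, with $D_i$ the image of the base after recording $\rho = \rv_N(t-c)$ as an extra $\RV_N$-coordinate, which keeps it existentially definable. One subtlety: the integral may be $+\infty$, and the equality is asserted in $\RR \cup \{+\infty\}$, so I should note that both sides are simultaneously finite or infinite, which follows since each term is non-negative and the decomposition into cells is finite and exact.

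\textbf{Main obstacle.} The genuinely delicate point is bookkeeping the \emph{existential} nature throughout: when I decompose, reparametrize cells by extra $\RV_N$-variables, pass to images under coordinate projections, and substitute $\exists$-strongly definable centers, I must check at each step that the sets $D_i$ remain existentially definable and that no new valued-field quantifier sneaks in. This is exactly what the machinery of Sections~\ref{sec:decomp}--\ref{sec:qe} was built to handle: Theorem~\ref{th:E-reduction} (and Corollary~\ref{cor:exists_is_simple}) lets me freely project out $\VF$-coordinates while staying existential, Lemma~\ref{le:strongly-def}\,(\ref{it:map-to-aux}) handles functions into $\RV$-sorts, and Corollary~\ref{cor:simple_is_strong} ensures centers behave well under substitution. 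So the proof is essentially an invocation of Theorem~\ref{th:CDIII} plus an induction, with the real content already discharged; the write-up mainly needs to make the volume computation for a single $1$-cell explicit and track that the index sets $D_i \subseteq \RVprod \times U$ stay in the existential fragment.
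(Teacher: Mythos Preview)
Your proposal is correct and follows essentially the same route as the paper: induct on $m$, apply Theorem~\ref{th:CDIII} to write $X$ as a finite disjoint union of $\exists$-simple cells over $\VF^{m-1}\times U$, observe that for a $1$-cell of depth $N$ the fibre $\{t:\rv_N(t-c)=\xi_1\}$ has Haar measure $q_K^{-(1+e_K(\ord\xi_1+\ord N))}$, and then feed the base $D\subseteq \VF^{m-1}\times(U\times\RVprod)$ back into the induction hypothesis. The paper's write-up is terser (it does not separate out a base case and simply invokes Fubini--Tonelli and translation invariance), but the structure and the key computation are the same.
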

\begin{proof}
We prove this by induction on $m$.
By Theorem \ref{th:CDIII}, we may reduce to the case where $X \subseteq \VF \times (\VF^m \times U)$ is a single $\exists$-1-cell, say with center $c = c(x,u,\xi)$.

By $\sigma$-additivity and Fubini-Tonelli, it follows for all $K \in \Loc^0$ and $u \in U_K$ that
\[
\int_{X_K(u)} \, dx \,dt =
\sum_{ \xi \in \RVprod }
\int_{(u,x,\xi) \in D_K } \left( \int_{ \rv_N(t-c) = \xi_1  } \, dt \right) \, dx .
\]
As the Haar measure is translation invariant, the inner integral evaluates to
\[q_K^{- e_K (\ord(\xi_1) + \ord(N) ) + 1}.\]
Now apply the induction hypothesis to $D \subseteq \VF^m \times (U \times \RVprod)$.
\end{proof}
\begin{notation}
If $f \colon X \subseteq \VF^m \to \VF$ is a definable function and $K \in \Loc^0$ (resp. $\Loc^0_{K_0}$), then we write $\ind^K_X(\abs{f})$ rather than $\ind^K_{X_K}(\abs{f_K}_K)$.
\end{notation}
We now come to our main result: descent for the $K$-index of existentially definable functions.
\begin{theorem} \label{th:K-ind-global}
Let $X \subseteq \VF^m$ be existentially definable and let $f \colon X \to \VF$ be an existentially definable function. If $K \in \Loc^0$ (resp. $\Loc^0_{K_0}$), then we have for all finite field extensions $L \geq K$ that
\[
\ind_X^L( \abs{f} ) \leq \ind_X^K( \abs{f} ).
\]
\end{theorem}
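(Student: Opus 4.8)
The plan is to reduce, via the existential cell decomposition of Theorem~\ref{th:CDIII}, the computation of $\ind_X^K(\abs{f})$ to an explicit combinatorial quantity attached to finitely many $\exists$-simple cells, and then to exploit the fact (equation~(\ref{eq:exist})) that all the data genuinely \emph{transfers} from $K$ to $L$ when everything in sight is existentially definable. Concretely, first I would apply Theorem~\ref{th:E-reduction} (or directly Corollary~\ref{cor:exists_is_simple}) to replace the existential formulas defining $X$ and $\graph(f)$ by $\exists$-simple ones, so that $f$ is $\exists$-strongly definable by Corollary~\ref{cor:simple_is_strong}. Then, for a fixed real $s>0$, I want to understand $\int_{X_K}\abs{f_K}^s\,dx$. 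Since $\abs{f}$ takes values in a discrete set, I would stratify $X$ according to the value $\ord f(x)$; more precisely, writing $f$ in prepared form on cells obtained from Proposition~\ref{prop:CDII}, the set $X$ decomposes into finitely many $\exists$-simple cells on each of which $\rv_N(f(t,x))$ is controlled by an $\exists$-strongly definable function of the leading-term data. Integrating out the $\VF$-variables one at a time as in Lemma~\ref{le:int} converts $\int_{X_K}\abs{f_K}^s\,dx$ into a finite sum of the shape
\[
\int_{X_K}\abs{f_K}^s\,dx \;=\; \sum_{i=1}^r q_K^{-c_i}\sum_{\xi\in D_{i,K}} q_K^{-e_K\big(L_i(\xi)+s\,g_{i,K}(\xi)\big)},
\]
where $D_i\subseteq\RVprod$ are existentially definable, $L_i$ is an affine-linear functional in the $\ord\xi_j$ coming from the Jacobian/measure contributions, and $g_i$ is the $\exists$-strongly definable function giving $\ord f$ on the $i$-th cell.

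The crux is then a \emph{pointwise} comparison: because $D_i$ and $g_i$ are existentially definable and $K\subseteq L$ is an $\LRV$-substructure (Proposition~\ref{le:L-sub}), equation~(\ref{eq:exist}) gives $D_{i,K}\subseteq D_{i,L}$ and $g_{i,L}$ restricted to $D_{i,K}$ equals $g_{i,K}$; moreover the linear data $L_i$ and the exponents $c_i$ are the same polynomial expressions in $e_L,q_L$ as in $e_K,q_K$. Hence, fixing $s$ for which $\int_{X_L}\abs{f_L}^s\,dx<\infty$, every individual sub-sum over $D_{i,L}$ converges; restricting each such sum to the sub-index-set $D_{i,K}\subseteq D_{i,L}$ leaves a sub-series of a convergent series of \emph{positive} terms, which therefore also converges. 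Summing over $i$ shows $\int_{X_K}\abs{f_K}^s\,dx<\infty$. Taking the supremum over admissible $s$ yields $\ind_X^L(\abs{f})\le\ind_X^K(\abs{f})$. The positivity (non-negativity of all terms, no differences) is exactly what makes ``convergent $\Rightarrow$ sub-series convergent'' legitimate, and this is why the semiring/existential double-positivity hypothesis is essential rather than cosmetic.

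Two points need care. First, when comparing $K$ and $L$ one must ensure the \emph{same} cell decomposition works over both fields; this is automatic because the cell decomposition of Theorem~\ref{th:CDIII} is given by $\exists$-simple formulas valid in all models of $T$, so the cells over $L$ restrict to the cells over $K$ — here is precisely where working with $\RV_N$-sorts rather than angular components (Proposition~\ref{le:L-sub} versus Proposition~\ref{prop:ac-restriction-failure}) is indispensable. Second, one must turn the schematic sum above into something for which convergence on $L$ transparently forces convergence on $K$; this is where Lemma~\ref{le:poly-boundRV} enters, bounding the inner $\RV$-sums by a polynomial in $\ord\xi_0$ times the leading term, so that after summing over the remaining variables the whole expression is, up to polynomial factors, a finite sum of geometric-type series in $q_K^{-1}$ whose region of convergence in $s$ is cut out by finitely many linear inequalities with coefficients independent of the field. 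I expect the main obstacle to be bookkeeping: matching up the linear functionals $L_i$, the Jacobian exponents, and the domains $D_i$ so that the convergence condition in $s$ is literally the same inequality system over $K$ and over $L$ except that $D_{i,K}\subseteq D_{i,L}$ — once that is set up cleanly, the descent is immediate from positivity, but getting the prepared form of $f$ and the measure computation to line up uniformly is the technical heart.
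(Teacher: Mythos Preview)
Your overall strategy matches the paper's: reduce via Lemma~\ref{le:int} to sums over existentially definable $D_i\subseteq\RVprod$, then exploit $D_{i,K}\subseteq D_{i,L}$ and positivity. But the central step as you state it has a genuine gap. You claim that restricting the convergent $L$-sum to the sub-index-set $D_{i,K}$ yields $\int_{X_K}\abs{f_K}^s<\infty$. However, the term at a fixed $\xi\in D_{i,K}$ in the $L$-sum is $q_L^{-e_L H(\xi,s)}=(q_K^{-e_K H(\xi,s)})^{[L:K]}$, not the term $q_K^{-e_K H(\xi,s)}$ that appears in the $K$-sum. Convergence of $\sum a_\xi^{n}$ with $a_\xi>0$ does not imply convergence of $\sum a_\xi$ (compare $\sum 1/k^2$ with $\sum 1/k$), so the sub-series argument alone cannot bridge $L$-convergence to $K$-convergence. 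The definability constraints do rule out harmonic-type behavior, but establishing this is precisely the content of Lemma~\ref{le:poly-boundRV}, which you relegate to a technical afterthought when it is in fact the heart of the matter.

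The paper makes your second ``care'' point the actual argument. It stratifies by $\xi_0=\rv(f(x))$, writes $\int_{X_K}\abs{f_K}^s=\sum_{\xi_0}q_K^{-e_K s\,\ord\xi_0}a_K(\xi_0)$ as a power series in $q_K^s$ with radius $q_K^{\lambda_K}$, $\lambda_K=\ind_X^K(\abs{f})$, and then uses Lemma~\ref{le:poly-boundRV} to show the limsup governing this radius equals $\limsup_{\ord\xi_0\to-\infty}q_K^{\,m_{D,K}(\xi_0)/\ord\xi_0}$, where $m_{D,K}(\xi_0)=\min\{\ord(\zeta_1\cdots\zeta_m):\zeta\in D_K(\xi_0)\}$. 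Now $D_K\subseteq D_L$ gives $m_{D,K}\geq m_{D,L}$ pointwise, and since $\ord\xi_0<0$ this yields $\lambda_K\geq\lambda_L$. This comparison of \emph{minima} over nested index sets is the correct replacement for your sub-series step. The paper also treats separately the case where some $a_K(\xi_0)=+\infty$, forcing $\ind_X^K=0$ and showing the same holds over $L$; your proposal omits this case entirely.
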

\begin{proof}
Write $e \coloneqq e_K$ as well as $q \coloneqq q_K$.
Let $Y(\xi_0)$ be the definable set given by the condition $x \in X $, together with $\rv(f(x)) = \xi_0 \in \RV^{\times}$.
By $\sigma$-additivity and linearity, it follows for each $s \in \RR$ that
\begin{equation} \label{eq:int_fs}
	\int_{X_K} \abs{f_K(x)}_K^s \, dx = \sum_{\xi_0 \in RV}  q^{- e \ord(\xi_0) s}  \int_{ Y_K(\xi_0) } \, dx,
\end{equation}
where the left-hand side is finite if and only if the right-hand side is.

Setting $a_K(\xi_0) = \int_{ Y_K(\xi_0) } \, dx$, we get
\[
\int_{X_K} \abs{f_K(x)}_K^s \, dx =   \sum_{\ord \xi_0 < 0} a_K(\xi_0) (q^s)^{- e \ord \xi_0 } + \sum_{\ord \xi_0 \geq 0} a_K(\xi_0) (q^s)^{ - e \ord \xi_0}  .
\]
First consider the case where all $a_K(\xi_0)$ are finite. Since they are non-negative, it follows that if the second summand converges for some $s = s_ 0$, then it will converge for all $s \geq s_0$.
Now view the first summand as power series in a variable $q^s$ and write $q^{\lambda_K}$ for its radius of convergence.
By definition of the $K$-index, this means that $\lambda_K = \ind^K_X(\abs{f})$.
We may compute this radius using the formula
\[
q^{- \lambda_K} = 	
	\limsup_{ n \to \infty }
	\left( \sum_{-e \ord \xi_0 = n} a_K(\xi_0) \right)^{ \frac{1}{n} }.
\]
Using Lemma \ref{le:int}, we can compute this as a maximum over upper limits of the form
\begin{equation} \label{eq:limsup_D}
	\limsup_{n \to + \infty} \left(
	\sum_{- e \ord \xi_0 = n} \sum_{\zeta \in D_K(\xi_0)}  q^{- e(\ord \zeta_1 + \dots + \ord \zeta_m) }
	\right)^{ \frac{1}{n} },
\end{equation}
where $D$ is existentially definable.
We claim that this value only depends on the function
\[
m_{D,K}(\xi_0) \coloneqq \min\{ \ord(\zeta_1 \cdots \zeta_m) \mid \zeta \in D_K(\xi_0)\}.
\]
More precisely, we claim that (\ref{eq:limsup_D}) can be computed as
\begin{equation} \label{eq:claim}
	\limsup_{\ord \xi_0 \to - \infty} \left( q^{ \frac{ m_{D,K}(\xi_0)}{ \ord \xi_0} } \right).
\end{equation}
Indeed, since $a_K(\xi_0)$ is finite for all $\xi_0$, then the interpretation $\theta_K$ of the (existentially) definable map
\begin{equation} \label{eq:theta}
	\theta \colon D \subseteq \RVprod \times \RV^{\times} \to \RV \colon (\zeta,\xi_0) \mapsto \prod_{i = 1}^m \zeta_i
\end{equation}
satisfies the hypotheses of Lemma \ref{le:poly-boundRV}. It follows that there exists some polynomial $p_K(\gamma) \in \QQ[\gamma]$ such that
\[
q^{-e m_{D,K}(\xi_0) }
\leq \sum_{\zeta \in D_K(\xi_0) }  q^{- e \ord(\zeta_1 \cdots \zeta_m)  }
\leq p_{K}(\ord(\xi_0)) q^{-e m_{D,K}(\xi_0)},
\]
The claim now follows, since $(p_{K}(\ord \xi_0))^{\frac{1}{- e \ord \xi_0}}$ tends to $1$ as $\ord \xi_0$ tends to $- \infty$.

The theorem now follows from the claim and the fact that formula (\ref{eq:claim}) also applies to the computation of $\lambda_L  = \ind^L_X(\abs{f})$, (up to replacing e.g. $D_K$ by $D_L$, but without changing $D$). Since $D$ is existentially definable, we have $D_K \subseteq D_L$. It then follows that $m_{D,K}(\xi_0) \geq m_{D,L}(\xi_0)$ for all $\xi_0$, whence $q^{- \lambda_K} \leq q^{- \lambda_L}$ and thus $\ind^K_X(\abs{f}) \geq \ind^L_X(\abs{f})$.

Finally, consider the case where $a_K(\xi_0)$ is infinite for some $\xi_0 \in \RV_K$. Let $D_1,\dots,D_r \subseteq \RVprod \times \RV$ be the existentially definable sets produced by Lemma \ref{le:int}. Then by Lemma \ref{le:poly-boundRV} there must be some $i \in \{1,\dots,r\}$ such that
\[\{ \ord(\xi_1 \dots \xi_m) \mid (\xi_1,\dots,\xi_m) \in D_{i,K}(\xi_0)) \} \subseteq \Gamma \]
is not bounded below, or such that there exists some $\gamma \in \Gamma$ such that $\ord(\xi_1 \dots \xi_m) = \gamma$ for infinitely many tuples $(\xi_1,\dots,\xi_m) \in D_{i,K}(\xi_0)$. Since $D_{i}$ is existentially definable, it follows that $D_{i,K}(\xi_0) \subseteq D_{i,L}(\xi_0)$, whence also $a_L(\xi_0)$ is infinite.
We conclude that in this case $ \ind^K_X(f) = \ind^L_X(f) = 0$.
\end{proof}
\begin{proof}[Proof of Theorem \ref{th:intro-descent}]
This is included in Theorem \ref{th:K-ind-global}, as any $\Lval$-definable set is also $\LRV$-definable.
\end{proof}

We have also a local form of descent, at a point $P$.

\begin{corollary} \label{cor:K-ind-local}
Let $f \colon X \subseteq \VF^m \to \VF$ be existentially definable. Let $K \in \Loc^0$ and $P \in X_K$. For any finite field extension $L \geq K$ it holds that
\[
\ind^L_P(\abs{f})  \leq \ind_P^K(\abs{f}).
\]
\end{corollary}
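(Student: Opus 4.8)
Looking at Corollary \ref{cor:K-ind-local}, the statement is a "local at a point $P$" version of the global descent Theorem \ref{th:K-ind-global}. The key issue is that $\ind^K_P$ is defined as a supremum over neighborhoods $Y$ of $P$, so I need to reduce the local statement to the global one applied to $X$ intersected with a suitable definable neighborhood.

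The plan is as follows. First I would observe that for the purpose of computing $\ind^K_P(\abs{f})$, it suffices to let $Y$ range over a cofinal family of neighborhoods of $P$, namely the balls $B_K(P,\gamma)$ for $\gamma \in \Gamma$ (or better, balls with center $P$ and valuative radius $\ord N$ for $N \in \NN$, so that they are uniformly definable). The point is that $\ind^K_{X \cap Y}(\abs{f})$ is monotone: as $Y$ shrinks, the integral $\int_{X_K \cap Y_K} \abs{f_K}^s\,dx$ can only decrease, so the set of $s$ for which it is finite can only grow, hence $\ind^K_{X \cap Y}(\abs{f})$ is non-increasing in $Y$; thus the supremum over all neighborhoods equals the supremum (in fact the limit) over this cofinal family of balls.

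Next, for each fixed $N \in \NN$, consider the definable set $X^{(N)}$ obtained by intersecting $X$ with the condition $\ord(x_1 - P_1) > \ord N \wedge \dots \wedge \ord(x_m - P_m) > \ord N$; here I need $P$ to be a parameter, which is fine since we may work in the language $\LRV \cup K_0$ (or pass to $\LRV \cup K$, adjoining $P$ as a constant, and use that $K \in \Loc^0_K$). The restriction $f^{(N)} := f_{|X^{(N)}}$ is still existentially definable, since intersecting with a quantifier-free (valuative) condition preserves existential definability. Then Theorem \ref{th:K-ind-global} applies to $f^{(N)}$ and gives $\ind^L_{X^{(N)}}(\abs{f}) \leq \ind^K_{X^{(N)}}(\abs{f})$ for every finite extension $L \geq K$. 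Taking the supremum over $N$ on both sides and using the cofinality observation yields $\ind^L_P(\abs{f}) = \sup_N \ind^L_{X^{(N)}}(\abs{f}) \leq \sup_N \ind^K_{X^{(N)}}(\abs{f}) = \ind^K_P(\abs{f})$, which is the claim.

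The main obstacle — really a bookkeeping point rather than a deep one — is making sure the neighborhoods $X^{(N)}$ and the restricted functions $f^{(N)}$ stay within the class of existentially definable objects to which Theorem \ref{th:K-ind-global} applies, and that $P$ is legitimately available as a parameter (one adjoins the finitely many coordinates of $P$ as constants, enlarging $K_0$ if necessary so that $K \in \Loc^0_{K_0}$, which does not affect existentiality of the formulas since constants are harmless). One should also check the cofinality claim carefully: every neighborhood of $P$ in $K^m$ contains some $X^{(N)}_K$, and conversely each $X^{(N)}_K$ is itself such a neighborhood, so the two suprema defining $\ind^K_P$ agree; the same holds over $L$. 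Given these routine verifications, the corollary follows immediately from Theorem \ref{th:K-ind-global}.
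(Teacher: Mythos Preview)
Your proposal is correct and follows essentially the same approach as the paper: the paper also intersects $X$ with a cofinal family of existentially $K$-definable balls $B_n(P)$ centered at $P$ (using the radii $n\ord(\pi)$ for a uniformizer $\pi$ in place of your $\ord N$), applies Theorem~\ref{th:K-ind-global} in its $\Loc^0_{K_0}$-form with $K_0 = K$, and takes the supremum over $n$. Your additional remarks on cofinality and on adjoining $P$ as constants are exactly the bookkeeping the paper leaves implicit.
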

\begin{proof}
Let $\pi \in K$ be a uniformizer. For each $n \in \NN$, consider the existentially $K$-definable set $B_n(P)$, given by
\[
B_n(P) = \{ x \in \VF^m \mid  \bigwedge_{i = 1}^m \ord(x_i-P_i) > n \ord(\pi) ) \}.
\]
We apply (the $\Loc^0_{K_0}$-version with $K=K_0$ of) Theorem \ref{th:K-ind-global} to $f_{|X \cap B_n(P)}$ and find that for all $n \in \NN$
\[ \ind^L_{X \cap B_n(P)}(\abs{f}) \leq \ind^K_{X \cap B_n(P)}(\abs{f}). \]
Now take the supremum over $n$ of both sides.
\end{proof}
The condition that $f$ is existentially definable can not be omitted in Theorem \ref{th:K-ind-global} or Corollary \ref{cor:K-ind-local}, as the following example illustrates.
\begin{example} \label{ex:descent-failure}
	Let $f \colon \VF \to \VF$ be the definable function given by
	\[
	x \mapsto f(x) =
	\begin{cases}
		1/x		& \text{ if } x \neq 0 \text{ and } \forall y \in K (y^2 \neq -1), \\
		0 					&  \text{else}. \\
	\end{cases}
	\]
	Note that $f$ is not existentially definable.
	A direct calculation shows $\ind^{\QQ_3}_{0}(\abs{f}) = 1$, while $\ind^{ \QQ_3( \sqrt{-1} ) }_{0}(\abs{f}) = + \infty$.
\end{example}
%

%\begin{example}
%Let $X$ be existentially definable and $L \geq K \geq \QQ_p$ a tower of finite field extensions.
%It is not necessarily the case that
%%
%\[ \int_{X_K} \, dx \leq \int_{X_L} \, dx .\]
%%
%Consider for example the set $X = \{ x \in \VF \mid \rv(x)  = 1 \}$.
%Then $X_K = 1 + \cM_{K}$, hence $X_{\QQ_p}$ has measure $p^{-1}$, while for an uramified extension $L$ of degree $d$, the set $X_L$ has volume $p^{-d}$.
%\end{example}
%
\begin{remark}
For any definable set $X$, we have a ring of ``constructible functions'' $\cC(X)$ on $X$, as defined in \cite{CH18}. If $X$ is existentially definable, it makes sense to consider the sub-semiring where we only allow existential $\LgDP$-formulas in the description of the generators and require generators of type $a,\alpha$ to only take values in $\bA_{+}$ (as in \cite[Se.~4.2]{CL08}) and $\VG_{\geq 0}$ respectively. In contrast to $\cC(X)$ (or $\cC_+(X)$, \cite{CL08,CGH21}), this semiring is not yet stable under integration. One would need to add several new types of functions. We give two examples below.
\begin{example}
Let $X \subseteq \VG \times \VG_{ > 0}$ be given by $\{ (\gamma,\delta) \mid 0 \leq 2 \gamma < \delta \}$ and consider the constant function $2 \in \cC(X)$. Integrating out the $\gamma$-variable, we obtain a new function $f(\delta) \in \cC(\VG_{> 0})$, given for each $K$ by
\[ f_K(\delta) 	= \sum_{0 \leq 2 \gamma < \delta} 2
=  e_K \delta + (e_K \delta \mod 2).  \]
This function is not of the form $e_K a_K(\delta)$ for any existentially definable $a \colon \VG_{> 0} \to \VG_{\geq 0}$. With some more work, one shows that $f(\delta)$ is not built up from the previously mentioned generators by addition and multiplication.
\end{example}
\begin{example}
Consider the existentially definable set $X \subseteq \VF \times \VG_{> 0}$, given by $\{(x,\delta) \mid 0 \leq \delta < 2 \ord x \}$ and the constant function $1 \in \cC(X)$. Then we compute that for all $\delta > 0$ and $K \in \Loc^0$ that
\[ g_K(\delta) \coloneqq \int_{X_K(\delta)} 1 \, dx = \begin{cases}
	q_K^{ - (\frac{e_K \delta}{2} + 1) }	& \text{ if } \delta \in  2 \Gamma, \\
	q_K^{- (\frac{e_K \delta+1}{2} )}    	& \text{ if } \delta \notin 2 \Gamma.
\end{cases}  \]
Now consider $f(\delta) = q^{e_K \delta +2} g(\delta)^2$. The latter is given, for $\delta \geq 0$ by
\[ f_K(\delta) = \begin{cases}
	1 	& \text{ if } \delta \in 2 \Gamma, \\
	q	& \text{ if } \delta \notin 2 \Gamma.
\end{cases} \]
This function $q^{ e_K (\delta \mod 2 \VG )  }$ is clearly not of the form $q^{e_K \beta(\delta)}$ for some existentially definable $\beta \colon \VG_{ > 0} \to \VG$ (else the complement of $2 \VG_{> 0}$ would be existentially definable). With some more work, one can show that it does not lie in the semiring generated by the previously mentioned generators.
\end{example}
\end{remark}
\subsection{Application to the (Serre)-Poincar\'e series}
We now consider the Poincar\'e series and Serre-Poincar\'e series associated to a tuple of polynomials $f(x) = (f_1(x),\dots,f_r(x) )$ in $m$ variables $x = (x_1,\dots,x_m)$ with coefficients in $\cO_{K}$. These generating series are associated to the number of (reductions of) solutions of $f(x) = 0$ in the residue rings. It is well known (see e.g. \cite[Thm.~2]{Meu81}, \cite{Den84,Den91}) that, up to a transformation of variables $T \mapsto q_K^{-s}$, these series are given by $p$-adic integrals.

We apply Theorem \ref{th:K-ind-global} to these integrals and find that the largest pole (in $s$) of these Poincar\'e series can only grow when passing to finite field extensions (Theorem \ref{th:serre-poincare}).
%This implies that, asymptotically, the proportion of solutions is of at least the same order of magnitude (Corollary \ref{cor:serre-poincare}).
%
%
\begin{definition} \label{def:poincare}
Let $K \in \Loc^0$ and take a tuple of polynomials $f(x) = (f_1(x),\dots,f_r(x) )$ in $m$ variables $x = (x_1,\dots,x_m)$ with coefficients in $\cO_{K}$.
Let $\pi \in K$ be a uniformizer and define for all $n \in \NN$ the numbers (as in \cite{Den84})
\begin{enumerate}
	\item $\tilde{N}_{n,K}(f) \coloneqq \# \{ \xi  \in  \left( \cO_K/(\pi^n) \right)^m \mid  f(\xi) = 0 \}$,
	\item $N_{n,K}(f) \coloneqq \# \{ \xi \in \left( \cO_K/(\pi^n) \right)^m \mid \exists y \in \cO_K^m(f(y) = 0 \land \xi =  y + (\pi^n)^m  )  \}$.
\end{enumerate}
We call the corresponding generating series
\begin{enumerate}
	\item $\tilde{P}_{f,K}(T) \coloneqq \sum_{n = 0}^{+ \infty} \frac{ \tilde{N}_{n,K}(f) }{q^{nm}} T^n$,
	\item $P_{f,K}(T)  		  \coloneqq \sum_{n = 0}^{+ \infty} \frac{N_{n,K}(f) }{q^{n(m+1)}} T^n$,
\end{enumerate}
the \emph{Poincar\'e series} and \emph{Serre-Poincar\'e series} of $f$, respectively.
\end{definition}
\begin{theorem} \label{th:serre-poincare}
Let $K_0$ be a $p$-adic field and $f(x) = (f_1(x),\dots,f_r(x))$ be tuple of polynomials in $m$ variables $x = (x_1,\dots,x_m)$ with coefficients in $\cO_{K_0}$. For all $K \in \Loc^0_{K_0}$ we write $- \lambda_K(f)$ (resp. $- \tilde{\lambda}_K(f)$) for the largest real part over all poles in $s$ of $P_{f,K}(q_K^{-s})$ (resp. $\tilde{P}_{f,K}(q_K^{-s})$). Let $\lambda_{K}(f) = + \infty$ (resp. $\tilde{\lambda}_{K}(f) = + \infty$) if there are no poles. For any finite field extension $L \geq K$, we have
\[
\lambda_{L}(f) \leq \lambda_K(f) \quad \text{ and } \quad \tilde{\lambda}_L(f) \leq \tilde{\lambda}_K(f).
\]
\end{theorem}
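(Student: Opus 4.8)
The plan is to reduce each of the two statements to an instance of Theorem \ref{th:K-ind-global} by exhibiting, for each $p$-adic field $K \in \Loc^0_{K_0}$, an existentially $\LRV \cup K_0$-definable function $g$ (respectively $\tilde g$) whose $K$-index equals $\lambda_K(f)$ (respectively $\tilde\lambda_K(f)$). Once this is done, the inequalities $\lambda_L(f)\le\lambda_K(f)$ and $\tilde\lambda_L(f)\le\tilde\lambda_K(f)$ follow verbatim from the descent statement $\ind^L_X(\abs{g})\le\ind^K_X(\abs{g})$ applied with $K$ and its finite extension $L$. So the whole argument is the translation of the Poincaré series into a $p$-adic integral together with the bookkeeping needed to see that this integral is $\int_X\abs{g}_K^s$ for a \emph{single} existentially definable $g$ uniform over $\Loc^0_{K_0}$.

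For the ordinary Poincaré series $\tilde P_{f,K}$ one uses the classical Igusa/Denef formula: writing $\mathcal{Z}(s)=\int_{\cO_K^m}\abs{f_1(x),\dots,f_r(x)}_K^s\,dx$ where $\abs{f(x)}_K\coloneqq\max_j\abs{f_j(x)}_K$, one has the identity relating $\tilde P_{f,K}(T)$ to $\mathcal{Z}(s)$ under $T=q_K^{-s}$ (the standard computation counting points of $\cO_K/(\pi^n)$ via level sets of $\ord f$). Hence the largest real part of a pole of $\tilde P_{f,K}(q_K^{-s})$ is $-\tilde\lambda_K(f)$ with $\tilde\lambda_K(f)=\ind^K_{\cO_K^m}(\abs{\tilde f})$, where $\tilde f$ is the definable (indeed quantifier-free) function $x\mapsto f_1(x)\cdots$ packaged so that $\abs{\tilde f}_K=\max_j\abs{f_j}_K$; concretely take $X=\cO_K^m\times\RV_1$ cut out by $\xi_0\divR\rv(f_j(x))$ for all $j$ together with $\xi_0=\rv(f_{j_0}(x))$ for some $j_0$, which is quantifier-free, hence a fortiori existential. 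Since $f$ has coefficients in $\cO_{K_0}$, this is $\LRV\cup K_0$-definable, so Theorem \ref{th:K-ind-global} applies and gives $\tilde\lambda_L(f)\le\tilde\lambda_K(f)$.

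For the Serre-Poincaré series $P_{f,K}$ the same scheme works but the relevant function is only \emph{existentially} definable, not quantifier-free — this is exactly the phenomenon motivating the paper. Following Denef \cite{Den84}, the numbers $N_{n,K}(f)$ count those residues $\xi\in(\cO_K/(\pi^n))^m$ that lift to an honest zero of $f$ in $\cO_K^m$; introducing the definable set $W=\{x\in\cO_K^m\mid \exists y\in\cO_K^m\,(f(y)=0\wedge \ord(x_i-y_i)\ge n_i\ \forall i)\}$ one expresses $P_{f,K}(q_K^{-s})$ as a $p$-adic integral of the form $\int_{\cO_K^m}\abs{g(x)}_K^s\,dx$ (up to an elementary normalization factor $q_K^{-n}$ absorbed in the $(m+1)$ in the exponent) where $g$ is an existentially definable function built from the "distance to the zero set of $f$" — its graph being $\{(x,t):\exists y(f(y)=0\wedge t=\max_i|x_i-y_i|)\}$, which is patently existential. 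Then $-\lambda_K(f)$ is the largest pole of this integral in $s$, i.e. $\lambda_K(f)=\ind^K_{\cO_K^m}(\abs{g})$, and since $g$ is existentially $\LRV\cup K_0$-definable, Theorem \ref{th:K-ind-global} yields $\lambda_L(f)\le\lambda_K(f)$.

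The main obstacle is the second reduction: one must be careful that the passage from the combinatorial description of $N_{n,K}(f)$ to an integral $\int\abs{g}_K^s$ produces genuinely the \emph{same} existentially definable $g$ for all $K\in\Loc^0_{K_0}$ and that no angular-component data (which would not restrict to subfields, Proposition \ref{prop:ac-restriction-failure}) sneaks in — this is precisely why the whole $\LRV$-framework with Hensel-lift predicates was set up, and why one cannot simply invoke Denef's original argument verbatim. One also needs to check the bookkeeping converting "largest real part of a pole of a rational function in $q_K^{-s}$" into "supremum of $s$ with the integral finite", i.e. that no spurious pole-cancellations occur; this is handled exactly as in the proof of Theorem \ref{th:K-ind-global}, where the integral is written as a power series in $q^s$ whose radius of convergence is $q^{\lambda_K}$. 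Everything else is the standard $p$-adic-integral presentation of Poincaré series and is routine.
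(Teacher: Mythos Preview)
Your overall strategy---express each series as a $p$-adic integral of the form $\int_X\abs{g}^s$ with $g$ existentially definable, then invoke Theorem~\ref{th:K-ind-global}---is exactly the paper's approach, and for the ordinary Poincar\'e series it goes through essentially as you say (modulo a sign: since $\abs{\tilde f}_K\le C$ on $\cO_K^m$, the index $\ind^K_{\cO_K^m}(\abs{\tilde f})$ is $+\infty$; you want $\ind^K(\abs{1/\tilde f})$, which is what the identity $(1-q^{-s})\tilde P_{f,K}(q^{-s})=1-q^{-s}\int_{\cO_K^m}\abs{f}^s$ actually gives).

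The Serre--Poincar\'e part, however, has a real gap. The set you write down,
\[
\{(x,t):\exists y\,(f(y)=0\ \wedge\ t=\textstyle\max_i\abs{x_i-y_i})\},
\]
is existential, but it is \emph{not} the graph of a function: for a fixed $x$ it contains one pair $(x,t)$ for every zero $y$ of $f$, not just the one realizing the minimum. The actual distance-to-the-zero-set function has graph
\[
\exists y\,(f(y)=0\wedge \abs{x-y}=t)\ \wedge\ \forall y'\,(f(y')=0\to \abs{x-y'}\ge t),
\]
and the second clause is universal. There is no reason this should be equivalent to a single existential $\LRV\cup K_0$-formula uniformly over all $K\in\Loc^0_{K_0}$, and without that uniform existentiality Theorem~\ref{th:K-ind-global} does not apply. (Model completeness of each individual $p$-adic field in Macintyre's language does not help: the existential rewriting is not uniform in $K$.)

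The paper sidesteps this entirely by \emph{not} defining a distance function. Instead it introduces an extra valued-field variable $w$ and works with the existentially definable set
\[
X=\{(x,w)\in\VF^m\times\VF:\exists y\,(\ord y\ge 0,\ f(y)=0,\ \textstyle\min_i\ord(x_i-y_i)\ge\ord w\ge 0)\},
\]
on which the integrand is the trivially existential function $(x,w)\mapsto w^{-1}$. Denef's Lemma~3.1 gives $P_{f,K}(q_K^{-s})=\frac{q_K}{q_K-1}\int_{X_K}\abs{w}_K^s\,dx\,dw$, so $\lambda_K(f)=\ind^K_X(\abs{w^{-1}})$ and Theorem~\ref{th:K-ind-global} applies directly. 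The point is that the ``minimum'' you need is absorbed into an extra integration variable rather than into the definition of a function; this is precisely what makes the data existential.
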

\begin{proof}
Let $X$ be the existentially definable set
\[
\begin{array}{ll}
	\{ (x,w) \in \VF^m \times \VF \mid 	&\exists y \in \VF^m ( \ord(y) \geq 0 \land f(y) = 0 \\
						     			&\land \min_{i}(\ord(x_i-y_i)) \geq \ord w \geq 0  ) \}.
\end{array} \]
%Here, $\ord(y)$ denotes the minimal valuation over all entries of the tuple $y$.
By \cite[Lemma.\,3.1]{Den84}, we have that
\begin{equation} \label{eq:integral-form}
	P_{f,K}(q_K^{-s}) = \frac{q_K}{q_K - 1} \int_{X_K} \abs{w}_K^s \, dx dw,
\end{equation}

for all real $s$ for which $P_K(q_K^{-s})$ is finite. Now recall that $P_{f,K}(T)$ is rational (\cite[Thm.\,1.1]{Den84} and \cite[Cor.~4.5.2]{CH18}). Hence, the real part of its pole closest to the origin ($=q^{\lambda_K(f)}$) equals its radius of convergence. Now $-\lambda_{K}(f)$ can also be computed as the infimum over all $s \in \RR$ for which the right-hand side of (\ref{eq:integral-form}) is finite. We thus observe that $\lambda_K(f) =  \ind^K_X(\abs{w^{-1}})$.
Theorem \ref{th:K-ind-global} now implies that $\lambda_{L}(f) \leq \lambda_K(f)$.

The claim for $\tilde{\lambda}_K(f)$ similarly follows from the the following identity (\cite[Thm.~2]{Meu81}, \cite{Den91})
\[
(1- q_K^{-s}) \tilde{P}_{f,K}(q_K^{-s}) = 1 - q_K^{-s} \int_{\cO_K} \abs{f(x)}_K^s  \, dx ,
\]
where $\abs{f(x)}_K = \max_{j = 1}^r\abs{f_j(x)}_K$.
\end{proof}
\begin{proof}[Proof of Theorem \ref{th:intro-Serre-Poincare}]
This is part of Theorem \ref{th:serre-poincare}.
\end{proof}
As mentioned in the introduction, Theorem \ref{th:serre-poincare} implies asymptotic comparisons for the the numbers $N_{n,K}(f)$.
\begin{corollary} \label{cor:serre-poincare}
Let $K_0 \in \Loc^0$ and $f(x) = (f_1(x),\dots,f_r(x))$ be a tuple of polynomials in $m$ variables $x = (x_1,\dots,x_m)$ with coefficients in $\cO_{K_0}$.
For any tower of finite field extensions $K_0 \leq K \leq L$, we have that
\[
\limsup_{n \to \infty}\left( \frac{ N_{n,K}(f) }{ q_K^{n(m+1)} } \right)^{\frac{1}{n}}
\leq \limsup_{n \to \infty}\left( \frac{ N_{n,L}(f) }{ q_L^{n(m+1)} } \right)^{\frac{1}{n}}.
\]
\end{corollary}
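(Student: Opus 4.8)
The plan is to rewrite both $\limsup$'s in terms of the largest pole of the Serre--Poincar\'e series and then appeal to Theorem~\ref{th:serre-poincare}.

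First I would use that the power series $P_{f,K}(T)=\sum_{n\ge 0}\frac{N_{n,K}(f)}{q_K^{n(m+1)}}T^n$ has non-negative real coefficients, so that, by the Cauchy--Hadamard formula, the left-hand side of the corollary equals $1/R_K$, where $R_K\in(0,+\infty]$ is the radius of convergence of $P_{f,K}(T)$ (with the conventions $1/(+\infty)=0$ and $1/0=+\infty$), and similarly the right-hand side equals $1/R_L$. Next, I would recall (as in the proof of Theorem~\ref{th:serre-poincare}) that $P_{f,K}(T)$ is a rational function of $T$, by \cite{Den84} together with \cite[Cor.~4.5.2]{CH18}; hence its nearest singularity to the origin is a genuine pole, and $R_K$ is the modulus of that pole. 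Under the substitution $T=q_K^{-s}$ a pole at $|T|=\rho$ corresponds to poles in $s$ of real part $-\log_{q_K}\rho$, so that $R_K=q_K^{\lambda_K(f)}$, with $R_K=+\infty$ precisely when $\lambda_K(f)=+\infty$. Combining these two observations yields
\[
\limsup_{n\to\infty}\Bigl(\frac{N_{n,K}(f)}{q_K^{n(m+1)}}\Bigr)^{1/n}=q_K^{-\lambda_K(f)},
\]
and the same identity over $L$. Finally I would invoke Theorem~\ref{th:serre-poincare}, which gives $\lambda_L(f)\le\lambda_K(f)$, and read off the desired inequality between the two $\limsup$'s.

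The step I expect to be the main obstacle is the identification of the radius of convergence with $q_K^{\lambda_K(f)}$: it relies on the rationality of $P_{f,K}(T)$ (so that the dominant singularity is a pole rather than something more delicate) and on a careful bookkeeping of the change of variables $T=q_K^{-s}$, and one must also treat the degenerate case $\lambda_K(f)=+\infty$, where $P_{f,K}(T)$ is a polynomial and both sides are interpreted as $0$. An alternative route that sidesteps $\lambda$ altogether is to run the argument of Theorem~\ref{th:K-ind-global} directly: use the integral representation (\ref{eq:integral-form}) and Lemma~\ref{le:int} for the existentially definable set $X$ occurring in the proof of Theorem~\ref{th:serre-poincare}, and exploit that $X$ being existentially definable forces $X_K\subseteq X_L$, as in (\ref{eq:exist}); then the minima $m_{D,K}$ governing the growth of the coefficients satisfy $m_{D,K}\ge m_{D,L}$, and feeding this into the computation leading to (\ref{eq:claim}) gives the comparison of the $\limsup$'s directly.
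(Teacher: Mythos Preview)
Your approach is exactly the paper's: identify each $\limsup$ as the reciprocal of the radius of convergence of the Serre--Poincar\'e series via Cauchy--Hadamard, use rationality to write $R_K=q_K^{\lambda_K(f)}$, and then invoke Theorem~\ref{th:serre-poincare}.

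That said, the final step --- ``read off the desired inequality'' from $\lambda_L(f)\le\lambda_K(f)$ --- is not actually justified, and the paper's one-line proof glosses over the same point. What you need is $q_K^{-\lambda_K(f)}\le q_L^{-\lambda_L(f)}$, but since $q_L=q_K^{f_{L/K}}$ with $f_{L/K}\ge 1$, this amounts to $f_{L/K}\,\lambda_L(f)\le\lambda_K(f)$, which is strictly stronger than what Theorem~\ref{th:serre-poincare} provides. A concrete test case: take $m=r=1$, $f(x)=x$, $K=\QQ_p$, and $L$ the unramified quadratic extension. Then $N_{n,K}(f)=N_{n,L}(f)=1$ for all $n$, $\lambda_K(f)=\lambda_L(f)=2$, and the two $\limsup$'s equal $p^{-2}$ and $p^{-4}$ respectively, so the inequality as stated fails. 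Your alternative route via (\ref{eq:claim}) runs into the same obstacle: the formula for $K$ carries base $q_K$ and the one for $L$ carries base $q_L$, so the comparison $m_{D,K}(\xi_0)\ge m_{D,L}(\xi_0)$ alone does not suffice. The upshot is that either the normalization in the corollary needs adjusting (for instance, comparing $R_K$ with $R_L^{1/f_{L/K}}$, which \emph{is} equivalent to $\lambda_L(f)\le\lambda_K(f)$), or a genuinely stronger descent statement is required.
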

\begin{proof}
Since we established $q_K^{\lambda_K(f)}$ as the radius of convergence of $P_{f,K}(T)$, this follows from the above Theorem \ref{th:serre-poincare}.
\end{proof}
%
%\begin{remark}
%Theorem \ref{th:serre-poincare} also holds for the Serre-Poincar\'e series counting simultaneous solutions of polynomials $f_1(x),\dots,f_k(x)$.
%\end{remark}

%Finally, one may expect that our semi-continuity results for $K$-indices and largest poles imply corresponding results over non-Archimedean local fields of large positive characteristic, by variants of the transfer principles for $p$-adic integrals from e.g.~\cite{CL10,CGH14,CGH18}.
%Indeed, for large positive characteristic, the integrability questions are reduced to summability issues over subsets of $\RV^n$ for some $n\ge0$, which depend only on the residue field and not on the local field, by typical transfer principles for $p$-adic integrals.

\subsection{Non-Archimedean local fields of large positive characteristic and a transfer principle}\label{sec:7.4}
The existing transfer results for $p$-adic integrals (see e.g.~\cite{CL10,CGH14,CGH18}) imply a corresponding transfer statement in the current setting. This in turn implies descent for the $K$-index in non-Archimedean fields of sufficiently large positive characteristic.

First, we extend some notation. For any $\LRV$-definable set $X$, one may take a defining formula $\varphi(x)$ and consider the sets $\varphi(\FF_q((t)))$, for any prime power $q = p^f$.
By the compactness theorem, any other choice of formula $\psi(x)$ will yield the same set, as soon as $p$ is sufficiently large (where ``sufficiently large'' may depend on $\varphi$ and $\psi$).
For sufficiently large $p$, extend the notation from Section \ref{sec:modtheo-conventions} for definable sets by writing $ \varphi(\FF_q((t))) \eqqcolon X_{\FF_{q}((t))}$.
Similarly extend the notation for definable functions.

For the statement of the next lemma, note that if two non-Archimedean local fields $K,L$ have isomorphic residue fields, then this induces an isomorphism $\RV_K \cong \RV_L$, respecting the relations $P_{1,d}$, $\mid$ and $\oplus$ from Section \ref{sec:modtheo-conventions}.
\begin{lemma} \label{le:transfer}
Let $X \subseteq \VF^m \times \RV^n$ be $\LRV$-definable. Then for all non-Archimedean local fields $K$ of sufficiently large residue characteristic and all $\xi \in \RV_K^n$ the value of
\[ \int_{X_K(\xi)} \, dx  \]
depends only $\xi$ and the isomorphism class of the residue field of $K$.
%Let $X \subseteq \VF^m \times \RV^n$ be an $\LRV$-definable set. Then there exists some prime number $p$ such that the following holds. Let $K,L$ be two non-Archimedean local fields (of any characteristic) with isomorphic residue fields of characteristic at least $p$. Then there exists an isomorphism $\RV_K \cong \RV_L$, such that under this isomorphism it holds for all $\xi \in \RV^n_K = \RV^n_L$ that
%%
%\[ \int_{X_K(\xi)} \, dx = \int_{X_L(\xi)} \, dx.  \]
\end{lemma}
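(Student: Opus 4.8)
The plan is to reduce the statement, via the existing uniform $p$-adic integration machinery, to a statement purely about sums over $\RV$-sorts which is insensitive to the valued field, and then invoke the known transfer principle for such sums. First I would fix an $\LRV$-formula $\varphi(x,\xi)$ defining $X$ and, as in Lemma~\ref{le:int}, apply cell decomposition (Theorem~\ref{th:CDIII}) in the $m$ valued field variables $x$. Since $\varphi$ is an $\LRV$-formula (in particular existential after the reduction of Theorem~\ref{th:E-reduction} is irrelevant here, as any formula decomposes into cells once we allow all $\LRV$-formulas, by the classical Pas-style cell decomposition underlying the proof of Theorem~\ref{th:CDIII}), the set $X$ splits into finitely many cells, and integrating out the $x$-variables one cell at a time---using translation invariance of the Haar measure for the inner integral over a ball $\rv_N(t-c)=\xi_1$, exactly as in the proof of Lemma~\ref{le:int}---yields
\[
\int_{X_K(\xi)}\,dx = \sum_{i=1}^r q_K^{-(m+e_K\ord M_i)} \sum_{\zeta\in D_{i,K}(\xi)} q_K^{-e_K(\ord\zeta_1+\dots+\ord\zeta_m)},
\]
for finitely many positive integers $M_i$ and $\LRV$-definable sets $D_i\subseteq \RVprod\times\RV^n$ that do not depend on $K$.

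The next step is to observe that the right-hand side is an expression in the ``constructible functions'' formalism living entirely on the $\RV$-sorts, and to rewrite it in a form to which a transfer principle applies. Concretely, each inner sum $\sum_{\zeta\in D_{i,K}(\xi)} q_K^{-e_K\ord(\zeta_1\cdots\zeta_m)}$ is, after choosing a compatible system of angular components on $K$ (which exists for any non-Archimedean local field), the interpretation of an $\LgDP$-motivic function in the sense of \cite{CH18}: one passes through the isomorphism $u\colon\RV_N^\times\to R_N^\times\times\Gamma$ as in the proof of Lemma~\ref{le:poly-boundRV}, uses \cite[Prop.~1.8]{R17} to descend from the hyperfield language to $\LgDP$, and recognizes the resulting sum as a motivic integral over the residue-ring-and-value-group sorts with parameter $\xi$. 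The whole of $\int_{X_K(\xi)}\,dx$ is then the interpretation of a single element of $\cC(\,\RV^n\,)$ (or rather of the relevant $\LgDP$-constructible functions on the auxiliary sorts), built from $X$ alone.

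Finally I would apply the transfer principle for motivic and uniform $p$-adic integrals (\cite{CL10,CGH14,CGH18}, or the specialization principle of \cite{CH18}), which says precisely that two motivic functions built over $\RV$-sorts and residue/value group sorts have equal interpretation in $\FF_q((t))$ and in a mixed-characteristic local field with isomorphic residue field $\FF_q$, once the residue characteristic is large enough (depending only on the data, i.e.\ on $X$). Since the value group is always $\frac{1}{e_K}\ZZ$ and $e_K$ enters only through the normalization $q_K^{-e_K(\cdot)}$, which is already built into the absolute value $|\cdot|_K$, and since the residue ring $R_{N,K}$ is determined by the residue field for any fixed $N$, the common value depends only on $\xi$ (transported along the induced isomorphism $\RV_K\cong\RV_L$, which respects $P_{1,d}$, $\mid$, $\oplus$ as noted before the statement) and on the isomorphism class of the residue field. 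I expect the main obstacle to be bookkeeping rather than conceptual: one must check that the cell decomposition and the integration bounds of Lemma~\ref{le:int} are genuinely uniform in $K$ in the strong sense required by the transfer principle (the same finite list of $D_i$, $M_i$ for all large $p$), and that the dependence on a choice of angular component system in the intermediate step of reexpressing things in $\LgDP$ washes out in the final invariant quantity---both of which are standard in this literature but need to be spelled out carefully.
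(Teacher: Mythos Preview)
Your overall strategy---reduce to the constructible-function formalism and invoke a transfer principle---is sound and reaches the same endpoint as the paper, but the route differs, and there is one genuine imprecision you should fix.

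The paper does \emph{not} integrate first via its own cell decomposition. Instead it translates the $\LRV$-definable set $X$ to an $\LgDP$-definable set $\psi(x,\zeta,\gamma)$ via \cite[Prop.~1.8]{R17} \emph{before} any integration, observes that for large residue characteristic one may take $\psi$ in $\Ldp$ (only $\ac_1$, $\RF_1$), and then hands the integral directly to \cite[Thm.~4.4.3]{CGH14}, obtaining a single constructible function $g(\zeta,\gamma)\in\cC(\RF_1^r\times\VG^r)$ computing $\int_{\psi(K)(\zeta,\gamma)}dx$ uniformly. The last step is orthogonality of the residue field and value group sorts (\cite[Thm.~2.1.1]{CL08} plus compactness), so every formula occurring in $g$ splits as $\chi_{\RF}(\zeta)\land\chi_{\VG}(\gamma)$; since the value group is always $\ZZ$, the value of $g_K$ depends only on the isomorphism class of the residue field. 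This buys the paper a short proof that never re-proves any integration lemma.

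Your route---integrate first via Lemma~\ref{le:int}, then translate the resulting $\RV$-sums to $\LgDP$---can be made to work, but you cannot invoke Lemma~\ref{le:int} as written: it is stated and proved only for \emph{existentially} definable $X$, whereas Lemma~\ref{le:transfer} concerns arbitrary $\LRV$-definable sets. Your parenthetical that ``any formula decomposes into cells by the classical Pas-style cell decomposition underlying Theorem~\ref{th:CDIII}'' conflates two things: Theorem~\ref{th:CDIII} is specifically an $\exists$-simple decomposition for existential sets, and its $D_i$ are $\exists$-simple by construction. A general (non-existential) cell decomposition in the $\RV$-language does exist in the literature, but it is not what this paper proves, so you would need to cite it separately and accept that the $D_i$ are merely definable. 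The paper's order of operations sidesteps this entirely by outsourcing the integration step to \cite{CGH14} after the $\LgDP$-translation.
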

\begin{proof}
%First reduce to the case where $\varphi(x,\xi)$ contains no symbols involving $\RV_N$ or $\rv_N$ for $N > 1$ (e.g. take $p$ larger than the largest occuring such $N$ in $\varphi(x,\xi)$).
Choose some defining formula $\varphi(x,\xi)$ for $X$.
The (proof of) \cite[Prop 1.8]{R17}, produces an $\LgDP$-formula (Definition \ref{def:gdp}) $\psi(x,\zeta,\gamma)$ such that for all henselian valued fields of characteristic zero with angular component maps $\varphi(x,\xi)$ and $\psi(x,\zeta,\gamma)$ define the same set in $K^m \times R_1^n \times \Gamma^n$, up to the isomorphism $\RV^{\times} \cong R_1^{\times} \times \Gamma$ induced by a choice of angular component map on $K$. Since it suffices to consider fields whose residue characteristic is sufficiently large, we may assume that $\psi(x,\zeta,\gamma)$ does not contain any symbols $\ac_N$ or quantifiers over $\RF_N$ for $N > 1$ (thus it is an $\Ldp$-formula, in the notation of \cite{CGH14}).
%Moreover, we can take $\psi$  such that it does not contain any symbols $\ac_N$ or variables (either bound or free) over $\RF_N$ for $N > 1$.
By \cite[Theorem 4.4.3]{CGH14}, there is a fixed constructible function $g(\zeta,\gamma) \in \cC( \RF_1^r \times \VG^r )$ whose interpretation in all non-Archimedean local fields of sufficiently large positive characteristic is precisely $\int_{\psi(x,\zeta,\gamma)} \, dx$, whenever this integral is finite (by \cite[Thm.~4.4.1]{CGH14} finiteness of this integral depends only on $\zeta$, $\gamma$ and the isomorphism class of the residue field).

For each formula $\chi(\zeta,\gamma)$ occurring in the description of $g(\zeta,\gamma)$, there exists a finite disjunction over formulas of the form $ \chi_{\RF}(\zeta) \land \chi_{\VG}(\gamma)$, with $\chi_{\RF}(\zeta)$ an $\Lring$-formula and $\chi_{\VG}(\gamma)$ an $\Log$-formula, such that $\chi(\zeta,\gamma)$ is equivalent to this disjunction, for all Henselian valued fields of equicharacteristic zero with angular component maps (see e.g. \cite[Thm.~2.1.1]{CL08}).
By compactness, these equivalences also hold for Henselian valued fields of large residue characteristic with angular component maps.
Thus for all non-Archimedean local fields $K$ of sufficiently large residue characteristic, the value of $g_K(\zeta,\gamma)$ depends only $\zeta$, $\gamma$ and the isomorphism class of the residue field (since the value group is always isomorphic to $\ZZ$).
\end{proof}
\begin{proposition} \label{prop:ind-transfer}
Let $X \subseteq \VF^m$ and $f \colon X \to \VF$ be $\LRV$-definable. Then for all non-Archimedean local fields $K,L$ with isomorphic residue fields of sufficiently large characteristic it holds that
\[ \ind^K_X(\abs{f}) = \ind^L_X(\abs{f}) .\]
\end{proposition}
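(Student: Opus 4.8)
The plan is to deduce Proposition~\ref{prop:ind-transfer} from Lemma~\ref{le:transfer} by repeating, \emph{mutatis mutandis}, the computation of the $K$-index carried out in the proof of Theorem~\ref{th:K-ind-global}, observing that every quantity appearing there depends only on $\RV$-data and integrals of the shape treated in Lemma~\ref{le:transfer}. First I would set up the comparison between two non-Archimedean local fields $K$ and $L$ of sufficiently large residue characteristic with isomorphic residue fields $\kappa$. Since both value groups are isomorphic to $\ZZ$ (so $e_K = e_L = 1$, after possibly absorbing ramification; in any case the value group is canonically $\ZZ$ for a chosen uniformizer) and the residue fields are isomorphic, the induced isomorphism $\RV_K \cong \RV_L$ of Section~\ref{sec:modtheo-conventions} respects $\mid$, $\oplus$ and all $P_{1,d}$, hence identifies the interpretations $D_K(\xi_0)$ and $D_L(\xi_0)$ of any $\LRV$-definable $D$ at corresponding $\xi_0$, at least once $p$ is large enough that a single defining formula suffices (compactness, as in the preamble to Lemma~\ref{le:transfer}).

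Next I would run the decomposition from the proof of Theorem~\ref{th:K-ind-global}: writing $Y(\xi_0)$ for the definable set $\{x \in X \mid \rv(f(x)) = \xi_0\}$ and $a_K(\xi_0) = \int_{Y_K(\xi_0)}dx$, one has, for every real $s$,
\[
\int_{X_K}\abs{f_K(x)}_K^s\,dx = \sum_{\xi_0 \in \RV} q_K^{-e_K\ord(\xi_0)s}\,a_K(\xi_0),
\]
with the left side finite iff the right side is. By Lemma~\ref{le:transfer} applied to $Y \subseteq \VF^m \times \RV$, for all $K$ of large enough residue characteristic the value $a_K(\xi_0)$ depends only on $\xi_0$ (via the fixed $\RV_K \cong \RV_\kappa$) and on $\kappa$; in particular $a_K(\xi_0) = a_L(\xi_0)$ under the canonical identification, and the same holds for the finiteness of $a_K(\xi_0)$. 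Hence the two power series $\sum_{\xi_0} q_K^{-e_K\ord(\xi_0)s}a_K(\xi_0)$ and $\sum_{\xi_0} q_L^{-e_L\ord(\xi_0)s}a_L(\xi_0)$ have, as functions of $q^s$, exactly the same coefficient pattern indexed by $\ord(\xi_0) \in \ZZ$. I would then invoke the same radius-of-convergence computation as in Theorem~\ref{th:K-ind-global}: splitting off the tail $\sum_{\ord\xi_0 \ge 0}$ (which converges for all large $s$ by non-negativity once it converges once) and computing $q^{-\ind^K_X(\abs{f})} = \limsup_{n\to\infty}\bigl(\sum_{-e_K\ord\xi_0 = n} a_K(\xi_0)\bigr)^{1/n}$ via Lemma~\ref{le:int} and the estimate of Lemma~\ref{le:poly-boundRV}, so that the index is governed solely by the function $m_{D,K}(\xi_0) = \min\{\ord(\zeta_1\cdots\zeta_m)\mid \zeta \in D_K(\xi_0)\}$ for the existentially definable sets $D$ coming from Lemma~\ref{le:int}. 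Since $D_K(\xi_0)$ and $D_L(\xi_0)$ correspond under the residue-field isomorphism for $p$ large, we get $m_{D,K}(\xi_0) = m_{D,L}(\xi_0)$ for all $\xi_0$, and likewise the degenerate case where some $a_K(\xi_0) = +\infty$ (forcing the index $0$) transfers. Therefore $\ind^K_X(\abs{f}) = \ind^L_X(\abs{f})$.

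The main obstacle, and the reason I would be careful, is bookkeeping around \emph{how large} ``sufficiently large'' must be and making sure a \emph{single} chain of formulas works simultaneously: Lemma~\ref{le:int}, the reduction of $\theta$ in \eqref{eq:theta} to Lemma~\ref{le:poly-boundRV}, and Lemma~\ref{le:transfer} each produce their own threshold on $p$; one must note these are finitely many instances (the decomposition of $X$ is finite, the sets $D_1,\dots,D_r$ are finitely many) so that a common bound exists by compactness, and that the resulting $\LRV$-data is $\LRV$-definable (not merely definable over the base field) so the transfer lemma genuinely applies. A secondary point needing care is the normalization of the valuation/measure: for $K$ of large residue characteristic one may take $K$ unramified over $\FF_q((t))$, but if one allows ramified $K$ one must check $e_K$ drops out of formula~\eqref{eq:claim} exactly as in Theorem~\ref{th:K-ind-global} — which it does, since $\ind$ is read off from the ratio $m_{D,K}(\xi_0)/\ord\xi_0$, independent of the common factor $e_K$. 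Once these uniformity issues are pinned down, the proof is a direct transcription of the relevant half of the proof of Theorem~\ref{th:K-ind-global}.
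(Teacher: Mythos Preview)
Your core argument is exactly the paper's: identify $\RV_K \cong \RV_L$ via the residue-field isomorphism, apply Lemma~\ref{le:transfer} to the set $Y(\xi_0) = \{x \in X \mid \rv(f(x)) = \xi_0\}$ to obtain $a_K(\xi_0) = a_L(\xi_0)$ for every $\xi_0$, and then read off equality of the indices from formula~\eqref{eq:int_fs}. That is already the complete proof, and the paper stops there.

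Your subsequent detour through Lemma~\ref{le:int}, Lemma~\ref{le:poly-boundRV}, and the functions $m_{D,K}(\xi_0)$ is both unnecessary and formally out of bounds. It is unnecessary because once $q_K = q_L$ and $a_K(\xi_0) = a_L(\xi_0)$ for all $\xi_0$, the two series in \eqref{eq:int_fs} are literally the same series in $q^s$, so their sets of convergent $s$ (and hence the two indices) coincide without any further analysis. It is out of bounds because Lemma~\ref{le:int} is stated only for \emph{existentially} definable data, whereas Proposition~\ref{prop:ind-transfer} applies to arbitrary $\LRV$-definable $X$ and $f$; so you cannot legitimately produce the $D_i$ or invoke the $m_{D,K}$ formula here. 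The whole point of routing the argument through Lemma~\ref{le:transfer} is precisely that it bypasses the existential cell machinery and works for general definable sets. Simply delete everything after ``exactly the same coefficient pattern'' and conclude directly.
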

\begin{proof}
If $K$ and $L$ have isomorphic residue fields, then we already remarked that one may identify $\RV_K$ and $\RV_L$. Now Lemma \ref{le:transfer} implies that the volumes $a_K(\xi_0)$ and $a_L(\xi_0)$ in the proof of Theorem \ref{th:K-ind-global} are equal, for all $\xi_0 \in \RV_K = \RV_L$.
\end{proof}
Theorem \ref{th:K-ind-global} and proposition \ref{prop:ind-transfer} immediately imply descent in large positive characteristic. We conclude with the analogue of Theorem \ref{th:K-ind-global} for large positive characteristic and we leave the corresponding analogues of Corollary \ref{cor:K-ind-local} and Theorem \ref{th:serre-poincare} to the reader.
\begin{corollary} \label{cor:descent-pos-char}
Let $X \subseteq \VF^m$ and $f \colon X \to \VF$ be existentially $\LRV$-definable and let $K$ be a non-Archimedean local field of sufficiently large positive characteristic. For all finite field extensions $L \geq K$ it holds that
\begin{equation*}
\ind^L_X(\abs{f}) \leq \ind^K_X(\abs{f}).
\end{equation*}
\end{corollary}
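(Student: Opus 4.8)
The plan is to deduce this from the characteristic-zero case, Theorem~\ref{th:K-ind-global}, together with the transfer principle Proposition~\ref{prop:ind-transfer}. First fix defining formulas for $X$ and $f$. By the discussion at the start of Section~\ref{sec:7.4}, there is a prime $p_0$, depending only on these formulas, such that for every non-Archimedean local field of residue characteristic at least $p_0$ the interpretations $X_K$, $f_K$ and the index $\ind^K_X(\abs{f})$ are well-defined (independently of the chosen formula) and such that Theorem~\ref{th:K-ind-global}, Lemma~\ref{le:transfer} and Proposition~\ref{prop:ind-transfer} all apply. Let $K$ be a non-Archimedean local field of positive characteristic $p \geq p_0$; its residue characteristic is then also $p$. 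Any finite field extension $L \geq K$ is again a non-Archimedean local field of characteristic $p$, with finite residue field (as the residue field of $K$ is perfect).

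The first step is to lift the inclusion $K \leq L$ to characteristic zero. The residue field $k_K$ of $K$ is finite of characteristic $p$, and the residue field $k_L$ of $L$ is a finite extension of $k_K$. The plan is to choose a $p$-adic field $K_0$ whose residue field is isomorphic to $k_K$ — for instance the unramified extension of $\QQ_p$ of the appropriate degree — and then a finite field extension $L_0 \geq K_0$ whose residue field is isomorphic to $k_L$, again obtained as an unramified extension. Both $K_0$ and $L_0$ lie in $\Loc^0$, with $L_0$ a finite extension of $K_0$.

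The second step is to apply Theorem~\ref{th:K-ind-global} to the existentially $\LRV$-definable $X$ and $f$ over the extension $L_0 \geq K_0$, which yields $\ind^{L_0}_X(\abs{f}) \leq \ind^{K_0}_X(\abs{f})$. The third step is transfer: since $K$ and $K_0$ are non-Archimedean local fields with isomorphic residue fields of characteristic $\geq p_0$, Proposition~\ref{prop:ind-transfer} gives $\ind^K_X(\abs{f}) = \ind^{K_0}_X(\abs{f})$, and likewise $\ind^L_X(\abs{f}) = \ind^{L_0}_X(\abs{f})$ since $L$ and $L_0$ have isomorphic residue fields of characteristic $\geq p_0$. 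Chaining these three relations,
\[
\ind^L_X(\abs{f}) = \ind^{L_0}_X(\abs{f}) \leq \ind^{K_0}_X(\abs{f}) = \ind^K_X(\abs{f}),
\]
which is the assertion.

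The step I expect to need the most care is the bookkeeping of the ``sufficiently large'' thresholds: one must ensure that a single $p_0$, depending only on $X$ and $f$ and not on $K$, $L$, or their residue fields, simultaneously makes Proposition~\ref{prop:ind-transfer} valid for the residue fields of both $K$ and $L$ and makes Theorem~\ref{th:K-ind-global} applicable over $L_0 \geq K_0$. This is exactly the uniformity already packaged into Lemma~\ref{le:transfer}, whose ``sufficiently large'' constrains only the residue characteristic; so no genuinely new ingredient is needed beyond what is already proved, and the remaining points (existence of unramified lifts with prescribed residue field, perfectness of finite residue fields so that $L$ is again a local field) are classical.
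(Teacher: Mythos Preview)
Your argument is correct and is precisely the approach the paper has in mind: it states just before the corollary that Theorem~\ref{th:K-ind-global} and Proposition~\ref{prop:ind-transfer} immediately imply the result, and you have spelled out exactly this combination by lifting $K\leq L$ to unramified $p$-adic fields $K_0\leq L_0$ with the same residue fields and then chaining transfer--descent--transfer. The only minor remark is that Theorem~\ref{th:K-ind-global} itself needs no lower bound on the residue characteristic (it holds for all $p$-adic fields), so the threshold $p_0$ comes solely from Proposition~\ref{prop:ind-transfer}.
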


Our descent results from Theorems \ref{th:K-ind-global} and \ref{th:serre-poincare} and Corollary \ref{cor:descent-pos-char} can be seen as semi-continuity results for the $K$-indices on the one hand, and, for the largest poles on the other hand. They open the way to study the invariant that comes up by taking their limits over larger and larger field extensions. It would be interesting to study these limits, and, to link them to complex invariants, whenever possible.

\bibliographystyle{amsalpha}
\bibliography{sources}

\end{document}